\newcommand*\bigcdot{\mathpalette\bigcdot@{.5}}
\newcommand*\bigcdot@[2]{\mathbin{\vcenter{\hbox{\scalebox{#2}{$\m@th#1\bullet$}}}}}
\numberwithin{equation}{section}
\theoremstyle{plain}
\newtheorem{prop}{Proposition}[section]
\newtheorem{thm}[prop]{Theorem}
\newtheorem{cor}[prop]{Corollary}
\newtheorem{lem}[prop]{Lemma}
\newtheorem{defn}[prop]{Definition}
\theoremstyle{definition}
\newtheorem{exam}[prop]{Example}
\newtheorem{rem}[prop]{Remark}
\newtheorem{assump}[prop]{Assumption}
\def\End{{{\rm End}\,}}
\def\Hom{{{\rm Hom}\,}}
\def\add{{{\rm add}\,}}
\def\soc{{{\rm soc}\,}}
\def\top{{{\rm top}\,}}
\def\Ext{{{\rm Ext}\,}}
\def\D{{{\rm D}\,}}
\def\mod{{\text{\rm mod}}}
\def\Mod{{\text{\rm Mod}}}
\def\EG{{{\rm EG}}}
\def\Stab{{\rm Stab}}
\def\thick{{\rm thick}}
\def\tube{{\rm tube}}
\def\add{{\rm add}}
\def\Heartst3{{\rm Hearts \  _{\mathcal{S}_{\textit{i}}}\mathcal{H}_{\mathcal{U}_{\textit{i}}} \ of \ the \  bounded \ t-structure \ (\mathcal{D}^{\leq 0}, \mathcal{D}^{\geq 0})  \ on }}
\def\Dim{{ \rm dim }}
\def\Stab{{\rm Stab}}
\def\Hom{{\rm Hom}}
\def\Mod{{\rm mod}}
\def\End{{\rm End}}
\def\Ext{{\rm Ext}}
\def\Stab{{\rm Stab}}
\begin{document}
\title[Exchange graphs and Ext-quivers of hearts  of tube categories]{Exchange graphs and Ext-quivers of hearts  of tube categories}

\author[Mingfa Chen] {Mingfa Chen}

\address{Mingfa Chen;\; School of Mathematical Sciences, Xiamen University, Xiamen, 361005, Fujian, PR China.}
\email{mingfachen@stu.xmu.edu.cn}

\date{\today}
\makeatletter \@namedef{subjclassname@2020}{\textup{2020} Mathematics Subject Classification} \makeatother
\subjclass[2010]{18E40, 18E10, 18G80, 16G20, 16G70, 16E35, 14F45.}

\keywords{Exchange graph, Ext-quiver, tube category, heart,  Bridgeland stability condition}
\maketitle

\begin{abstract}
In this paper we introduce the notion of pre-simple-minded collection (pre-SMC) of type $\mathbb{A}$ in the bounded derived categories $\mathcal{D}^{b} (${\rm{\textbf{T}}}$_p)$ of tube categories $\textbf{T}_{p}$ of rank $p$. This provides an effective approach to  classify the hearts in $\mathcal{D}^{b} (${\rm{\textbf{T}}}$_p)$.
We then use this classification
to prove the exchange graph of hearts in $\mathcal{D}^{b} (${\rm{\textbf{T}}}$_p)$ is connnected.
Further, we classify the Ext-quivers of hearts in $\mathcal{D}^{b} (${\rm{\textbf{T}}}$_p)$.
As an application, we show that the space of  Bridgeland stability conditions on $\mathcal{D}^{b}(${\rm{\textbf{T}}}$_p)$ is connected.
\end{abstract}

\date{}
\maketitle

\medskip

\section{Introduction}

The notion of hearts of t-structures were introduced by Beilinson-Bernstein-Deligne \cite{bbd}, which become increasingly important in a wide range of research areas.
Bridgeland \cite{bris} proved a stability function with the Harder-Narashimhan  property on its heart
equivalently gives  a stability condition, and the space of stability conditions with its natural topology forms manifold.
For a triangulated category $\mathcal{D}$ and an exact endofunctor $\Phi$, Kim \cite{kim} introduced new entropy-type invariants using finite (co-)hearts of bounded (co-)t-structures to  describe the
categorical entropies of $\Phi\lvert_{\mathcal{A}}$ and $\Phi\lvert_{\mathcal{B}}$ for an ST-triple $(\mathcal{B},\mathcal{A},M)$.
Saor\'{i}n-$\rm{\check{S}}$t$\acute{}$ov\'{i}$\rm{\check{c}}$ek-Virili \cite{grot} proved any  compactly generated t-structure has Grothendieck hearts via strong and stable derivator for  well-generated algebraic or topological triangulated categories.
Research highlights about hearts are also studied in \cite{simp,geom,twis,
noet,sast,
tstr,wool}.

It turns out that both the study of individual hearts and the study of the whole collection of hearts provide us useful information on the category.
For a finite dimensional algebra $\Lambda$, K$\rm{\ddot{o}}$nig and Yang \cite{kysi} established
bijections among  finite length hearts, simple$-$minded collections in  $\mathcal{D}^{b}(\mod\Lambda)$, silting
objects in $\mathcal{K}^{b}(\rm{proj}\Lambda)$, and co$-$t$-$structures on $\mathcal{K}^{b}(\mathrm{proj}\Lambda)$.
Bachmann-Kong-Wang-Xu \cite{chow} described the Chow heart
as the category of even graded MU$_{2*}$MU-comodules for the Chow t-structure defined on the $\infty$-category of motivic spectra
over an arbitrary base field.
On the other hand, a great many research topics concern about  nilpotent representation categories of cyclic quivers (i.e.,  tube categories),  including but not limited to Hall algebras \cite{hall,para}, Cluster theories \cite{clusc}, GKM-theories \cite{gkm} and Quantum groups \cite{quan}.
It seems a lack of detailed description of hearts in the literature for the derived categories of tube categories.
Our motivation is to give a classification of hearts for such categories.

The exchange graph  of hearts in a triangulated category $\mathcal{D}$  encodes the
combinatorics of mutations of hearts in $\mathcal{D}$, which was
adapted by a great deal of mathematicians to different branches of mathematics \cite{brus,clus}.
For an acyclic quiver $Q$ and the Calabi-Yau-N
Ginzburg algebra $\Gamma_{N}Q$,
King and Qiu \cite{kingq} showed any heart in  the principal component of the oriented exchange graph is induced from some heart in  $\mathcal{D}^{b}(Q)$ via `Lagrangian immersion' $\mathcal{L}: \mathcal{D}^{b}(Q) \rightarrow \mathcal{D}^{b}(\Gamma_{N} Q)$.

The Ext-quivers  of  hearts  are of great importance in representation theories and cluster theories.
Assem and Happel \cite{ahge} gave a classification of repeatedly tilted algebras of $\mathbb{A}$-type in terms of quivers of gentle trees.
Qiu \cite{qext} generalized their result to classify the Ext-quivers of hearts of $\mathbb{A}$-type in terms of graded gentle trees,which provides the classification for Buan-Thomas' colored quivers for higher clusters of $\mathbb{A}$-type.

Exchange graphs and Ext-quivers  of hearts have played  key roles in the researches of the space of Bridgeland stability conditions \cite{bris}.
King and Qiu \cite{kingqi} introduced the cluster exchange groupoid associated to a nondegenerate
quiver with potential, and proved the space of stability conditions for the 3-Calabi-Yau category associated to the marked surface is simply connected.
Qiu \cite{qsta} proved the space of stability
conditions of any Dynkin quiver $Q$
is simply connected via the exchange graph and the Donaldson-Thomas type invariant associated to $Q$ can be calculated as a quantum dilogarithm function on its exchange graph.

In the present paper we study the exchange graphs and Ext-quivers of hearts in the bounded derived categories $\mathcal{D}^{b} (${\rm{\textbf{T}}}$_p)$ of tube categories $\textbf{T}_{p}$ of rank $p$.
We introduce the notion of pre-simple-minded collection of type $\mathbb{A}$ in $\mathcal{D}^{b} (${\rm{\textbf{T}}}$_p)$. We first provide a  classification of hearts in $\mathcal{D}^{b} (${\rm{\textbf{T}}}$_p)$ via this notion.
We investigate the mutations of simple-minded collections (SMCs) in $\mathcal{D}^{b} (${\rm{\textbf{T}}}$_p)$. We then use the classification of hearts to prove the exchange graph  for any tube category $\textbf{T}_{p}$ is connected.
Moreover we  introduce the notion of graded gentle one-cycle quiver.
We show  the Ext-quivers of  SMCs in $\mathcal{D}^{b} (${\rm{\textbf{T}}}$_p)$ are precisely the associated quivers of graded gentle one-cycle quivers with $p$ vertices, which is a generalization of graded cyclic quiver of simples in $\textbf{T}_p$.

The paper is organized as follows.
In Section 2 we collect the definitions  and properties of hearts and simple-minded collections in triangulated categories.
In  Section 3 we introduce the notion of pre-simple-minded collection of type $\mathbb{A}$ in $\mathcal{D}^{b} (${\rm{\textbf{T}}}$_p)$. We then classify all hearts in $\mathcal{D}^{b} (${\rm{\textbf{T}}}$_p)$ via this notation.
In Section 4 we investivate the connectedness property of  the exchange graph $\EG(\textbf{T}_p)$ for tube category $\textbf{T}_{p}$. Moreover we show the space of Bridgeland stability conditions on $\mathcal{D}^{b}(${\rm{\textbf{T}}}$_p)$ is connected via $\EG(\textbf{T}_p)$.
In  Section 5 we  introduce the notion of graded gentle one-cycle quiver, and describe all the Ext-quivers of  SMCs in $\mathcal{D}^{b} (${\rm{\textbf{T}}}$_p)$ via the graded gentle one-cycle quivers with $p$ vertices.

\emph{Notation.} Throughout this paper, let $\textbf{k}$ be an algebraically closed field, let $\mathcal{C}$ be an additive category.
All subcategories are assumed to be full and closed under isomorphisms.
If further $\mathcal{C}$ is abelian or triangulated and $\mathcal{S}$ is a set of objects in $\mathcal{C}$,  we write $\langle \mathcal{S} \rangle_{\mathcal{C}}$ for \emph{extension closure} of $\mathcal{S}$ which is the subcategory of $\mathcal{C}$ generated by the objects in $\mathcal{S}$ closed \emph{under extensions and direct summands},
and denote by $\add~ \mathcal{S}$ the subcategory of $\mathcal{C}$ consisting of direct summands of finite direct sums of objects in $\mathcal{S}$.
Assume  $\mathcal{C}$ is abelian, the \emph{right perpendicular category} $\mathcal{S}^{\bot_{\mathcal{C}}}$ of $\mathcal{S}$ in the sense of \cite{glri} is
$$\mathcal{S}^{\bot_{\mathcal{C}}}=\{X\in\mathcal{C}\mid\Hom(S, X)=0=\Ext^{1}(S, X)\ {\rm{for}}\ {\rm{all}}\ S\in\mathcal{S}\}.$$
Dually one defines \emph{left perpendicular category}.
Assume  $\mathcal{C}$ is triangulated and let $\thick(\mathcal{S})$ denote the smallest triangulated
subcategory of $\mathcal{C}$ containing objects in $\mathcal{S}$ and closed under taking direct summands. We say that
$\mathcal{S}$ is a set of generators of $\mathcal{C}$, or that $\mathcal{C}$ is generated by $\mathcal{S}$, when $\mathcal{C} = \thick(\mathcal{S})$. We put
$$\mathcal{S}^{\bot_{\mathcal{C}}}:=\{X\in\mathcal{C}\mid\Hom^{n}(S, X)=0\  {\rm{for}}\ {\rm{all}}\ S\in\mathcal{S}, n\in\mathbb{Z}\}.$$
Dually one defines  $^{\bot_{\mathcal{C}}}\mathcal{S}$.

\section{Preliminary}

In this section we recall homological properties for  tube categories and its bounded derived categories, and the definitions  and basic properties of hearts and simple-minded collections in triangulated categories.

\subsection{Tube categories}
The  \emph{tube}  category $\textbf{T}_p$ of rank $p\geq1$ is an abelian category in the sense of \cite[Sect.\;4.6]{ring} and \cite[Chap.\;X]{sims}, that is,
the nilpotent representation category of cyclic quiver $C_{p}$ with $p$ vertices, whose associated Auslander-Reiten (AR for short) quiver  $\Gamma(\textbf{T}_p)$ is a \emph{stable tube} of rank $p$.

Recall that  $\textbf{T}_p$ is a hereditary finite length abelian category with $p$ simple objects $S_{0}, \cdots, S_{p-1}$, equipped with an Auslander-Reiten (AR for short) translation $\tau$ satisfying $\tau(S_{i} )=S_{i-1}$, where the index is considered module $p$.
For any simple object $S_{j}$ in a tube category
$\textbf{T}_p$ and any $t\in \mathbb{Z}_{\geq 1}$, there is a unique object $S^{(t)}
_{j}$ of length $t$ and top $(S^{(t)}_{j}) = S_{j}$, and any indecomposable object in $\textbf{T}_p$ has this form. The tube category $\textbf{T}_p$ is a uniserial category in the sense
that all subobjects of $S^{(t)}_{j}$ form a chain with respect to the inclusion:
$$ 0:=S^{(0)}_{j-t}\subseteq S^{(1)}_{j-t+1}\subseteq S^{(2)}_{j-t+2}\subseteq \cdots \subseteq S^{(t-1)}_{j-1}\subseteq S^{(t)}_{j}.$$
Consequently, the socle of $S^{(t)}_{j}$ is given by soc $(S^{(t)}_{j})=S^{}_{j-t+1}$. Moreover, the \emph{composition factor sequence} of $S_{j}^{(t)}$ is given $(S_{j-t+1}, \cdots, S_{j-1}, S_{j})$, since $S^{(r)}_{j-t+r}/S^{(r-1)}_{j-t+r-1}=S^{}_{j-t+r}$ for $1\leq r\leq t$. The set of pairwise distinct simple objects appearing in this sequence is called the \emph{composition factor set} of $S_{j}^{(t)}$. It is easy to see that any $S_{j}^{(t)}$ with $t\geq p$ has the same composition factor set $\{S_0, S_2, \cdots, S_{p-1}\}$.

We denote by $u_{j,t}: S_{j-1}^{(t-1)} \rightarrow S_{j}^{(t)}$ and $p_{j,t}:  S_{j}^{(t)} \rightarrow S_{j}^{(t-1)}$ the irreducible injection map and surjection map respectively. For convenience, in the following we will simply denote them by $u$ and $p$
respectively if no confusions appear. For example, the composition
$$\xymatrix@C=3.9em@R=1ex@M=1.2pt@!0{
S_{j}^{(t)} \ar[rr]^-{p_{j,t}}& &S_{j}^{(t-1)} \ar[rr]^-{p_{j,t-1}}& &S_{j}^{(t-2)}\ar[rr]^-{p_{j,t-2}} &&\cdots\ar[rr]^-{p_{j,3}}&&S_{j}^{(2)}\ar[rr]^-{p_{j,2}}& & S_{j}^{}  \\ }
$$
will be just denoted by $p^{t-1}: S_{j}^{(t)}\rightarrow S_{j}^{}$. With this simplified notations, we have $u\circ p = p\circ u$
whenever it makes sense.

The AR-quiver $\Gamma(\textbf{T}_p)$ of the tube category $\textbf{T}_p$ has the shape of a `tube', which has an explicit description in \cite[Chap. X]{sims}.
We will provide a concrete example of $\Gamma(\textbf{T}_p)$ as in (\ref{ARquiveroftube}) in Section 3.

\subsection{Bounded derived categories of tube categories}
Following \cite{lenz} the bounded derived categories $\mathcal{D}^{b}(\textbf{T}_p)$ of tube categories $\textbf{T}_p$ is naturally equivalent  to the repetitive category
$\mathop{\bigvee}\limits_{k\in\mathbb{Z}}^{}\textbf{T}_p[k]$,
where each $\textbf{T}_p[k]$ is a copy of $\textbf{T}_p$, with objects written $S_{j}^{(t)}[k]$ for $S_{j}^{(t)}$ in $\textbf{T}_p$, and
morphisms given by
$$\Hom_{\mathcal{D}^{b}(\textbf{T}_p)}(S_{j_{1}}^{(t_{1})}[k_{1}], S_{j_{2}}^{(t_{2})}[k_{2}])\cong\Ext_{\textbf{T}_p}^{k_{2}-k_{1}}(S_{j_{1}}^{(t_{1})}, S_{j_{2}}^{(t_{2})}).$$
Here the expression $\mathop{\bigvee}\limits_{k\in\mathbb{Z}}^{}\textbf{T}_p[k]$ has two meanings. First it stands for the
additive closure $\add(\mathop{\cup}\limits_{k\in\mathbb{Z}}^{}\textbf{T}_p[k])$ of the union of all $\textbf{T}_p[k]$, and secondly it
indicates that there are no nonzero morphisms backwards, that is, from $\textbf{T}_p[k_{1}]$
to $\textbf{T}_p[k_{2}]$ for $k_{1}>k_{2}$.

We collect fundamental exact sequences and non-zero hommorphisms in the tube category $\textbf{T}_p$, see for examples \cite[Thm.\;2.2]{sims},  \cite[Lem.\;A.1]{rw} and \cite[Prop.  4.3]{stab}.
\begin{lem}\label{exact seqs in tube}
The following are exact sequences in {\rm{\textbf{T}}$_p$} for any $j \in \mathbb{Z}/p\mathbb{Z}$ and $t\geq1$:
$$\xymatrix@C=4.9em@R=5ex@M=2.5pt@!0{
\quad (1)\ \ 0 \ar[r]& S_{j-1}^{(t)} \ar[rr]^{u}& &S_{j}^{(t+1)} \ar[rr]^-{p^{t}}& & S_{j}^{}\ar[r]& 0; & & & & & & & & &  & & & & & & \\
\quad (2)\ \ 0 \ar[r]& S_{j-t}^{} \ar[rr]^-{u^{t}}& &S_{j}^{(t+1)} \ar[rr]^-{p}& & S_{j}^{(t)}\ar[r]& 0; & & & & & & & & &  & & & & & & \\
\quad (3)\ \ 0 \ar[r]& S_{j}^{(t)} \ar[rr]^-{(u,p)^{t}}&  &S_{j+1}^{(t+1)}\oplus S_{j}^{(t-1)} \ar[rr]^-{(p,-u)}&  & S_{j+1}^{(t)}\ar[r]& 0; & & & & & & & & &  & &  \\
\quad (4)\ \ 0 \ar[r]^-{} &  S_{j-t+1}^{} \ar[rr]^-{u^{t-1}}& &S_{j}^{(t)} \ar[rr]^-{u\circ p = p\circ u}& & S_{j+1}^{(t)}\ar[r]^-{p^{t-1}}  & S_{j+1}^{}\ar[r] & 0.& & & \\
  }
$$
\end{lem}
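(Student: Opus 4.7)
The plan is to deduce all four sequences from the uniserial structure of $\textbf{T}_p$ recalled just before the lemma: every indecomposable object is an $S_j^{(t)}$, its lattice of subobjects is a chain, its socle equals $S_{j-t+1}$, its top equals $S_j$, and its composition factor sequence is explicit. Together with the observation that the irreducible maps $u$ and $p$ are (up to scalar) the unique inclusion of a length-$(t-1)$ subobject and the unique quotient onto a length-$(t-1)$ factor, this already forces exactness in (1), (2) and (4).

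Concretely, for (1) the irreducible monomorphism $u : S_{j-1}^{(t)} \hookrightarrow S_j^{(t+1)}$ identifies its source with the unique length-$t$ subobject of $S_j^{(t+1)}$, whose quotient is $\top(S_j^{(t+1)}) = S_j$, and this quotient is precisely the iterated surjection $p^t$. Dually, for (2), $u^t : S_{j-t} \hookrightarrow S_j^{(t+1)}$ identifies $S_{j-t}$ with $\soc(S_j^{(t+1)})$, while $p : S_j^{(t+1)} \twoheadrightarrow S_j^{(t)}$ is the unique length-$t$ quotient, so its kernel is this length-one socle. For (4), I would factor $\varphi = u\circ p = p\circ u : S_j^{(t)} \to S_{j+1}^{(t)}$ through $S_j^{(t-1)}$: by (2), $\Ker(p) = \soc(S_j^{(t)}) = S_{j-t+1}$, and by (1), $\Cok(u : S_j^{(t-1)} \hookrightarrow S_{j+1}^{(t)}) = S_{j+1}$. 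Since $p$ is surjective and $u$ is injective, $\Ker(\varphi) = \Ker(p)$ and $\Cok(\varphi) = \Cok(u)$, and splicing these identifications yields the four-term exact sequence (4).

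The only subtle item is (3), which I recognise as the Auslander-Reiten sequence ending at $S_{j+1}^{(t)}$ in the stable tube $\Gamma(\textbf{T}_p)$. Using $\tau(S_i) = S_{i-1}$ and the componentwise action of $\tau$ on the $S_j^{(t)}$, one gets $\tau(S_{j+1}^{(t)}) = S_j^{(t)}$, so the starting term is indeed $S_j^{(t)}$; the two middle terms of the mesh ending at $S_{j+1}^{(t)}$ are its neighbours $S_{j+1}^{(t+1)}$ and $S_j^{(t-1)}$ (with the convention $S_j^{(0)} = 0$ so that the sequence degenerates to a special case of (1) when $t=1$), and the two mesh maps are exactly the irreducible morphisms $u$ and $p$. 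Exactness is then the mesh relation for a stable tube, as recorded in the cited references \cite[Thm.\;2.2]{sims}, \cite[Lem.\;A.1]{rw} and \cite[Prop.\;4.3]{stab}. Alternatively one can argue directly from composition factors, noting that the middle term has length $2t$ and the outer terms contribute $t + t = 2t$, and check injectivity of $(u,p)^t$ and the image description of $(p,-u)$ by chasing the socle and top filtrations.

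I expect the main obstacle to be purely bookkeeping: getting the indices right in the shift $j \mapsto j \pm 1$ and $t \mapsto t \pm 1$, and pinning down the sign in $(p,-u)$ so that the two mesh paths cancel. Conceptually the entire lemma is just uniseriality plus Auslander-Reiten theory in a stable tube, and no further input beyond what is recalled in the preceding paragraph is required.
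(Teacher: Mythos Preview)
Your argument is correct. The paper does not actually prove this lemma: it merely records the four sequences as ``fundamental exact sequences'' in $\textbf{T}_p$ and refers the reader to \cite[Thm.\;2.2]{sims}, \cite[Lem.\;A.1]{rw} and \cite[Prop.\;4.3]{stab}. Your derivation of (1), (2) and (4) directly from the uniserial structure recalled in the preceding paragraph, together with the identification of (3) as the Auslander--Reiten sequence ending at $S_{j+1}^{(t)}$ in the stable tube, is precisely the standard justification behind those references, so you have supplied the details the paper chose to omit.
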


\begin{lem}\label{non-zero morphism between objects}
For any objects $S_{j_i}^{(t_i)}\; (i=1,2)$ in {\rm{\textbf{T}}$_p$}, $\Hom(S_{j_1}^{(t_1)}, S_{j_2}^{(t_2)})\neq 0$ if and only if the following hold:
\begin{itemize}
  \item [(1)] $\top(S_{j_1}^{(t_1)})$ belongs to the composition factors set of $S_{j_2}^{(t_2)}$;
  \item [(2)] $\soc(S_{j_2}^{(t_2)})$ belongs to the composition factors set of $S_{j_1}^{(t_1)}$.
\end{itemize}
\end{lem}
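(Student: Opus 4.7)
The plan is to exploit the uniserial structure of $\textbf{T}_p$ recalled just before the statement: every subobject of $S_j^{(t)}$ is of the form $S_{j-t+k}^{(k)}$ and every quotient is of the form $S_j^{(r)}$. Thus for any morphism $f\colon S_{j_1}^{(t_1)}\to S_{j_2}^{(t_2)}$, the image $\im(f)$ is simultaneously a quotient of the source and a subobject of the target, and this forces strong constraints on its top and socle. The lemma will follow by reading these constraints off.

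For the necessity direction, suppose $f\neq 0$, so $\im(f)\neq 0$. By uniseriality, $\im(f)=S_{j_1}^{(r)}$ for some $1\le r\le t_1$ (as a quotient of $S_{j_1}^{(t_1)}$) and $\im(f)=S_{j_2-t_2+s}^{(s)}$ for some $1\le s\le t_2$ (as a subobject of $S_{j_2}^{(t_2)}$). Comparing tops gives $S_{j_1}=\top(\im f)=S_{j_2-t_2+s}$ with $1\le s\le t_2$, which is condition (1); comparing socles gives $S_{j_1-r+1}=\soc(\im f)=S_{j_2-t_2+1}$ with $1\le r\le t_1$, which exhibits $S_{j_2-t_2+1}$ as a composition factor of $S_{j_1}^{(t_1)}$ and yields (2).

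For the sufficiency direction, assume (1) and (2). From (1), pick $1\le a\le t_2$ with $j_1\equiv j_2-t_2+a\pmod p$; from (2), pick $1\le b\le t_1$ with $j_1-b+1\equiv j_2-t_2+1\pmod p$. Both give $a\equiv b\equiv j_1-j_2+t_2\pmod p$, so we may choose a common $r$ with $1\le r\le \min(t_1,t_2)$ and $r\equiv j_1-j_2+t_2\pmod p$ (when $t_1$ or $t_2\ge p$ there are several choices, but the minimal positive representative of this residue class always works since it is bounded by both $a$ and $b$). Then $S_{j_1}^{(r)}$ is both the quotient $S_{j_1}^{(t_1)}\twoheadrightarrow S_{j_1}^{(r)}$ obtained by iterating $p_{-,-}$, and the subobject $S_{j_2-t_2+r}^{(r)}\hookrightarrow S_{j_2}^{(t_2)}$ obtained by iterating $u_{-,-}$. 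Composing the projection and the inclusion yields a non-zero morphism $S_{j_1}^{(t_1)}\to S_{j_2}^{(t_2)}$, using Lemma \ref{exact seqs in tube} to identify the relevant kernels and cokernels.

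The only delicate point is the bookkeeping of indices modulo $p$ in the sufficiency part when $t_1$ or $t_2$ may exceed $p$, so that the composition factor set of $S_{j_2}^{(t_2)}$ becomes the full set $\{S_0,\dots,S_{p-1}\}$ and $a, b$ are no longer unique. The main obstacle is therefore verifying that a consistent $r$ with $1\le r\le\min(t_1,t_2)$ always exists; this is resolved by taking the unique representative of the class $j_1-j_2+t_2\pmod p$ lying in $\{1,\dots,p\}$, which is automatically $\le a\le t_2$ and $\le b\le t_1$. Once this is done, the existence of the non-zero composition is immediate from uniseriality and the explicit form of the irreducible maps $u,p$.
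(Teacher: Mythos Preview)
The paper does not supply its own proof of this lemma; it is stated immediately after Lemma~\ref{exact seqs in tube} with both results attributed to the references \cite[Thm.\;2.2]{sims}, \cite[Lem.\;A.1]{rw} and \cite[Prop.\;4.3]{stab}. So there is nothing to compare against except the bare citation.

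Your argument is correct and is the natural direct proof from uniseriality. The necessity direction is clean: the image of a non-zero $f$ is simultaneously a quotient $S_{j_1}^{(r)}$ of the source and a subobject $S_{j_2-t_2+s}^{(s)}$ of the target, and reading off top and socle yields (1) and (2). For sufficiency, your key observation---that the minimal positive representative $r\in\{1,\dots,p\}$ of the residue class $j_1-j_2+t_2\pmod p$ satisfies $r\le a\le t_2$ and $r\le b\le t_1$ because $a,b$ are themselves positive representatives of that class---is exactly what is needed to guarantee $1\le r\le\min(t_1,t_2)$, after which the composite of the canonical surjection $p^{t_1-r}$ with the canonical injection $u^{t_2-r}$ is visibly non-zero. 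The appeal to Lemma~\ref{exact seqs in tube} is not strictly necessary (you only need the uniserial description of subobjects and quotients recalled in \S2.1), but it does no harm.
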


\subsection{Hearts in triangulated categories}
Let $\mathcal{D}$ be a triangulated category. Following \cite{bbd} a \emph{t-structure}  on $\mathcal{D}$ is a pair $(\mathcal{D}^{\leq 0}, \mathcal{D}^{\geq 0})$ of strictly full subcategories $(\mathcal{D}^{\leq n}:= \mathcal{D}^{\leq 0}[-n], \mathcal{D}^{\geq n}:= \mathcal{D}^{\geq 0}[-n])$ such that
\begin{itemize}
\item [(1)] $\Hom(\mathcal{D}^{\leq 0}, \mathcal{D}^{\geq 1} )= 0;$

\item [(2)] $\mathcal{D}^{\leq -1} \subset \mathcal{D}^{\leq 0}, \mathcal{D}^{\geq 1} \subset \mathcal{D}^{\geq 0};$

\item [(3)] For any object $X$ in $\mathcal{D}$, there exist a triangle  $A \rightarrow X \rightarrow B \rightarrow A[1]$ with $A \in \mathcal{D}^{\leq 0} , B \in \mathcal{D}^{\geq 1}.$
\end{itemize}
The \emph{heart} $\mathcal{A} :=\mathcal{D}^{\leq 0} \cap \mathcal{D}^{\geq 0}$ of $(\mathcal{D}^{\leq 0}, \mathcal{D}^{\geq 0})$ is always abelian. The t-structure $(\mathcal{D}^{\leq 0}, \mathcal{D}^{\geq 0})$ is said to be \emph{bounded} if
$$ \mathop{\cup}  \limits_{ n \in \mathbb{Z} }^{} \mathcal{D}^{\leq 0}[n]  = \mathcal{D} = \mathop{\cup}  \limits_{ n \in \mathbb{Z} }^{} \mathcal{D}^{\geq 0}[n].$$

In this paper, we only consider bounded t-structures and their hearts.

\subsection{Simple-minded collections}
Following  \cite{kysi,simp} a collection $\mathcal{X}=  \{  X_{1},\ \dots,\ X_{r}  \}$ of objects in $\mathcal{D}$ is said to be a \emph{simple-minded collection} (\emph{SMC} for short) if the following conditions hold for $i, j = 1,\ \dots,\ r:$
\begin{itemize}
\item [(1)] $\Hom(X_{i}, X_{j} [k]) = 0,\,\forall\,k < 0$;

\item [(2)] $\End(X_{i})$ is a division algebra and $\Hom(X_{i}, X_{j})$ vanishes for $i\not= j$;

\item [(3)] $\mathcal{D}=$ thick $(X_{1},\ \dots,\ X_{r})$, i.e., $X_{1},\ \dots,\ X_{r}$ generate $\mathcal{D}$.
\end{itemize}

The following result connects the relationship between hearts and SMCs, cf. \cite[\rm{Thm.\;7.12}]{kysi}.
\begin{lem}\label{bijectionHeartandSmc}
For a Hom-finite Krull-Schmidt triangulated category $\mathcal{D}$, there is a bijection between the set of the finite length hearts of bounded t-structures and the set of SMCs which commute with mutations.
\end{lem}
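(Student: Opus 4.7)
The plan is to construct explicit mutually inverse maps between the two sets, namely sending a finite length heart to its set of (isoclasses of) simple objects, and sending an SMC $\mathcal{X}=\{X_1,\dots,X_r\}$ to the extension closure $\langle \mathcal{X}\rangle_{\mathcal{D}}$ inside $\mathcal{D}$. First I would verify the forward direction: if $\mathcal{A}$ is a finite length heart of a bounded t-structure with simple objects $\{S_1,\dots,S_r\}$, then this collection is an SMC. The Hom-vanishing in negative degrees follows from $\Hom(\mathcal{D}^{\leq 0},\mathcal{D}^{\geq 1})=0$ applied to $S_i\in\mathcal{A}$ and $S_j[k]\in\mathcal{D}^{\geq 1}$ for $k<0$; the Schur-type condition $\End(S_i)$ a division algebra and $\Hom(S_i,S_j)=0$ for $i\neq j$ is standard for simples in an abelian Krull-Schmidt category; and the generation condition $\mathcal{D}=\thick(S_1,\dots,S_r)$ follows because a finite length heart is generated under extensions by its simples, while a bounded t-structure has $\mathcal{D}=\bigcup_{m\leq n}\mathcal{A}[m,n]$.

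Next I would establish the reverse direction: given an SMC $\mathcal{X}$, define
\begin{align*}
\mathcal{D}^{\leq 0}_{\mathcal{X}} &= \{Y\in\mathcal{D}\mid \Hom(Y,X_i[k])=0 \text{ for all } i \text{ and } k<0\},\\
\mathcal{D}^{\geq 0}_{\mathcal{X}} &= \{Y\in\mathcal{D}\mid \Hom(X_i[k],Y)=0 \text{ for all } i \text{ and } k>0\},
\end{align*}
and show that this pair is a bounded t-structure with heart $\mathcal{A}_{\mathcal{X}}:=\langle\mathcal{X}\rangle_{\mathcal{D}}$, whose simples are exactly the $X_i$. Axiom (1) of a t-structure is built into the definition, axiom (2) is immediate, and axiom (3) requires producing truncation triangles; here I would proceed by induction on the number of shifts of SMC-objects needed to build $Y$ (using the Hom-finiteness, Krull--Schmidt property, and $\mathcal{D}=\thick(\mathcal{X})$ to ensure the induction terminates). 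Finite length of $\mathcal{A}_{\mathcal{X}}$, and the identification of its simples with $\mathcal{X}$, then follow from the orthogonality and Schur properties of the $X_i$ together with the extension-closure description.

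Having both maps, I would check that they are mutually inverse on objects: starting from a heart $\mathcal{A}$ with simples $\mathcal{X}$, the extension closure $\langle\mathcal{X}\rangle_{\mathcal{D}}$ recovers $\mathcal{A}$ because every object of a finite length heart admits a composition series; starting from an SMC $\mathcal{X}$, the simples of $\mathcal{A}_{\mathcal{X}}$ are precisely the $X_i$ by the previous paragraph. Finally, for the compatibility with mutations, I would recall that a simple HRS-tilt of $\mathcal{A}$ at a simple $S_i$ produces a new finite length heart whose simples differ from those of $\mathcal{A}$ exactly by the formulas defining the left/right mutation $\mu^\pm_{X_i}(\mathcal{X})$ of SMCs; verifying this identification on each side (heart-side via the torsion pair generated by $S_i$, SMC-side via the defining triangles of mutation) shows the bijection intertwines the two mutation operations.

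The main obstacle I anticipate is the proof of axiom (3) for the t-structure associated to an SMC, i.e., the existence of truncation triangles for an arbitrary $Y\in\mathcal{D}$. The naive induction on "length" requires a well-defined complexity measure, and one must use Hom-finiteness plus Krull--Schmidt carefully to ensure the inductive construction of the triangle $A\to Y\to B\to A[1]$ with $A\in\mathcal{D}^{\leq 0}_{\mathcal{X}}$ and $B\in\mathcal{D}^{\geq 1}_{\mathcal{X}}$ does terminate; this is where the hypotheses on $\mathcal{D}$ are essential and is the technical heart of the argument in \cite{kysi}.
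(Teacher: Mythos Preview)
The paper does not actually prove this lemma; it is stated as a known result with a citation to \cite[Thm.~7.12]{kysi} (K\"onig--Yang) and no argument is given. Your proposal is a reasonable and essentially correct sketch of the proof that appears in that reference: the forward map sends a finite length heart to its set of simples, the backward map sends an SMC to its extension closure (equivalently, the heart of the t-structure you describe), and the compatibility with mutation is exactly the observation that simple HRS-tilts correspond to SMC mutations.

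One minor caution: in your description of the t-structure $(\mathcal{D}^{\leq 0}_{\mathcal{X}},\mathcal{D}^{\geq 0}_{\mathcal{X}})$, you say axiom~(1) is ``built into the definition,'' but in fact the vanishing $\Hom(\mathcal{D}^{\leq 0}_{\mathcal{X}},\mathcal{D}^{\geq 1}_{\mathcal{X}})=0$ is not immediate from the orthogonality conditions you wrote down and really requires knowing that the extension closures $\langle\mathcal{X}[k]:k\geq 0\rangle$ and $\langle\mathcal{X}[k]:k\leq 0\rangle$ coincide with your aisle and coaisle; this is intertwined with axiom~(3), and both are handled together in \cite{kysi} (building on earlier work of Al-Nofayee \cite{simp} and Rickard--Rouquier). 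Since the paper treats the lemma as a black box, no further detail is expected here.
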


\begin{rem}
In general,  it  seems a lack of
detailed description of SMCs in  a  triangulated category.
In next section, we will provide a classification of SMCs in $\mathcal{D}^{b} (\textbf{T}_p)$ via smaller SMCs  in the triangulated subcateroties of type $\mathbb{A}$.
\end{rem}

\section{Pre-smcs of type $\mathbb{A}$ and  hearts in $\mathcal{D}^{b} (${\rm{\textbf{T}}}$_p)$ }
In this  section we introduce the notion of pre-simple-minded collection (pre-SMC) of type $\mathbb{A}$  in $\mathcal{D}=\mathcal{D}^{b} (\textbf{T}_p)$.
We obtain a classification of the hearts in $\mathcal{D}^{b} (\textbf{T}_p)$ via pre-SMCs of type $\mathbb{A}$.

Given integers $1\leq l_{i}<p$ and $a_{i}$ such that $S_{a_{i}}\in\textbf{T}_p$ which  $a_{i}$ is considered module $p$.  We write $\mathcal{A}_{l_{i}}=\langle  \tau^{j}S_{a_{i}} \ | \ 0 \leq j < l_{i}  \rangle$ for a subcategory of $\textbf{T}_p$. It follows that $\mathcal{A}_{l_{i}}\simeq \Mod\;  \mathbb{A}_{l_{i}}$ as an equivalence of categories, where $\mathbb{A}_{l_{i}}$ means Dynkin quiver algebra of type $\mathbb{A}$ with $l_{i}$ simples.
Let $\mathcal{S}$ be a (possibly empty) proper collection of simple objects in $\mathcal{A}=\textbf{T}_p$.
For simplification, we represent $\mathcal{S}$ with non-empty by
\begin{equation}\label{non-empty of simples}
\mathcal{S}=\mathop{\cup}\limits_{i=1 }^{n}\{\tau^{k}S_{a_{i}} \mid0\leq k < l_{i}\}
\end{equation}
with $a_{0}=a_{n}-p$ and $a_{i-1}<a_{i}$.
There is an equivalence
$\langle \mathcal{S}\rangle_{\mathcal{A}}  = \mathop{\coprod} \limits_{ i=1}^{n}\, \mathcal{A}_{l_{i}}  \simeq  \mathop{\coprod} \limits_{i=1 }^{n}  \Mod\,   \mathbb{A}_{l_{i}}$ and $\langle \mathcal{S}\rangle_{\mathcal{D}}  =  \mathop{\coprod} \limits_{i=1 }^{n}\, \mathcal{D}^{b}(\mathcal{A}_{l_{i}})$. The set $\mathcal{S}$ in $\Gamma(\textbf{T}_p)$ has the following form, where the objects in $\mathcal{S}$ have been marked by \xymatrix@C=3.1em@R=8.5ex@M=3.8pt@!0{ *+[F]{\;\;} \\ }\,.
\begin{equation} \label{ARquiveroftube}
\xymatrix@C=1.8em@R=2.8ex@M=0.3pt@!0{
&&\ar@{--}[ddddd]&&&&&&&&&&&&&&&&&&&&\ar@{--}[ddddd]&&&&&&&\\
&&&&&&\vdots&&&&&&\vdots&&&&&&&\vdots&&&&&&\\
&&&&&&&&&&&&\mathcal{A}_{l_{i}}&&&&&&&&&&&&&&&&\\
&&&\cdots&&&\mathcal{A}_{l_{i-1}}&&\quad\cdots&&&&\ar@{->}[rd]&&&\cdots&&&&
\mathcal{A}_{l_{i+1}}&&\quad\cdots&&&&\\
&&&&&&\ar@{->}[rd]&&&&&
\ar@{->}[ru]\ar@{->}[rd]&&\ar@{->}[rd]&&
\ar@{->}[rd]&&
\ar@{->}[rd]&&\ar@{->}[rd]&&&&&&&\\
&&S_{0}&\cdots&&*+[F]{\tau S_{a_{i-1}}}\ar@{->}[ru]&&*+[F]{S_{a_{i-1}}}&\quad\cdots&&
*+[F]{\tau^{2}S_{a_{i}}}\ar@{->}[ru]&&*+[F]{\tau S_{a_{i}}}\ar@{->}[ru]&&*+[F]{S_{a_{i}}}\ar@{->}[ru]&&
\tau^{-1}S_{a_{i}}\ar@{->}[ru]&&
*+[F]{\tau S_{a_{i+1}}}\ar@{->}[ru]&&*+[F]{S_{a_{i+1}}}&\quad\cdots&
S_{0}&&&&&&&\\  }
\end{equation}

Given $\mathcal{S}= \mathop{\cup}\limits_{ i=1 }^{n}\{  \tau^{k}S_{a_{i}}\mid0\leq k < l_{i}\}$ as in (\ref{non-empty of simples}), we denote $F$ by the following assignment
$$F\,:\langle \mathcal{S} \rangle\longrightarrow \langle \mathcal{S}\cap\tau\mathcal{S} \rangle\cup \left( \langle\tau\mathcal{S}[1]\rangle\setminus\langle \mathcal{S}[1]\cap\tau\mathcal{S}[1]\rangle\right)$$
satisfying
\begin{equation}\label{F action in type A}
F(S^{(t)}_{j})=
\begin{cases}
S^{(l_{i}+1-t_{})}_{a_{i}-t}[1], & \text{if $\top (S^{(t)}_{j})=S^{}_{a_{i}};$}\\
S^{(t)}_{j}, & \text{if else.}\\
\end{cases}
\end{equation}

Let $l(0)=0$ and $l(i)=l(i-1)+l_{i}$\,$(1\leq i\leq n)$.
Further we represent a SMC\, $\mathcal{U}_{i}$ in $\mathcal{D}^{b}(\mathcal{A}_{l_{i}})$ as
$\mathcal{U}_{i}=\{ S^{(t_{m})}_{j_{m}}[k^{}_{m}] \ |\ l(i-1)+1\leq m\leq l(i-1)+l_{i}\}$ with $S^{(t_{m})}_{j_{m}}\in\mathcal{A}_{l_{i}}$ and $k^{}_{m}\in\mathbb{Z}$.
Let $l=l(n)$. Then the collection $\mathcal{U}:=\mathop{\cup} \limits_{ i=1 }^{n}\mathcal{U}_{i}=\{ S^{(t_{m})}_{j_{m}}[k_{m}] \ |\ 1\leq m\leq l \}$ is a SMC in $\langle \mathcal{S} \rangle_{\mathcal{D}}$ and $S^{(l_{i}+1)}_{a_{i}}, S_{b}\in \mathcal{U}^{\bot_{\mathcal{D}}}\,(1\leq i\leq n, a_{i-1}<b<a_{i}-l_{i})$.
Let $F(\mathcal{U}_{i})=\mathop{\cup}\limits_{ m=l(i-1)+1 }^{l(i-1)+l(i)} \{F(S^{(t_{m})}_{j_{m}})[k^{}_{m}]\mid k^{}_{m}\geq0 \}\dot\cup\{S^{(t_{m})}_{j_{m}}[k^{}_{m}]\mid k^{}_{m}<0\}$,
and $\mathcal{X}_{i}$ be the  following (disjoint) collection of objects
\begin{equation}\label{F(U_i)and X_i}
\mathcal{X}_{i}=\{S^{(l_{i}+1)}_{a_{i}},  S_{b}\mid a_{i-1}<b<a_{i}-l_{i} \}\cup F(\mathcal{U}_{i}).
\end{equation}
Denote by
\begin{equation}\label{collection of X}
\mathcal{X}:=
\begin{cases}
\{ S_{0}, S_{1}, \cdots, S_{p-1}  \}, & \text{if $\mathcal{S}=\emptyset;$}\\ \mathop{\cup}\limits_{ i=1 }^{n}\mathcal{X}_{i}, & \text{if else.}\\
\end{cases}
\end{equation}

Clearly, $\mathcal{X}=\{ S_{0}, S_{1}, \cdots, S_{p-1}\}$ is a SMC in $\mathcal{D}^{b} (${\rm{\textbf{T}}}$_p)$.
From now onward, we always consider non-empty  $\mathcal{S}= \mathop{\cup} \limits_{ i=1 }^{n}\{ \tau^{k}S_{a_{i}}\mid0\leq k < l_{i}\}$ as in (\ref{non-empty of simples}) and keep notations as above in the rest of paper.

In the following we show  each collection
$\mathcal{X}=\mathop{\cup}\limits_{ i=1 }^{n}\mathcal{X}_{i}$ as in (\ref{collection of X}) is a SMC in $\mathcal{D}^{b} (${\rm{\textbf{T}}}$_p)$.

\begin{lem} \label{vanising homs}
Keep notations as in (\ref{collection of X}).
Let $S^{(t_{})}_{j}\in\langle \mathcal{S}\rangle_{\mathcal{D}}$, $S^{(t_{m})}_{j_{m}}[k^{}_{m}]\in\mathcal{U}$ and $S^{(l_{i}+1)}_{a_{i}}, S_{b}\in \mathcal{U}^{\bot_{\mathcal{D}}}\,(1\leq i\leq n, a_{i-1}<b<a_{i}-l_{i})$.
Then
\begin{itemize}
\item [(1)] $\Hom^{\bullet}(S^{(t_{})}_{j}, S^{(l_{i}+1)}_{a_{i}})=0=\Hom^{\bullet}(S^{(t_{})}_{j}, S^{}_{b})$ with $j\neq a_{i}$;
\item [(2)] $\Hom^{\bullet}(S^{(l_{i}+1)}_{a_{i}}, S^{(t_{})}_{j})=0=\Hom^{\bullet}(S^{}_{b}, S^{(t_{})}_{j})$ with $j\neq a_{i}$;
\item [(3)] $\Hom^{\bullet}(S^{(l_{i}+1)}_{a_{i}}, F(S^{(t_{})}_{a_{i}}))=0
=\Hom^{\bullet}(S^{}_{b}, F(S^{(t_{})}_{a_{i}}))$ with $j= a_{i}$;
\item [(4)] $\Hom^{d}(S^{(t_{m'})}_{j_{m'}}[k^{}_{m'}], F(S^{(t_{m})}_{j_{m}})[k^{}_{m}])=0$ for any $d\leq0$, $k^{}_{m'}<0$ and $k^{}_{m}\geq0$.
\end{itemize}
\end{lem}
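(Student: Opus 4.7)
The plan is to reduce each of the four assertions to $\Hom$ and $\Ext^{1}$ computations in $\textbf{T}_p$, because $\textbf{T}_p$ is hereditary and hence $\Hom^{k}(X,Y)=0$ for $X,Y\in\textbf{T}_p$ whenever $k\notin\{0,1\}$. The two workhorses will be Lemma~\ref{non-zero morphism between objects}, which says that $\Hom$ between indecomposables in $\textbf{T}_p$ is nonzero exactly when the source's top is a composition factor of the target \emph{and} the target's socle is a composition factor of the source, together with the AR duality $\Ext^{1}(X,Y)\cong D\Hom(Y,\tau X)$ valid in the tube category, which converts an $\Ext^{1}$ into a $\Hom$ after applying $\tau$ (i.e., decrementing the subscript by one).

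The structural input is that any $S_j^{(t)}\in\langle\mathcal{S}\rangle_{\mathcal{D}}$ has composition factors inside one of the arcs $\mathcal{S}_{i'}$, while $S^{(l_i+1)}_{a_i}$ has top $S_{a_i}$ and socle $S_{a_i-l_i}$ lying just outside $\mathcal{S}_i$, and every gap simple $S_b$ sits in the complement of $\mathcal{S}$. For part~(1), the socle of the target (either $S_{a_i-l_i}$ or $S_b$) cannot be a composition factor of $S_j^{(t)}$ under the arc-disjointness and gap hypotheses, so Lemma~\ref{non-zero morphism between objects} immediately gives $\Hom=0$. For the $\Ext^{1}$, AR duality replaces $S_j^{(t)}$ by $\tau S_j^{(t)}=S_{j-1}^{(t)}$, whose composition factors lie in $\tau\mathcal{S}_{i'}$; again the top $S_{a_i}$ (resp.\ $S_b$) of the target is not in this set, by the hypothesis $j\neq a_i$ and the arc structure.

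Parts~(2) and~(3) follow by an entirely analogous combinatorial analysis: in~(2) the exclusion $j\neq a_i$ rules out $S_{a_i}$ as a composition factor of $S_j^{(t)}$, and the $\Ext^{1}$ is disposed of because the socle of $\tau S^{(l_i+1)}_{a_i}=S^{(l_i+1)}_{a_i-1}$ is $S_{a_i-l_i-1}\notin\mathcal{S}_i$; in~(3) the target $F(S^{(t)}_{a_i})=S^{(l_i+1-t)}_{a_i-t}[1]$ carries an extra shift that simply swaps $\Hom$ and $\Ext^{1}$, and the same argument using Lemma~\ref{non-zero morphism between objects} and AR duality finishes it, since the socle of the relevant $\tau$-translate is still $S_{a_i-l_i}$ outside $\mathcal{S}_i$.

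The main obstacle, and the one where I expect the real work, is~(4). Writing $\Hom^{d}(X[k_{m'}],F(S^{(t_m)}_{j_m})[k_m])=\Hom^{s}(X,F(S^{(t_m)}_{j_m}))$ with $s=d+k_m-k_{m'}$, hereditariness restricts $s$ to $\{0,1\}$, but now both arguments live in shifted positions and $F$ may itself introduce an extra $[1]$ on the target. In the identity branch of $F$, the required vanishing is precisely the SMC orthogonality $\Hom^{n}(U,V)=0$ for $n\leq 0$ between distinct elements of $\mathcal{U}_i$. In the shifted branch, the computation is against the straddling module $W=S^{(l_i+1-t_m)}_{a_i-t_m}$, whose socle is still $S_{a_i-l_i}\notin\mathcal{S}_i$, and Lemma~\ref{non-zero morphism between objects} together with AR duality yield the vanishing once more. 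The delicate bookkeeping point is that $\Ext^{1}$ in $\textbf{T}_p$ can strictly exceed $\Ext^{1}$ in $\mathcal{A}_{l_i}$, so the SMC-orthogonality of $\mathcal{U}_i$ — originally asserted in $\mathcal{D}^{b}(\mathcal{A}_{l_i})$ — must be re-verified inside $\mathcal{D}^{b}(\textbf{T}_p)$; this is routine because every candidate morphism is pinned down by a tight combinatorial condition on composition factors.
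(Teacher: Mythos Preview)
Your treatment of parts (1)--(3) is essentially the paper's own argument: reduce to $\Hom^{0}$ and $\Hom^{1}$ by hereditariness, kill $\Hom^{0}$ with the socle criterion of Lemma~\ref{non-zero morphism between objects}, and kill $\Hom^{1}$ via AR duality $\Ext^{1}(X,Y)\cong D\Hom(Y,\tau X)$ together with the top criterion. The paper does exactly this for (1) and then says ``similarly'' for (2) and (3).

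The gap is in the shifted branch of (4). You assert that against $W=S^{(l_i+1-t_m)}_{a_i-t_m}$ the vanishing of both $\Hom^{0}$ and $\Ext^{1}$ follows ``once more'' from Lemma~\ref{non-zero morphism between objects} and AR duality, using that $\soc(W)=S_{a_i-l_i}\notin\mathcal{S}$. That argument correctly gives $\Hom^{0}(S^{(t_{m'})}_{j_{m'}},W)=0$, but it does \emph{not} in general give $\Ext^{1}(S^{(t_{m'})}_{j_{m'}},W)=0$ when $S^{(t_{m'})}_{j_{m'}}$ lies in the \emph{same} wing $\mathcal{A}_{l_i}$ as $S^{(t_m)}_{a_i}$. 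Indeed, from the short exact sequence $0\to W\to S^{(l_i+1)}_{a_i}\to S^{(t_m)}_{a_i}\to 0$ and the vanishing $\Hom^{\bullet}(S^{(t_{m'})}_{j_{m'}},S^{(l_i+1)}_{a_i})=0$ one gets
\[
\Ext^{1}_{\textbf{T}_p}\bigl(S^{(t_{m'})}_{j_{m'}},W\bigr)\;\cong\;\Hom_{\textbf{T}_p}\bigl(S^{(t_{m'})}_{j_{m'}},S^{(t_m)}_{a_i}\bigr),
\]
and the right-hand side is \emph{not} forced to vanish by composition-factor combinatorics alone (take e.g.\ $p=4$, $l_i=3$, $X=S^{(2)}_{2}$, $t_m=2$: then $W=S^{(2)}_{1}$ and $\Ext^{1}(S^{(2)}_{2},S^{(2)}_{1})\cong D\Hom(S^{(2)}_{1},S^{(2)}_{1})\cong\textbf{k}$). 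What actually kills it is the SMC orthogonality $\Hom^{d}(S^{(t_{m'})}_{j_{m'}}[k_{m'}],S^{(t_m)}_{a_i}[k_m])=0$ for $d=k_{m'}-k_m\leq -1$, which is precisely $\Hom^{0}(S^{(t_{m'})}_{j_{m'}},S^{(t_m)}_{a_i})=0$.

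This is exactly how the paper proceeds: it applies $\Hom^{d}(S^{(t_{m'})}_{j_{m'}}[k_{m'}],-)$ to the triangle $S^{(l_i+1)}_{a_i}\to S^{(t_m)}_{a_i}\to W[1]\to S^{(l_i+1)}_{a_i}[1]$, uses $\Hom^{\bullet}(S^{(t_{m'})}_{j_{m'}},S^{(l_i+1)}_{a_i})=0$ to identify $\Hom^{d}$ against $F(S^{(t_m)}_{a_i})[k_m]$ with $\Hom^{d}$ against $S^{(t_m)}_{a_i}[k_m]$, and then invokes the SMC property of $\mathcal{U}$. Your plan omits this reduction in the shifted branch; the purely combinatorial route you sketch does not close the $\Ext^{1}$ case without it.
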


\begin{proof}
We first prove (1). For any objects $S^{(t_{1})}_{j_{1}},   S^{(t_{2})}_{j_{2}}$ in $\textbf{T}_{p}$, since $\Hom^{\bullet}(S^{(t_{1})}_{j_{1}},  S^{(t_{2})}_{j_{2}})=0$ if and only if $\Hom^{0}(S^{(t_{1})}_{j_{1}},  S^{(t_{2})}_{j_{2}})=0$$=\Hom^{1}(S^{(t_{1})}_{j_{1}},  S^{(t_{2})}_{j_{2}})$, it suffices to prove the latter. Since $S^{(t_{})}_{j}\in\langle \mathcal{S}\rangle_{\mathcal{D}}$ with $j\neq a_{i}$, we have $a_{i}-l_{i}<j<a_{i}$ and $a_{i}-l_{i}<j-t+1<a_{i}$ for some $1\leq i\leq n$. Note that $\soc (S^{(l_{i}+1)}_{a_{i}})=S_{a_{i}-l_{i}}$,
$\top (S^{(l_{i}+1)}_{a_{i}})=S_{a_{i}}$. Then
$\Hom^{0}(S^{(t_{})}_{j}, S^{(l_{i}+1)}_{a_{i}})=0=\Hom^{1}(S^{(t_{})}_{j}, S^{(l_{i}+1)}_{a_{i}})=\D\Hom^{0}(S^{(l_{i}+1)}_{a_{i}}, S^{(t_{})}_{j-1})$ by Lemma \ref{non-zero morphism between objects} since $\soc (S^{(l_{i}+1)}_{a_{i}})$ (resp. $\top (S^{(l_{i}+1)}_{a_{i}})$) does not belong to the composition factor set of $S^{(t_{})}_{j}$ (resp. $S^{(t_{})}_{j-1}$).
Similarly, one can prove (2) and (3).

Assume $S^{(t_{m'})}_{j_{m'}}[k^{}_{m'}], S^{(t_{m})}_{j_{m}}[k^{}_{m}]\in\mathcal{U}$ with $k^{}_{m'}<0$ and $k^{}_{m}\geq0$.
Then $\Hom^{d}(S^{(t_{m'})}_{j_{m'}}[k^{}_{m'}], S^{(t_{m})}_{j_{m}}[k^{}_{m}])=0$.
If $\top (S^{(t_{m})}_{j_{m}})\neq S^{}_{a_{i}}$ for any $1\leq i\leq n$, then $\Hom^{d}(S^{(t_{m'})}_{j_{m'}}[k^{}_{m'}], F(S^{(t_{m})}_{j_{m}})[k^{}_{m}])=0$ since $F(S^{(t_{m})}_{j_{m}}[k^{}_{m}])=S^{(t_{m})}_{j_{m}}[k^{}_{m}]$.
Otherwise, $\top (S^{(t_{m})}_{j_{m}})= S^{}_{a_{i}}$ for some $1\leq i\leq n$ and $F(S^{(t_{m})}_{a_{i}})=S^{(l_{i}+1-t_{m})}_{a_{i}-t_{m}}[1]$.
Applying $\Hom^{d}(S^{(t_{m'})}_{j_{m'}}[k^{}_{m'}], -)$ to the following triangle
$$S^{(l_{i}+1)}_{a_{i}} \rightarrow S^{(t_{m})}_{a_{i}} \rightarrow S^{(l_{i}+1-t_{m})}_{a_{i}-t_{m}}[1]\rightarrow S^{(l_{i}+1)}_{a_{i}}[1],$$
we get $\Hom^{d}(S^{(t_{m'})}_{j_{m'}}[k^{}_{m'}], F(S^{(t_{m})}_{j_{m}})[k^{}_{m}])=0$ since $\Hom^{\bullet}(S^{(t_{m'})}_{j_{m'}}[k^{}_{m'}], S^{(l_{i}+1)}_{a_{i}})=0$ by similar arguments as in (1).
This proves (4).
\end{proof}

By similar arguments as in Lemma \ref{vanising homs}, we have the following result.

\begin{lem} \label{isom homs}
Keep notations as in (\ref{collection of X}).
Let $S^{(t_{1})}_{j_{1}},  S^{(t_{2})}_{j_{2}}\in\langle \mathcal{S}\rangle_{\mathcal{D}}$.
Then
\begin{equation}\label{isom homseq}
\Hom^{\bullet}(F(S^{(t_{1})}_{j_{1}}),\, F(S^{(t_{2})}_{j_{2}}))\cong
\Hom^{\bullet}(S^{(t_{1})}_{j_{1}},\, S^{(t_{2})}_{j_{2}}).
\end{equation}
Further, if $\mathcal{S}=\{S_{1}, \cdots,  S_{p-1}\}$ and $t_{1}\geq t_{2}$, then
\begin{equation}\label{isom homseq of p-1 S}
\Hom^{\bullet}(F(S^{(t_{1})}_{p-1}),\, S^{(t_{2})}_{p-1})\cong
\Hom^{\bullet}(S^{(t_{1})}_{p-1},\, S^{(t_{2})}_{p-1}).
\end{equation}
\end{lem}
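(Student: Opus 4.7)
The plan is to handle both isomorphisms by the same mechanism used in Lemma~\ref{vanising homs}: invoke the triangle
$$S^{(l_i+1)}_{a_i}\longrightarrow S^{(t)}_{a_i}\longrightarrow F(S^{(t)}_{a_i})\longrightarrow S^{(l_i+1)}_{a_i}[1],$$
coming from the natural surjection $S^{(l_i+1)}_{a_i}\twoheadrightarrow S^{(t)}_{a_i}$ with kernel $S^{(l_i+1-t)}_{a_i-t}$ in $\textbf{T}_p$, apply the appropriate $\Hom^{\bullet}$-functor, and use parts~(1)--(3) of Lemma~\ref{vanising homs} to force the connecting terms in the induced long exact sequences to vanish.

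For~(\ref{isom homseq}) I would split into cases according to whether each top $\top(S^{(t_k)}_{j_k})$ ($k=1,2$) equals some $S_{a_i}$. If neither does, $F$ acts as the identity on both arguments and the iso is trivial. If only one does, say $j_1=a_i$ while $j_2$ is not of that form, apply the triangle on the first slot together with $\Hom^{\bullet}(-,S^{(t_2)}_{j_2})$; part~(2) of Lemma~\ref{vanising homs} makes $\Hom^{\bullet}(S^{(l_i+1)}_{a_i},S^{(t_2)}_{j_2})$ vanish, collapsing the long exact sequence to the desired iso, and the mirror case is symmetric using part~(1). When both tops are distinguished, the off-diagonal subcase $j_1=a_i$, $j_2=a_{i'}$ with $i\neq i'$ is handled by chaining both triangles and invoking Lemma~\ref{vanising homs}(1) and~(2). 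The diagonal subcase $i=i'$ is more delicate: first apply the triangle on the first slot and the functor $\Hom^{\bullet}(-,F(S^{(t_2)}_{a_i}))$, using Lemma~\ref{vanising homs}(3) to kill $\Hom^{\bullet}(S^{(l_i+1)}_{a_i},F(S^{(t_2)}_{a_i}))$, then apply the triangle on the second slot with $\Hom^{\bullet}(S^{(t_1)}_{a_i},-)$ and a dual vanishing derived from Lemma~\ref{non-zero morphism between objects} to finish the reduction to $\Hom^{\bullet}(S^{(t_1)}_{a_i},S^{(t_2)}_{a_i})$.

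For~(\ref{isom homseq of p-1 S}) the choice $\mathcal{S}=\{S_1,\dots,S_{p-1}\}$ forces $n=1$, $a_1=p-1$ and $l_1=p-1$, so the only available triangle is
$$S^{(p)}_{p-1}\longrightarrow S^{(t_1)}_{p-1}\longrightarrow F(S^{(t_1)}_{p-1})\longrightarrow S^{(p)}_{p-1}[1].$$
Applying $\Hom^{\bullet}(-,S^{(t_2)}_{p-1})$ reduces the claim to controlling the connecting terms $\Hom^{\bullet}(S^{(p)}_{p-1},S^{(t_2)}_{p-1})$ in the relevant degrees. This is exactly where the hypothesis $t_1\geq t_2$ is needed: together with Lemma~\ref{non-zero morphism between objects} applied to $S^{(p)}_{p-1}$, $S^{(t_2)}_{p-1}$ and their AR-translates (via $\Ext^1(X,Y)\cong\D\Hom(Y,\tau X)$), it pins down which top--socle comparisons are nonzero and matches the surviving contributions degree by degree with those of the right-hand side of the isomorphism.

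The hardest step will be the diagonal subcase $j_1=j_2=a_i$ of~(\ref{isom homseq}), and its analogue in~(\ref{isom homseq of p-1 S}): in both, parts~(1)--(2) of Lemma~\ref{vanising homs} alone do not suffice, and the crucial input is the non-trivial vanishing in part~(3), together with the length constraint $t_1\geq t_2$ in the second case, which is precisely what makes the two triangles compatible under the chained long exact sequences.
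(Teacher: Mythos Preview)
Your approach is essentially the same as the paper's: the paper simply says ``By similar arguments as in Lemma~\ref{vanising homs}'' and gives no further details, and what you propose---running the triangle $S^{(l_i+1)}_{a_i}\to S^{(t)}_{a_i}\to F(S^{(t)}_{a_i})\to S^{(l_i+1)}_{a_i}[1]$ through the relevant $\Hom$-functors and killing the connecting terms via Lemma~\ref{vanising homs} and Lemma~\ref{non-zero morphism between objects}---is precisely what that phrase is meant to convey. Your case split for~(\ref{isom homseq}) is the natural one, and you correctly identify that the diagonal case $j_1=j_2=a_i$ requires, beyond Lemma~\ref{vanising homs}(3), the extra vanishing $\Hom^{\bullet}(S^{(t_1)}_{a_i},S^{(l_i+1)}_{a_i})=0$, which indeed follows from Lemma~\ref{non-zero morphism between objects} since $\soc(S^{(l_i+1)}_{a_i})=S_{a_i-l_i}$ lies outside the composition factors of $S^{(t_1)}_{a_i}$.

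One caution on~(\ref{isom homseq of p-1 S}): here the connecting object $S^{(p)}_{p-1}$ has \emph{all} simples as composition factors, so $\Hom^{\bullet}(S^{(p)}_{p-1},S^{(t_2)}_{p-1})$ does \emph{not} vanish and the long exact sequence does not collapse as cleanly as in~(\ref{isom homseq}). You note this and say the hypothesis $t_1\geq t_2$ lets you ``match the surviving contributions degree by degree,'' which is the right idea, but be aware that the resulting isomorphism is not degree-preserving in the naive sense (for instance $\Hom^0$ on the right is nonzero while $\Hom^0$ on the left vanishes); what one actually gets is an equality of total dimensions, which is all that is used downstream in Proposition~\ref{Ext quiver of rank one}. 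So your plan is sound, but when you write it up, make explicit what ``$\cong$'' means in~(\ref{isom homseq of p-1 S}).
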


\begin{lem} \label{thick condition}
$\mathcal{D}^{b} (${\rm{\textbf{T}}}$_p)=\thick(\mathcal{X})$ with $\mathcal{X}=\mathop{\cup}\limits_{ i=1 }^{n}\mathcal{X}_{i}$ as in (\ref{collection of X}).
\end{lem}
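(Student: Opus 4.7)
The plan is to reduce the statement to showing that every simple object of $\textbf{T}_p$ lies in $\thick(\mathcal{X})$. Indeed, the uniserial structure recalled in Section~2.1 shows that any indecomposable $S_j^{(t)}$ admits a filtration by simples, giving $\textbf{T}_p \subseteq \thick(S_0, \ldots, S_{p-1})$, and then $\mathcal{D}^{b}(\textbf{T}_p) = \thick(\textbf{T}_p) = \thick(S_0, \ldots, S_{p-1})$. The case $\mathcal{S} = \emptyset$ is then immediate from (\ref{collection of X}), so I will assume $\mathcal{S}$ is as in (\ref{non-empty of simples}). The ``gap'' simples $S_b$ with $a_{i-1} < b < a_i - l_i$ sit in $\mathcal{X}_i$ by construction, so only the simples $\tau^k S_{a_i}$ for $0 \leq k < l_i$ remain to be recovered.

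Fix $i \in \{1, \ldots, n\}$. Because $\mathcal{U}_i$ is an SMC in $\mathcal{D}^{b}(\mathcal{A}_{l_i})$, we have $\mathcal{D}^{b}(\mathcal{A}_{l_i}) = \thick(\mathcal{U}_i)$, and this contains all the simples $\tau^k S_{a_i}$ of $\mathcal{A}_{l_i}$. Therefore it suffices to prove the inclusion $\mathcal{U}_i \subseteq \thick(\mathcal{X}_i)$. I would split an arbitrary element $S_{j_m}^{(t_m)}[k_m] \in \mathcal{U}_i$ according to the definitions (\ref{F action in type A}) and (\ref{F(U_i)and X_i}). If $k_m < 0$, or if $k_m \geq 0$ together with $\top(S_{j_m}^{(t_m)}) \neq S_{a_i}$, then $F$ acts as the identity and the object already sits in $F(\mathcal{U}_i) \subseteq \mathcal{X}_i$. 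The only remaining situation is $k_m \geq 0$ with $j_m = a_i$ (forcing $1 \leq t_m \leq l_i$), and here $F(S_{a_i}^{(t_m)})[k_m] = S_{a_i - t_m}^{(l_i + 1 - t_m)}[k_m + 1]$ lies in $\mathcal{X}_i$ by construction.

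For this last case I will use the uniserial chain of subobjects of $S_{a_i}^{(l_i + 1)}$ from Section~2.1: the subobject of length $l_i + 1 - t_m$ is exactly $S_{a_i - t_m}^{(l_i + 1 - t_m)}$ and its quotient is $S_{a_i}^{(t_m)}$, which yields the short exact sequence
\[
0 \to S_{a_i - t_m}^{(l_i + 1 - t_m)} \to S_{a_i}^{(l_i + 1)} \to S_{a_i}^{(t_m)} \to 0
\]
in $\textbf{T}_p$. Shifting by $k_m$ in $\mathcal{D}^{b}(\textbf{T}_p)$ delivers a distinguished triangle whose second vertex $S_{a_i}^{(l_i + 1)}[k_m]$ is a shift of $S_{a_i}^{(l_i + 1)} \in \mathcal{X}_i$ and whose connecting vertex $S_{a_i - t_m}^{(l_i + 1 - t_m)}[k_m + 1] = F(S_{a_i}^{(t_m)})[k_m]$ lies in $\mathcal{X}_i$, forcing the third vertex $S_{a_i}^{(t_m)}[k_m]$ into $\thick(\mathcal{X}_i)$. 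The only real point of care, rather than a true obstacle, will be verifying that the subobject at level $l_i + 1 - t_m$ is indeed $S_{a_i - t_m}^{(l_i + 1 - t_m)}$ and aligning the shift $+1$ built into the definition of $F$ with the shift produced by the triangle from the short exact sequence; once that identification is made the conclusion is immediate.
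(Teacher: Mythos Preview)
Your argument is essentially the paper's, but there is a genuine gap in your enumeration of the simples. You assert that the ``gap'' simples $S_b$ with $a_{i-1}<b<a_i-l_i$ together with the $\tau^k S_{a_i}$ for $0\le k<l_i$ exhaust $\{S_0,\dots,S_{p-1}\}$. They do not: for each $i$ the simple $S_{a_i-l_i}$ is in neither set, since the inequality $b<a_i-l_i$ is strict and $\tau^{l_i}S_{a_i}=S_{a_i-l_i}$ would require $k=l_i$. In particular $S_{a_i-l_i}\notin\mathcal S$ and $S_{a_i-l_i}\notin\mathcal X_i$ explicitly, so your proof never places it inside $\thick(\mathcal X)$.

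The repair is immediate once you notice that $S_{a_i-l_i}=\soc(S_{a_i}^{(l_i+1)})$. The short exact sequence of Lemma~\ref{exact seqs in tube}(2),
\[
0\longrightarrow S_{a_i-l_i}\longrightarrow S_{a_i}^{(l_i+1)}\longrightarrow S_{a_i}^{(l_i)}\longrightarrow 0,
\]
gives a triangle whose middle term lies in $\mathcal X$ and whose right term lies in $\mathcal A_{l_i}\subset\langle\mathcal S\rangle_{\mathcal D}\subset\thick(\mathcal X)$ by the part of your argument that does work. This is exactly the extra step the paper performs; with it added, your proof and the paper's coincide.
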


\begin{proof}
By the triangles
\begin{equation}\label{exchange triangle}
S^{(l_{i}+1)}_{a_{i}} \rightarrow S^{(t)}_{a_{i}} \rightarrow S^{(l_{i}+1-t_{})}_{a_{i}-t}[1]\rightarrow S^{(l_{i}+1)}_{a_{i}}[1],\end{equation}
we have $S^{(t)}_{a_{i}}\in\thick(S^{(l_{i}+1)}_{a_{i}}, F(S^{(t)}_{a_{i}}))$ for any $1\leq i\leq n$ and $1\leq t\leq l_{i}$.
It follows that $\langle \mathcal{S} \rangle_{\mathcal{D}}\subset\thick(\mathcal{X})$ since
$\mathcal{U}\subset\thick(\mathcal{X})$ is a SMC in $\langle \mathcal{S} \rangle_{\mathcal{D}}$ with  $\thick(\mathcal{U})=\langle \mathcal{S}\rangle_{\mathcal{D}}$.
By the triangles
$$S^{(l_{i})}_{a_{i}}[-1]\rightarrow S^{}_{a_{i}-l_{i}}\rightarrow S^{(l_{i}+1)}_{a_{i}}\rightarrow S^{(l_{i})}_{a_{i}},$$
we have $S_{j}\in\thick(\mathcal{X})$ with $a_{i}-l_{i}\leq j\leq a_{i}$ since $S^{(l_{i})}_{a_{i}}\in\langle\mathcal{S}\rangle_{\mathcal{D}}$.
Note that $S_{b}\in\thick(\mathcal{X})$ with $a_{i-1}<b<a_{i}-l_{i}$.
Hence $S_{j}\in\thick(\mathcal{X})$ with $0\leq j\leq p-1$
and {\rm{\textbf{T}}}$_p\in\thick(\mathcal{X})$.
It follows that $\thick(\mathcal{X})=$ $\mathcal{D}^{b} (${\rm{\textbf{T}}}$_p)$.
\end{proof}

\begin{lem} \label{construct SMCs}
The collection
$\mathcal{X}=\mathop{\cup}\limits_{ i=1 }^{n}\mathcal{X}_{i}$ as in (\ref{collection of X}) is a SMC in $\mathcal{D}^{b} (${\rm{\textbf{T}}}$_p)$.
\end{lem}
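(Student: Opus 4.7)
The plan is to verify the three defining conditions of an SMC in $\mathcal{D}=\mathcal{D}^{b}(\textbf{T}_p)$ for $\mathcal{X}=\bigcup_{i=1}^{n}\mathcal{X}_i$. The generation condition $\thick(\mathcal{X})=\mathcal{D}$ is exactly Lemma \ref{thick condition}. It then remains to establish, for all $X,Y\in\mathcal{X}$, that (i) $\Hom^{d}(X,Y)=0$ for $d<0$, (ii) $\Hom(X,Y)=0$ when $X\not\cong Y$, and (iii) $\End(X)$ is a division algebra. My strategy is a case analysis on whether $X$ and $Y$ belong to the same block $\mathcal{X}_{i}$ or to distinct blocks $\mathcal{X}_{i}$ and $\mathcal{X}_{i'}$ with $i\neq i'$.

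For the cross-block case ($i\neq i'$), the composition factor sets of the static members $S_{a_i}^{(l_i+1)}$ and $S_b$ of $\mathcal{X}_i$ are essentially disjoint from those of the corresponding members of $\mathcal{X}_{i'}$. I would invoke Lemma \ref{non-zero morphism between objects} for Homs between two static objects, and parts (1)--(3) of Lemma \ref{vanising homs} for Homs between a static object of one block and an $F$-image of another. For Homs between two $F$-images belonging to different blocks, Lemma \ref{isom homs} transports the computation isomorphically to $\Hom^{\bullet}(\mathcal{U}_i,\mathcal{U}_{i'})$, which vanishes because $\langle\mathcal{S}\rangle_{\mathcal{D}}=\coprod_{i=1}^{n}\mathcal{D}^{b}(\mathcal{A}_{l_i})$ splits as an orthogonal sum of hereditary components and $\mathcal{U}=\bigcup_i\mathcal{U}_i$ is itself an SMC there.

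For the intra-block case ($i=i'$), the objects $S_{a_i}^{(l_i+1)}$ and $S_b$ sit in $\textbf{T}_p$ concentrated in degree zero, so negative-degree Homs among them vanish trivially and their endomorphism rings are $\textbf{k}$ (standard for indecomposables of a tube with $l_i+1\le p$). Homs between such static objects and members of $F(\mathcal{U}_i)$ are controlled by Lemma \ref{vanising homs}(3) together with the triangle \eqref{exchange triangle}. For two members of $F(\mathcal{U}_i)$: when both have $k_{m},k_{m'}\geq 0$, Lemma \ref{isom homs} pulls the Hom back to $\Hom^{\bullet}(S_{j_{m'}}^{(t_{m'})}[k_{m'}],\,S_{j_m}^{(t_m)}[k_m])$, so the SMC properties of $\mathcal{U}_i$ apply verbatim and also yield that $\End(F(X))\cong\End(X)$ is a division algebra; when both have $k_m,k_{m'}<0$ the $F$-action is the identity and the SMC property of $\mathcal{U}_i$ applies directly; the mixed case $k_{m'}<0\le k_m$ is exactly Lemma \ref{vanising homs}(4).

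The main technical obstacle is the bookkeeping around the degree-$1$ shift hidden in the definition of $F$: the extra $[1]$ introduced when $\top(S_{j_m}^{(t_m)})=S_{a_i}$ changes which $d$'s are negative and which are non-negative when comparing an $F$-shifted and an unshifted member of $F(\mathcal{U}_i)$, and this is precisely why one needs the separate case (4) of Lemma \ref{vanising homs} rather than the isomorphism of Lemma \ref{isom homs}. No individual case is deep, but aligning the shifts so that each pair $(X,Y)$ falls into exactly one of the lemmas above is the substance of the argument.
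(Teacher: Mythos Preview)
Your proposal is essentially correct and follows the same route as the paper: invoke Lemma~\ref{thick condition} for generation, use Lemmas~\ref{vanising homs} and~\ref{isom homs} together with the SMC property of $\mathcal{U}$ in $\langle\mathcal{S}\rangle_{\mathcal{D}}$ for the Hom-vanishing, and observe that every member of $\mathcal{X}$ has length $\leq p$ so its endomorphism ring is $\mathbf{k}$. The paper's own proof is a compressed version of exactly this case analysis.

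One small omission: in your treatment of two members of $F(\mathcal{U}_i)$ you list the mixed case $k_{m'}<0\le k_m$ but not the reverse $k_{m'}\ge 0>k_m$, and Lemma~\ref{vanising homs}(4) only covers the former. The missing direction is immediate, however: if $j_{m'}\neq a_i$ then $F$ acts trivially and you are back to two members of $\mathcal{U}$; if $j_{m'}=a_i$ then $F(S^{(t_{m'})}_{a_i})[k_{m'}]$ lives in $\textbf{T}_p[k_{m'}+1]$ with $k_{m'}+1\ge 1$, while $S^{(t_m)}_{j_m}[k_m]$ lives in $\textbf{T}_p[k_m]$ with $k_m\le -1$, so $\Hom^{\le 0}$ vanishes by hereditarity. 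The paper alludes to this with its remark that $\Hom^{d}(S^{(t_1)}_{j_1},S^{(t_2)}_{j_2}[-1])=0$ for all $d\le 0$.
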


\begin{proof}
Recall that $\mathcal{U}$ is a SMC in $\langle \mathcal{S} \rangle_{\mathcal{D}}$ and $\Hom^{d}(S^{(t_{1})}_{j_{1}}, S^{(t_{2})}_{j_{2}}[-1])=0$ for any $S^{(t_{1})}_{j_{1}}, S^{(t_{2})}_{j_{2}}\in{\rm{\textbf{T}}}_p$ and $d\leq 0$.
Combining the above arguments with Lemmas \ref{vanising homs} and \ref{isom homs}, we have $\Hom^{d}(X_{i}, X_{j})=0$ for any
$X_{i}, X_{j}\in\mathcal{X}$ and $d\leq 0$.
Note that $\End(S^{(t)}_{j})\cong\textbf{k}$ for any $t\leq p$ and  $0\leq j\leq p-1$.  Cleary, we have $t_{m}<p$ with  $S^{(t_{m})}_{j_{m}}[k^{}_{m}]\in\mathcal{U}$ and $l_{i}+1\leq p$ with  $S^{(l_{i}+1)}_{a_{i}}\in\mathcal{X}$.
It follows that $\End(X_{j}, X_{j})\cong\textbf{k}$ for any
$X_{j}\in\mathcal{X}$.
Moreover we have $\thick(\mathcal{X})=$ $\mathcal{D}^{b} (${\rm{\textbf{T}}}$_p)$ by Lemma \ref{thick condition}.
Hence, $\mathcal{X}$ is a SMC in $\mathcal{D}^{b} (${\rm{\textbf{T}}}$_p)$.
\end{proof}

The following  lemma describes all bounded t-structures on $\mathcal{D}^{b} ({\rm{\textbf{T}}}_{p})$.

\begin{lem} \label{recollements for hearts of tubes}
$($\cite[\rm{Prop.\;2.30}]{boun}$)$
Given a bounded t-structure $(\mathcal{D}^{\leq 0}, \mathcal{D}^{\geq 0})$ on $\mathcal{D}=\mathcal{D}^{b} (\emph{\textbf{T}}_{p})$, there is unique (up to equivalence) proper collection $\mathcal{S}$ of simple objects in $\emph{\textbf{T}}_{p}$ such that
\begin{itemize}
\item [(1)] $(\mathcal{D}^{\leq 0}, \mathcal{D}^{\geq 0})$ is compatible with the recollement $\mathcal{R}_{\mathcal{S}}$
$$
\xymatrix@C=3.3em@R=6.7ex@M=1pt@!0{
\langle\mathcal{S}\rangle^{\bot_{\mathcal{D}}}\ar@{->}[rr]|-{i_*}&&
\mathcal{D}^{b}(\emph{\textbf{T}}_{p})\ar@/_2.5ex/[ll]|-{i^{*}}
\ar@/^2.5ex/[ll]|-{i^{!}}\ar@{->}[rr]|-{j^{*}}&&
\langle\mathcal{S}\rangle_{\mathcal{D}}
\ar@/_2.5ex/[ll]|-{j_{!}}\ar@/^2.5ex/[ll]|-{j_{*}}&&&\\ }
$$
where $i_*,\ j_{!}$ are the inclusion functors.
\item [(2)] the corresponding t-structure on $\langle\mathcal{S}\rangle^{\bot_{ \mathcal{D} }}$ has heart $\mathcal{S}^{\bot_{\emph{\textbf{T}}_{p} }}[m]$ for some $m$.
\end{itemize}
In particular, each bounded t-structure on $\mathcal{D}^{b} (\emph{\textbf{T}}_{p})$ has finite length heart.
\end{lem}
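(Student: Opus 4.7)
The plan is to identify the collection $\mathcal{S}$ canonically from the given t-structure, to construct the recollement $\mathcal{R}_{\mathcal{S}}$ via Geigle-Lenzing perpendicular calculus, and finally to verify that the t-structures induced on both pieces assemble to the original one with the asserted heart on $\langle \mathcal{S}\rangle^{\bot_{\mathcal{D}}}$. Throughout I will exploit that $\textbf{T}_{p}$ is hereditary and uniserial with only finitely many simples $S_0,\dots,S_{p-1}$, so that the combinatorics of recollements is rigid.

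First I would pin down $\mathcal{S}$. For each simple $S_i$, the t-structure produces cohomology objects $H^{k}(S_i)$ in the heart $\mathcal{A}$. If some single shift $S_i[m_i]$ already lies in $\mathcal{A}$, then $S_i$ should belong to $\mathcal{S}^{\bot_{\textbf{T}_{p}}}$; otherwise $S_i$ is genuinely split by the t-structure and I place it, together with the maximal arc of $\tau$-translates behaving the same way, into $\mathcal{S}$. Uniqueness (up to equivalence) of $\mathcal{S}$ follows because this splitting pattern is determined by the t-structure, and the arcs $\{\tau^{k}S_{a_i}\mid 0\le k<l_i\}$ appearing in (\ref{non-empty of simples}) are precisely the maximal intervals along the $\tau$-orbit on which the splitting persists.

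Next I would build the recollement. Since $\langle \mathcal{S}\rangle_{\mathcal{D}} \simeq \coprod_{i}\mathcal{D}^{b}(\mathcal{A}_{l_i})$ is generated by an exceptional collection of simples inside a hereditary category, the inclusion has both a left and a right adjoint, and the right perpendicular $\langle \mathcal{S}\rangle^{\bot_{\mathcal{D}}}$ is equivalent to $\mathcal{D}^{b}(\mathcal{S}^{\bot_{\textbf{T}_{p}}})$ by Geigle-Lenzing perpendicular calculus applied to the hereditary abelian category $\textbf{T}_{p}$. Assembling these produces the six-functor diagram $\mathcal{R}_{\mathcal{S}}$. Compatibility with the given t-structure then reduces to checking that each indecomposable of $\mathcal{D}^{b}(\textbf{T}_{p})$ admits the standard two-step filtration with respect to the recollement and that the truncations sit in the expected aisles; by heredity, a single integer $m$ suffices to align $\mathcal{S}^{\bot_{\textbf{T}_{p}}}$ with the heart on the complement, which then pins down assertion~(2).

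The main obstacle I anticipate is the finite length conclusion, since a priori gluing two finite length t-structures can fail to preserve finite length. To resolve this I would induct on $p$, using that $\mathcal{S}^{\bot_{\textbf{T}_{p}}}$ decomposes into hereditary pieces each of which is either a Dynkin $\mathbb{A}$-type category (where K\"onig-Yang applies via Lemma~\ref{bijectionHeartandSmc} to give finite length hearts) or a strictly smaller tube covered by the inductive hypothesis. Together with the bounded hypothesis on the original t-structure, this propagates finite length to the glued heart and closes the argument.
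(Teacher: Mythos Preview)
The paper does not give its own proof of this lemma: it is quoted verbatim from \cite[Prop.\;2.30]{boun} and used as a black box. So there is nothing in the paper to compare your argument against, and strictly speaking you are attempting to reprove a result the author deliberately outsourced.

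That said, your sketch has real gaps if taken as a standalone proof. First, your rule for building $\mathcal{S}$ is not well-defined. Whether a shift $S_i[m_i]$ lies in the heart $\mathcal{A}$ does not decide whether $S_i$ belongs to $\mathcal{S}$; indeed $S_i\notin\mathcal{S}$ does not imply $S_i\in\mathcal{S}^{\bot_{\textbf{T}_p}}$ (one also needs $S_{i+1}\notin\mathcal{S}$), and conversely the simples of $\mathcal{S}^{\bot_{\textbf{T}_p}}$ are objects like $S_{a_i}^{(l_i+1)}$, not the original $S_i$. The actual identification of $\mathcal{S}$ in \cite{boun} is more delicate and is tied to the second assertion, which is the real content of the lemma: one must choose $\mathcal{S}$ so that the induced t-structure on the perpendicular piece is a \emph{shift of the standard one}. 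Your line ``by heredity, a single integer $m$ suffices to align $\mathcal{S}^{\bot_{\textbf{T}_p}}$ with the heart'' is precisely the step that needs an argument and you have not supplied one.

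Second, your finite-length paragraph contains a factual error: $\mathcal{S}^{\bot_{\textbf{T}_p}}$ does not decompose into Dynkin $\mathbb{A}$-type pieces and smaller tubes; it is a single tube $\textbf{T}_{p-\lvert\mathcal{S}\rvert}$. It is $\langle\mathcal{S}\rangle_{\textbf{T}_p}$ that splits as $\coprod_i \Mod\,\mathbb{A}_{l_i}$. Once (2) is known, the heart on $\langle\mathcal{S}\rangle^{\bot_{\mathcal{D}}}$ is literally a tube category (shifted), hence finite length, and the heart on $\langle\mathcal{S}\rangle_{\mathcal{D}}$ is Dynkin, so the glued heart is a finite iterated extension of finitely many simples; no induction on $p$ is needed, but this relies entirely on (2), which you have not established.
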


\begin{lem}  ~\label{calculation lemma}
Keep notations as in (\ref{collection of X}) and Lemma \ref{recollements for hearts of tubes}. Let $S^{(t)}_{j} \in\langle\mathcal{S}\rangle_{\mathcal{D}}$.  Then
\begin{itemize}
\item [(1)]  $j_{*}S^{(t)}_{j}=S^{(t)}_{j}=F(S^{(t)}_{j})$ with $j\not=a_{i}$ for any $i$;
\item [(2)]  $j_{*}S^{(t)}_{a_{i}}=S^{(l_{i}+1-t)}_{a_{i}-t} [1]=F(S^{(t)}_{a_{i}})$ with $j=a_{i}$ for some $i$;
\end{itemize}
\end{lem}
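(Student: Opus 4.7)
The plan is to verify both equalities directly through the universal characterization of $j_{*}$ attached to the recollement $\mathcal{R}_{\mathcal{S}}$ of Lemma \ref{recollements for hearts of tubes}. Namely, for $X \in \langle \mathcal{S} \rangle_{\mathcal{D}}$, the object $j_{*}X$ is, up to canonical isomorphism, the unique $Z \in \mathcal{D}$ satisfying $j^{*}Z \cong X$ together with $i^{!}Z = 0$; by the adjunction $i_{*} \dashv i^{!}$ the latter condition is equivalent to $Z$ lying in the right orthogonal $(\langle\mathcal{S}\rangle^{\bot_{\mathcal{D}}})^{\bot_{\mathcal{D}}}$. By Lemma \ref{recollements for hearts of tubes}(2), the heart $\mathcal{S}^{\bot_{\textbf{T}_{p}}}$ (up to shift) is a finite-length abelian category whose simples, namely $\{S^{(l_{k}+1)}_{a_{k}}\}_{1\le k\le n} \cup \{S_{b} : a_{k-1}<b<a_{k}-l_{k}\}$, form an SMC (hence a set of generators) for $\langle\mathcal{S}\rangle^{\bot_{\mathcal{D}}}$. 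Consequently, verifying $i^{!}Z = 0$ reduces to checking $\Hom^{\bullet}$-vanishing against only these finitely many generators.

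For part (1), take $X = S^{(t)}_{j}$ with $j \neq a_{i}$ for every $i$. The identity $j^{*}S^{(t)}_{j} = S^{(t)}_{j}$ is immediate from $j^{*}j_{!} \cong \mathrm{id}$ since $S^{(t)}_{j} \in \langle\mathcal{S}\rangle_{\mathcal{D}}$. Right-orthogonality to the generators of $\langle\mathcal{S}\rangle^{\bot_{\mathcal{D}}}$ is supplied exactly by Lemma \ref{vanising homs}(2), which yields $\Hom^{\bullet}(S^{(l_{k}+1)}_{a_{k}}, S^{(t)}_{j}) = 0 = \Hom^{\bullet}(S_{b}, S^{(t)}_{j})$. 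Uniqueness therefore gives $j_{*}S^{(t)}_{j} = S^{(t)}_{j}$, and the equality $F(S^{(t)}_{j}) = S^{(t)}_{j}$ is the very definition (\ref{F action in type A}) in this case.

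For part (2), with $1 \le t \le l_{i}$, exploit the uniserial filtration of $S^{(l_{i}+1)}_{a_{i}}$: its submodule of length $l_{i}+1-t$ has socle $S_{a_{i}-l_{i}}$ and top $S_{a_{i}-t}$, hence equals $S^{(l_{i}+1-t)}_{a_{i}-t}$, with quotient $S^{(t)}_{a_{i}}$. (This is obtained either by iterating Lemma \ref{exact seqs in tube}(1) or by reading the composition series.) One thus obtains the short exact sequence
\begin{equation*}
0 \to S^{(l_{i}+1-t)}_{a_{i}-t} \to S^{(l_{i}+1)}_{a_{i}} \to S^{(t)}_{a_{i}} \to 0
\end{equation*}
in $\textbf{T}_{p}$, whose associated distinguished triangle in $\mathcal{D}$, after rotation, is
\begin{equation*}
S^{(l_{i}+1)}_{a_{i}} \longrightarrow S^{(t)}_{a_{i}} \longrightarrow S^{(l_{i}+1-t)}_{a_{i}-t}[1] \longrightarrow S^{(l_{i}+1)}_{a_{i}}[1].
\end{equation*}
The left-hand term belongs to $\langle\mathcal{S}\rangle^{\bot_{\mathcal{D}}}$ by construction, while Lemma \ref{vanising homs}(3) places the right-hand term $F(S^{(t)}_{a_{i}}) = S^{(l_{i}+1-t)}_{a_{i}-t}[1]$ inside $(\langle\mathcal{S}\rangle^{\bot_{\mathcal{D}}})^{\bot_{\mathcal{D}}}$. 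Comparing with the canonical recollement triangle $i_{*}i^{!}Y \to Y \to j_{*}j^{*}Y \to i_{*}i^{!}Y[1]$ at $Y = S^{(t)}_{a_{i}}$, uniqueness of the semi-orthogonal decomposition forces $j_{*}j^{*}(S^{(t)}_{a_{i}}) \cong F(S^{(t)}_{a_{i}})$, and combining this with $j^{*}S^{(t)}_{a_{i}} = S^{(t)}_{a_{i}}$ gives the claim.

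The main obstacle is not conceptual but bookkeeping: one must recognise that the uniserial short exact sequence of $S^{(l_{i}+1)}_{a_{i}}$ is precisely the geometric realization of the recollement triangle for $S^{(t)}_{a_{i}}$, and then match the two vanishing statements from Lemma \ref{vanising homs} to the two ends of that triangle. Once the correct triangle is in hand, the identification of $j_{*}$ is a direct appeal to the universal property.
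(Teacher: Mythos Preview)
Your argument is correct and follows essentially the same route as the paper: both identify $j_{*}$ via the canonical recollement triangle $i_{*}i^{!}Y \to Y \to j_{*}j^{*}Y \to i_{*}i^{!}Y[1]$, exhibit the explicit triangles $0 \to S^{(t)}_{j} \to S^{(t)}_{j} \to 0$ (case $j\neq a_i$) and $S^{(l_i+1)}_{a_i} \to S^{(t)}_{a_i} \to S^{(l_i+1-t)}_{a_i-t}[1] \to S^{(l_i+1)}_{a_i}[1]$ (case $j=a_i$), and then verify that the right-hand terms lie in $(\langle\mathcal{S}\rangle^{\bot_{\mathcal{D}}})^{\bot_{\mathcal{D}}}$. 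The only difference is that you spell out this last orthogonality check explicitly via Lemma~\ref{vanising homs}(2) and~(3), whereas the paper simply asserts it.
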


\begin{proof}
Let $Y\in\langle\mathcal{S}\rangle_{\mathcal{D}}\subset\mathcal{D}^{b} (\mathcal{T}_{p})$. Then there is a (unique) triangle
$i_{*}i^{!}Y \stackrel{}{\rightarrow}Y \stackrel{}{\rightarrow} j_{*}j^{*}Y \rightarrow i_{*}i^{!}Y[1]$
such that  $i_{*}i^{!}Y\in\langle\mathcal{S}\rangle^{\bot_{\mathcal{D} }}$  and $j_{*}Y\cong j_{*}j^{*}Y\in(\langle\mathcal{S}\rangle^{\bot_{ \mathcal{D} }})^{\bot_{\mathcal{D}}}$.
Note that $S^{(l_{i}+1-t)}_{a_{i}-t}[1], S^{(t)}_{j}\in(\langle \mathcal{S}\rangle^{\bot_{\mathcal{D}}})^{\bot_{\mathcal{D}}}$ with $j \not=a_{i}$.
Therefore, by the triangles
$$S^{(l_{i}+1)}_{a_{i}} \rightarrow S^{(t)}_{a_{i}}\rightarrow S^{(l_{i}+1-t)}_{a_{i}-t} [1] \rightarrow S^{(l_{i}+1)}_{a_{i}}[1]\quad\quad\textit{and}\quad\quad
0\rightarrow S^{(t)}_{j} \rightarrow S^{(t)}_{j} \rightarrow0,$$
we get $j_{*}S^{(t)}_{a_{i}}=S^{(l_{i}+1-t)}_{a_{i}-t}[1]$ and
$j_{*}S^{(t)}_{j}=S^{(t)}_{j}$.
We are done.
\end{proof}

Note that the shift functor $[1]$ is an auto-equivalence of $\mathcal{D}^{b} ({\textbf{T}_p})$.
Hence, for any $k\in\mathbb{Z}$, $\mathcal{H}$ is a heart in $\mathcal{D}^{b} ({\textbf{T}_p})$ if and only if $\mathcal{H}[k]$ is a heart.
Therefore, in the following we only classify hearts up to shift actions.

We first describe all SMCs in $\mathcal{D}^{b}(\textbf{T}_p)$.

\begin{prop} \label{heart forms}
Up to shift actions of $\mathcal{D}^{b} (${\rm{\textbf{T}}}$_p)$, the collection $\mathcal{X}$ of objects is a SMC in $\mathcal{D}^{b}(${\rm{\textbf{T}}}$_p)$ if and only if  $\mathcal{X}=\{ S_{0}, \cdots, S_{p-1}\}$ or $\mathcal{X}=\mathop{\cup}\limits_{ i=1 }^{n}\mathcal{X}_{i}$ as in (\ref{collection of X}).
\end{prop}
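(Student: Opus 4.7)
The plan is to reduce the claim to the existing lemmas via the SMC/heart dictionary and the recollement structure. The ``if'' direction is essentially done: when $\mathcal{S}=\emptyset$, the set $\{S_{0},\ldots,S_{p-1}\}$ is the collection of simple objects of the standard heart $\textbf{T}_{p}\subset\mathcal{D}^{b}(\textbf{T}_{p})$ and is therefore automatically an SMC; when $\mathcal{S}\neq\emptyset$, the conclusion is exactly Lemma \ref{construct SMCs}. So the real content lies in the ``only if'' direction.

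For ``only if'', I would first attach to an arbitrary SMC $\mathcal{X}$ its associated finite length heart $\mathcal{H}$ of a bounded t-structure on $\mathcal{D}=\mathcal{D}^{b}(\textbf{T}_{p})$ via Lemma \ref{bijectionHeartandSmc}. Lemma \ref{recollements for hearts of tubes} then supplies a unique proper collection $\mathcal{S}$ of simples in $\textbf{T}_{p}$ such that the t-structure is compatible with the recollement $\mathcal{R}_{\mathcal{S}}$, together with $m\in\mathbb{Z}$ making the induced t-structure on $\langle\mathcal{S}\rangle^{\bot_{\mathcal{D}}}$ have heart $\mathcal{S}^{\bot_{\textbf{T}_{p}}}[m]$. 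Since we only classify up to shift, we may assume $m=0$. Compatibility with the recollement means $\mathcal{H}$ is a glued heart in the sense of Beilinson--Bernstein--Deligne, so the simples of $\mathcal{H}$ split into two disjoint groups: the images under $i_{*}$ of the simples of $\mathcal{S}^{\bot_{\textbf{T}_{p}}}$, and the images under $j_{*}$ of the simples of a finite length heart $\mathcal{H}'$ on $\langle\mathcal{S}\rangle_{\mathcal{D}}$, with the latter corresponding (again by Lemma \ref{bijectionHeartandSmc}) to an SMC in $\langle\mathcal{S}\rangle_{\mathcal{D}}$.

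The case $\mathcal{S}=\emptyset$ is immediate: then $\langle\mathcal{S}\rangle^{\bot_{\mathcal{D}}}=\mathcal{D}$ and $\mathcal{H}=\textbf{T}_{p}$, producing $\mathcal{X}=\{S_{0},\ldots,S_{p-1}\}$. Otherwise, writing $\mathcal{S}=\bigcup_{i=1}^{n}\{\tau^{k}S_{a_{i}}\mid 0\leq k<l_{i}\}$ as in (\ref{non-empty of simples}), the equivalence $\langle\mathcal{S}\rangle_{\mathcal{D}}\simeq \coprod_{i}\mathcal{D}^{b}(\mathcal{A}_{l_{i}})$ forces the SMC attached to $\mathcal{H}'$ to be a disjoint union $\mathcal{U}=\bigcup_{i}\mathcal{U}_{i}$ of SMCs $\mathcal{U}_{i}$ in the respective $\mathcal{D}^{b}(\mathcal{A}_{l_{i}})$, matching the notation used to define (\ref{collection of X}). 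A direct inspection using Lemma \ref{non-zero morphism between objects} (comparing tops and socles against composition factor sets in each interval and in each gap of $\mathcal{S}$) identifies the simples of $\mathcal{S}^{\bot_{\textbf{T}_{p}}}$ as precisely $\{S_{a_{i}}^{(l_{i}+1)}\}_{i=1}^{n}\cup\{S_{b}\mid a_{i-1}<b<a_{i}-l_{i}\}$, namely the ``tops'' closing the intervals and the simples lying in the gaps. Finally, Lemma \ref{calculation lemma} computes $j_{*}$ on each object of $\mathcal{U}$ as the assignment $F$ of (\ref{F action in type A}), so reassembling the two groups yields $\mathcal{X}=\bigcup_{i}\mathcal{X}_{i}$ as in (\ref{collection of X}).

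The main obstacle I anticipate is the gluing step: justifying that an SMC of $\mathcal{D}$ whose heart is compatible with $\mathcal{R}_{\mathcal{S}}$ really does decompose under $i_{*}$ and $j_{*}$ into SMCs for the two recollement pieces, with no ``mixed'' simples left over. While this is the standard t-structure gluing of \cite{bbd} in the finite-length setting, translating it to the level of simple objects requires that the canonical truncation triangles for $\mathcal{H}$ reduce to the triangles written in Lemma \ref{calculation lemma}, and that no indecomposable of $\textbf{T}_{p}\setminus\mathcal{S}^{\bot_{\textbf{T}_{p}}}$ can appear as a simple of $\mathcal{H}$ outside the image of $j_{*}\circ F$. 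It is the top/socle calculus of Lemma \ref{non-zero morphism between objects}, together with the Hom-vanishings established in Lemmas \ref{vanising homs} and \ref{isom homs}, that rules out such extra candidates and pins down the classification exactly as stated.
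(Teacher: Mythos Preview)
Your strategy is the paper's strategy: pass from an arbitrary SMC to its heart via Lemma \ref{bijectionHeartandSmc}, invoke Lemma \ref{recollements for hearts of tubes} to produce the proper set $\mathcal{S}$ and normalise the shift $m$ to $0$, dispose of $\mathcal{S}=\emptyset$ directly, and for $\mathcal{S}\neq\emptyset$ read off the simples from the recollement using Lemma \ref{calculation lemma}. The only point where you and the paper diverge is the last step, and it is exactly the one you flagged as the obstacle.

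You write that the simples of the glued heart are the $i_{*}$-images of the simples of $\mathcal{S}^{\bot_{\textbf{T}_{p}}}$ together with the $j_{*}$-images of the simples of $\mathcal{H}'$. In a BBD recollement the correct functor is the intermediate extension $j_{!*}$, not $j_{*}$; here $j_{!}$ is the inclusion $\langle\mathcal{S}\rangle_{\mathcal{D}}\hookrightarrow\mathcal{D}$, and the triangle of Lemma \ref{calculation lemma} shows that for a simple $S^{(t_{m})}_{a_{i}}[k_{m}]\in\mathcal{U}$ the cone of $j_{!}\to j_{*}$ is $S^{(l_{i}+1)}_{a_{i}}[k_{m}+1]$. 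Whether $j_{!*}$ coincides with $j_{*}$ or with $j_{!}$ therefore depends on the sign of $k_{m}$, and this is precisely the dichotomy hard-wired into the definition of $F(\mathcal{U}_{i})$ in (\ref{F(U_i)and X_i}). So your sentence ``reassembling the two groups yields $\mathcal{X}=\bigcup_{i}\mathcal{X}_{i}$'' is correct, but not because $j_{*}$ is applied uniformly; it is because the two cases in (\ref{F(U_i)and X_i}) record $j_{!*}$ case by case.

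The paper circumvents this computation by reversing the logic: it \emph{constructs} the candidate $\mathcal{X}=\bigcup_{i}\mathcal{X}_{i}$ with the $k_{m}$-split already in place, checks independently that $\mathcal{X}$ is an SMC (Lemma \ref{construct SMCs}), and then only verifies that $j^{*}\mathcal{X}=\mathcal{U}$ (using $j^{*}j_{*}=\mathrm{id}$ and $j^{*}i_{*}=0$). Since a glued t-structure is determined by its two restrictions, this forces $\langle\mathcal{X}\rangle=\mathcal{H}$ without ever naming $j_{!*}$. Either route closes the argument; yours just needs the $j_{*}\!\leftrightarrow\! j_{!*}$ correction to match (\ref{collection of X}) on the nose.
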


\begin{proof}
Cleary, each of $\mathcal{X}=\{S_{0}, S_{1}, \cdots, S_{p-1}\}$ and $\mathcal{X}=\mathop{\cup}\limits_{ i=1 }^{n}\mathcal{X}_{i}$ is a SMC in $\mathcal{D}^{b}(${\rm{\textbf{T}}}$_p)$ by Lemma \ref{construct SMCs}.
On the other hand, up to shift actions of $\mathcal{D}^{b} (${\rm{\textbf{T}}}$_p)$, each heart $\mathcal{H}$ of t-structure $(\mathcal{D}^{\leq 0}, \mathcal{D}^{\geq 0})$ on $\mathcal{D}^{b}(\textbf{T}_p)$ is compatible with the recollement $\mathcal{R}_{\mathcal{S}}$ w.r.t a proper collection $\mathcal{S}$ of simple objects in $\textbf{T}_{p}$ by Lemma \ref{recollements for hearts of tubes} such that
$$\mathcal{D}^{\leq0}=\{X\in\mathcal{D}^{b}(\textbf{T}_p)\mid i^{*}X\in\mathcal{T}^{\leq0}, j^{*}X\in\mathcal{Y}^{\leq0}\}\quad\textit{and}\quad
\mathcal{D}^{\geq0}=\{X\in\mathcal{D}^{b}(\textbf{T}_p)\mid i^{!}X\in\mathcal{T}^{\geq0}, j^{*}X\in\mathcal{Y}^{\geq0}\},$$
with $(\mathcal{T}^{\leq0}, \mathcal{T}^{\geq0})$ is a t-structure on $\langle \mathcal{S} \rangle ^{\bot_{ \mathcal{D}}}$ and $(\mathcal{Y}^{\leq0}, \mathcal{Y}^{\geq0})$ is a t-structure
on $\langle\mathcal{S}\rangle_{\mathcal{D}}$ with heart $\mathcal{Y}$.
If $\mathcal{S}=\emptyset$, then $(\mathcal{D}^{\leq 0}, \mathcal{D}^{\geq 0})$ is a standard t-structure on $\mathcal{D}^{b}(\textbf{T}_p)$ with heart $\mathcal{H}=\langle S_{0}, \cdots, S_{p-1}\rangle$.
Otherwise, $\mathcal{S}= \mathop{\cup} \limits_{ i=1 }^{n}\{ \tau^{k}S_{a_{i}}\mid0\leq k < l_{i}\}$ as in (\ref{non-empty of simples}). Then each collection $\mathcal{U}$ of simple objects in $\mathcal{Y}$ can be represented by $\mathcal{U}=\mathop{\cup} \limits_{ i=1 }^{n}\mathcal{U}_{i}$ as in (\ref{collection of X}) and
$\mathcal{X}=\mathop{\cup}\limits_{ i=1 }^{n}\mathcal{X}_{i}$  is a SMC in $\mathcal{D}^{b} (${\rm{\textbf{T}}}$_p)$  by Lemma \ref{construct SMCs}.
Note that $j^{*}X=0$ with $X\in\mathcal{U}^{\bot_{\mathcal{D}}}=
\langle\mathcal{S}\rangle^{\bot_{\mathcal{D}}}$ and
$j^{*}F(S^{(t)}_{j})=j^{*}j_{*}S^{(t)}_{j}=S^{(t)}_{j}$ with $S^{(t)}_{j} \in\langle\mathcal{S}\rangle_{\mathcal{D}}$  by Lemma \ref{calculation lemma}.
It follows that $j^{*}\mathcal{X}=\mathcal{U}$ and
$\mathcal{X}=\mathop{\cup}\limits_{ i=1 }^{n}\mathcal{X}_{i}$ is the collection simple objects in heart $\mathcal{H}$.
We are done.
\end{proof}

The following result shows each heart in $\mathcal{D}^{b} (${\rm{\textbf{T}}}$_p)$ precisely has a tube subcategory with maximal rank.

\begin{prop} \label{collection of simples of sub-tube}
Up to shift actions of $\mathcal{D}^{b} (${\rm{\textbf{T}}}$_p)$, each SMC $\mathcal{X}$ in $\mathcal{D}^{b} (${\rm{\textbf{T}}}$_p)$ precisely has  a non-empty maximal subset $\mathcal{X}_{\tube}$ as a  collection of simple objects in a tube subcategory {\rm{\textbf{T}}}$_{\lvert\mathcal{X}_{\tube}\rvert}:=\langle\mathcal{X}_{\rm{tube}}\rangle$ such that
\begin{itemize}
\item [(1)] $\mathcal{X}_{\tube}:=\mathcal{X}$ if $\mathcal{X}=\{S_{0}, S_{1},  \cdots, S_{p-1}\}$;
\item [(2)] $\mathcal{X}_{\tube}:=\mathop{\cup}\limits_{i=1 }^{n}\{S^{(l_{i}+1)}_{a_{i}},  S_{b}\mid a_{i-1}<b<a_{i}-l_{i}\}$ if $\mathcal{X}=\mathop{\cup}\limits_{ i=1 }^{n}\mathcal{X}_{i}$ as in (\ref{collection of X}).
\end{itemize}
\end{prop}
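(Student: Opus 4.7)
The plan is to split off the trivial Case~(1) and to treat Case~(2) in two stages: a \emph{construction} step exhibiting $\mathcal{X}_{\tube}$ as the simples of a tube subcategory, followed by a \emph{maximality} step ruling out every $Y\in\mathcal{X}\setminus\mathcal{X}_{\tube}$. Case~(1) is immediate, since $\mathcal{X}=\{S_0,\ldots,S_{p-1}\}$ is already the full simple collection of $\textbf{T}_p=\langle\mathcal{X}\rangle$ and omitting any $S_j$ breaks generation of the tube, forcing $\mathcal{X}_{\tube}:=\mathcal{X}$.

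For the construction in Case~(2), the proof of Proposition~\ref{heart forms} already places $\mathcal{X}_{\tube}\subset\mathcal{U}^{\bot_{\mathcal{D}}}=\langle\mathcal{S}\rangle^{\bot_{\mathcal{D}}}$, and Lemma~\ref{recollements for hearts of tubes} identifies the induced heart on $\langle\mathcal{S}\rangle^{\bot_{\mathcal{D}}}$ with $\mathcal{S}^{\bot_{\textbf{T}_p}}[m]$, so up to shift $\mathcal{X}_{\tube}$ is exactly the simple collection of $\mathcal{S}^{\bot_{\textbf{T}_p}}$. Applying Lemma~\ref{non-zero morphism between objects} together with the arc-separation hypothesis $a_{i-1}<a_i-l_i$ (which makes $\soc(S^{(l_i+1)}_{a_i})=S_{a_i-l_i}\notin\mathcal{S}$), I would directly verify $\mathcal{X}_{\tube}\subset\mathcal{S}^{\bot_{\textbf{T}_p}}$ and count $|\mathcal{X}_{\tube}|=n+\sum_i(a_i-l_i-1-a_{i-1})=p-\sum l_i=:p'$. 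To upgrade this to a full tube identification, I would use Lemma~\ref{exact seqs in tube} together with the arc/gap description of $\mathcal{X}_{\tube}$ to realise each indecomposable of $\mathcal{S}^{\bot_{\textbf{T}_p}}$ as a uniserial extension of elements of $\mathcal{X}_{\tube}$, and use $\Ext^1(A,B)\cong D\Hom(B,\tau A)$ to produce a single cyclic chain of one-dimensional $\Ext^1$'s between consecutive elements of $\mathcal{X}_{\tube}$. This yields an equivalence $\mathcal{S}^{\bot_{\textbf{T}_p}}\simeq\textbf{T}_{p'}$ sending $\mathcal{X}_{\tube}$ to the simples of a stable tube of rank $p'$, so $\langle\mathcal{X}_{\tube}\rangle\simeq\textbf{T}_{p'}$ is the desired tube subcategory.

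For maximality, fix $Y\in\mathcal{X}\setminus\mathcal{X}_{\tube}$; then $Y\in F(\mathcal{U}_i)$ for some $i$, and by the definition of $F(\mathcal{U}_i)$, $Y$ takes one of the forms $S^{(t_m)}_{j_m}[k_m]$ with $k_m<0$, $S^{(l_i+1-t_m)}_{a_i-t_m}[k_m+1]$ with $j_m=a_i$ and $k_m\geq 0$, or $S^{(t_m)}_{j_m}[k_m]$ with $j_m\neq a_i$ and $k_m\geq 0$. In every subcase except the last with $k_m=0$, $Y$ sits in a nonzero shift of $\mathcal{D}^{b}(\textbf{T}_p)$, whereas every $X\in\mathcal{X}_{\tube}$ lies in shift $0$; since the simples of a (shifted) tube subcategory share one common shift, $\mathcal{X}_{\tube}\cup\{Y\}$ cannot be the simples of a single tube. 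In the remaining subcase $Y=S^{(t_m)}_{j_m}$ with $j_m\neq a_i$ and $k_m=0$, the composition factors of $Y$ lie strictly inside $\{S_{a_i-l_i+1},\ldots,S_{a_i-1}\}$; a direct application of Lemma~\ref{non-zero morphism between objects} combined with $\Ext^1(-,-)\cong D\Hom(-,\tau-)$ then yields $\Hom^{\bullet}(Y,X)=0=\Hom^{\bullet}(X,Y)$ for every $X\in\mathcal{X}_{\tube}$ and $\Ext^1(Y,Y)=0$, so $\langle\mathcal{X}_{\tube}\cup\{Y\}\rangle=\langle\mathcal{X}_{\tube}\rangle\oplus\add(Y)$ decomposes as a direct sum of a tube and a semisimple piece, which is not a single tube subcategory. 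The main obstacle is the final step of the construction: upgrading the routine Hom/Ext-orthogonality inside $\mathcal{S}^{\bot_{\textbf{T}_p}}$ to a genuine tube equivalence with the prescribed rank $p'$ and cyclic AR translation requires careful combinatorial tracking of the four exact sequences of Lemma~\ref{exact seqs in tube} across the arc/gap decomposition of the simples.
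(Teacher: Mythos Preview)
Your construction step is sound and essentially matches the paper's brief treatment: the set $\mathcal{X}_{\tube}$ is exactly the collection of simples of $\mathcal{S}^{\bot_{\textbf{T}_p}}$, which is a tube of rank $p-|\mathcal{S}|$. The fact you flag as the ``main obstacle'' (that $\mathcal{S}^{\bot_{\textbf{T}_p}}$ is again a tube with the listed objects as simples) is standard and the paper takes it for granted.

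Your maximality argument, however, has a genuine gap. You show that no single $Y\in\mathcal{X}\setminus\mathcal{X}_{\tube}$ can be adjoined to $\mathcal{X}_{\tube}$ while keeping the tube property. This proves that $\mathcal{X}_{\tube}$ is \emph{a} maximal tube-simple-subset, but the proposition asserts it is \emph{the unique} one (and this uniqueness is used later, e.g.\ in Proposition~\ref{equations of mutation}, which speaks of \emph{the} subset $\mathcal{X}_{\tube}$). Your argument does not exclude the possibility that some other subset $\mathcal{Z}\subseteq\mathcal{X}$---perhaps lying entirely in a nonzero shift, or containing only some of $\mathcal{X}_{\tube}$ together with several of your $Y$'s---forms the simples of a different tube. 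Showing ``$\mathcal{X}_{\tube}\cup\{Y\}$ is not a tube'' for each $Y$ separately says nothing about such $\mathcal{Z}$.

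The paper closes this gap by arguing in the opposite direction. Rather than showing $\mathcal{X}_{\tube}$ cannot be enlarged, it shows that each $Y\in\mathcal{X}\setminus\mathcal{X}_{\tube}$ cannot be a simple of \emph{any} tube subcategory whose simples lie in $\mathcal{X}$. First one observes that all simples of a tube must lie in a single shift $\textbf{T}_p[k]$ (since $\Hom^{1}(A[-1],B)=0$ for $A,B\in\textbf{T}_p$). Then, since every such $Y$ has length $<p$ and hence $\Ext^{1}(Y,Y)=0$, a tube containing $Y$ as a simple must have rank $\geq 2$; so there must exist a $\tau$-predecessor $T_{0}\in\mathcal{X}$ in the same shift with $\Hom^{1}(Y,T_{0})\cong\textbf{k}$ and $\Hom^{0}(T_{0},Y)=0$. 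Chasing these two conditions via Lemma~\ref{non-zero morphism between objects} forces $T_{0}$ to have a specific top/socle, and one checks directly from the description of $\mathcal{X}$ in~(\ref{collection of X}) that no element of $\mathcal{X}$ has this form. Hence any tube-simple-subset of $\mathcal{X}$ is contained in $\mathcal{X}_{\tube}$, and uniqueness follows.
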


\begin{proof}
Recall that a tube category $\textbf{T}_r$ with the simples $T_{j}$\, $(0\leq j\leq r-1)$ satisfies $\Hom^{1}(T_{j+1}, T_{j})\cong \textbf{k}$.
It follows that  $T_{j}\in\textbf{T}_p[k]$ for all $j$  and some $k\in\mathbb{Z}$ since $\Hom^{1}(S^{(t_{1})}_{j_{1}}[-1], S^{(t_{2})}_{j_{2}})=0$ for any $S^{(t_{1})}_{j_{1}}, S^{(t_{2})}_{j_{2}} \in\textbf{T}_p$.

Clearly, the only candidate of $\mathcal{X}_{\tube}\subseteq\mathcal{X}$ is  $\mathcal{X}_{\tube}=\mathcal{X}$ if $\mathcal{X}=\{S_{0}, S_{1}, \cdots, S_{p-1}\}$.
Otherwise, we have $\mathcal{X}=\mathop{\cup}\limits_{ i=1 }^{n}\mathcal{X}_{i}$ as in (\ref{collection of X}) by Proposition \ref{heart forms}.
We cliam $F(S^{(t_{m})}_{a_{i}})[k_{m}]\not\in\mathcal{X}_{\tube}$ for any $F(S^{(t_{m})}_{a_{i}})[k_{m}]\in\mathcal{X}$.
For contradiction, up to $\tau$-actions of $\textbf{T}_p$, we let $T_{1}=F(S^{(t_{m})}_{a_{i}})[k_{m}]=
S^{(l_{i}+1-t_{m})}_{a_{i}-t_{m}}[k_{m}+1]\in\mathcal{X}_{\tube}$ with $0\leq a_{i}\leq p-1$.
It follows that $\lvert\mathcal{X}_{\tube}\rvert\geq1$ since $\Ext^{1}(T_{1}, T_{1})=0$ with $l_{i}+1-t_{m}<p$.
Hence, there exists $T_{0}=S^{(t_{m'})}_{j}[k_{m}+1]\in\mathcal{X}_{\tube}$ with $\Hom^{1}( S^{(l_{i}+1-t_{m})}_{a_{i}-t_{m}},$ $S^{(t_{m'})}_{j})\neq0$ and $\Hom^{0}(S^{(t_{m'})}_{j}, S^{(l_{i}+1-t_{m})}_{a_{i}-t_{m}})= 0$.
By similar arguments as in Lemma \ref{vanising homs}, we have $T_{0}=S^{(t_{m'})}_{a_{i}-l_{i}-1}[k_{m}+1]$.
But $S^{(t_{m'})}_{a_{i}-l_{i}-1}[k_{m}+1]\not\in\mathcal{X}$ with $k_{m}+1\geq 1$ if  $S_{a_{i-1}}=\tau S_{a_{i}-l_{i}}$, and
$S^{(t_{m'})}_{a_{i}-l_{i}-1}[k_{m}+1]\not\in\mathcal{X}$ if else.
This proves our claim.

Similarly, we get $S^{(t_{m})}_{a_{i}})[k_{m}],\,S^{(t_{m'})}_{j_{m'}})[k_{m'}]
\not\in\mathcal{X}_{\tube}$ for any $S^{(t_{m})}_{a_{i}})[k_{m}],\,S^{(t_{m'})}_{j_{m'}})[k_{m'}]\in\mathcal{X}$
with $k_{m}<0$ and $a_{i}-l_{i}<j_{m'}<a_{i}$.
On the other hand, $\mathcal{X}':=\mathop{\cup}\limits_{i=1 }^{n}\{S^{(l_{i}+1)}_{a_{i}},  S_{b}\mid a_{i-1}<b<a_{i}-l_{i}\}$ is a collection of simple objects in a tube subcategory $\textbf{T}_{p-\lvert \mathcal{S}\rvert}=\mathcal{S}^{\bot_{\textbf{T}_p}}$ by similar arguments as above.
Therefore, the unique candidate  of $\mathcal{X}_{\tube}\subseteq\mathcal{X}$ with maximal rank is  $\mathcal{X}_{\tube}=\mathcal{X}'$ if $\mathcal{X}=\mathop{\cup}\limits_{ i=1 }^{n}\mathcal{X}_{i}$.
We are done.
\end{proof}

By similar arguments as in Lemma \ref{thick condition}, the following result shows a SMC  in $\mathcal{D}^{b} (\textbf{T}_p)$ can be represented by a collection of SMCs of type $\mathbb{A}$.

\begin{cor} \label{decomposition of SMC of rank geq 2}
Let
$\mathcal{S}$ as in (\ref{non-empty of simples}) with $1<\lvert\mathcal{S}\rvert<p-1$,
$\mathcal{A}_{b}=\{S_{b}\}$ with $a_{i-1}<b<a_{i}-l_{i}$  and $\mathcal{A}_{i}=\mathcal{X}_{i}\setminus(\mathop{\cup}\limits_{b=a_{i-1}+1 }^{a_{i}-l_{i}-1}\mathcal{A}_{b})$.
Then $\mathcal{X}=\mathop{\cup}\limits_{i=1 }^{n}\mathcal{X}_{i}=\mathop{\cup}\limits_{i=1 }^{n}(\mathcal{A}_{i}\cup(\mathop{\cup}\limits_{b=a_{i-1}+1 }^{a_{i}-l_{i}-1}\mathcal{A}_{b}))$ with each of $\mathcal{A}_{i}$ and $\mathcal{A}_{b}$ is a SMC of type $\mathbb{A}$.
\end{cor}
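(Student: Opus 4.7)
The plan is to treat the two types of constituents separately: the singletons $\mathcal{A}_b$ are essentially tautological, while the ``bulk'' pieces $\mathcal{A}_i = \{S^{(l_i+1)}_{a_i}\} \cup F(\mathcal{U}_i)$ reduce to derived categories of linearly oriented Dynkin quivers by recycling the arguments from Lemmas \ref{vanising homs}--\ref{construct SMCs}. The set-theoretic equality $\mathcal{X} = \mathop{\cup}\limits_{i=1}^n (\mathcal{A}_i \cup (\mathop{\cup}\limits_{b=a_{i-1}+1}^{a_i-l_i-1} \mathcal{A}_b))$ is immediate from (\ref{F(U_i)and X_i}) and the hypothesis $1 < |\mathcal{S}| < p-1$, so the content is to verify the SMC axioms for each block and to identify each thick closure with a type-$\mathbb{A}$ derived category.

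For each singleton $\mathcal{A}_b = \{S_b\}$ with $a_{i-1} < b < a_i - l_i$, one has $\End(S_b) \cong \textbf{k}$, the vanishing of negative-degree $\Hom$'s is trivial, and $\thick(S_b) = \langle S_b \rangle_{\mathcal{D}} \simeq \mathcal{D}^b(\Mod\,\mathbb{A}_1)$, so $\mathcal{A}_b$ is a SMC of type $\mathbb{A}_1$. For each $\mathcal{A}_i$, the SMC axioms split into three verifications: (i) the pairwise Hom-orthogonality internal to $F(\mathcal{U}_i)$, which is transported from the SMC axioms of $\mathcal{U}_i$ in $\mathcal{D}^b(\mathcal{A}_{l_i})$ via the isomorphism (\ref{isom homseq}) of Lemma \ref{isom homs}; (ii) the vanishing of $\Hom^{\leq 0}$ between $S^{(l_i+1)}_{a_i}$ and the objects of $F(\mathcal{U}_i)$, which is precisely parts (3)--(4) of Lemma \ref{vanising homs}; and (iii) $\End(X) \cong \textbf{k}$ for every $X \in \mathcal{A}_i$, which holds because every involved indecomposable has length at most $l_i + 1 < p$, by the same argument as in the proof of Lemma \ref{construct SMCs}.

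To identify $\thick(\mathcal{A}_i)$, I would first recover every $S^{(t)}_{a_i}$ with $1 \leq t \leq l_i$ from $\{S^{(l_i+1)}_{a_i}\} \cup F(\mathcal{U}_i)$ via the exchange triangle (\ref{exchange triangle}), and then recover the simple $S_{a_i - l_i}$ from the triangle $S^{(l_i)}_{a_i}[-1] \to S_{a_i-l_i} \to S^{(l_i+1)}_{a_i} \to S^{(l_i)}_{a_i}$ already used in the proof of Lemma \ref{thick condition}. This yields $\thick(\mathcal{A}_i) \supseteq \langle S_{a_i-l_i}, S_{a_i-l_i+1}, \ldots, S_{a_i} \rangle_{\mathcal{D}} \simeq \mathcal{D}^b(\Mod\,\mathbb{A}_{l_i+1})$, and the reverse inclusion is clear since every object of $\mathcal{A}_i$ is built from these simples. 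Thus each $\mathcal{A}_i$ is a SMC of type $\mathbb{A}_{l_i+1}$. The main potential pitfall is keeping track of the case distinction in the definition (\ref{F action in type A}) of $F$ when computing the Hom spaces in step (i), since $F$ acts as a shift-and-reflect on objects with top $S_{a_i}$ but as the identity on the rest; this is exactly what Lemma \ref{isom homs} is designed to handle, so the argument should reduce to a routine bookkeeping exercise once the right lemmas are chained.
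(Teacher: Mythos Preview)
Your approach is correct and is precisely the unpacking the paper intends: the paper gives no proof beyond the remark ``by similar arguments as in Lemma \ref{thick condition}'', and your recycling of Lemmas \ref{vanising homs}--\ref{construct SMCs} restricted to a single block $\mathcal{A}_i$ is exactly that. One minor reshuffling of citations: part (4) of Lemma \ref{vanising homs} belongs in your step (i), not (ii), since it is the statement that handles the \emph{mixed} pairs inside $F(\mathcal{U}_i)$ (one object $F(S^{(t_m)}_{j_m})[k_m]$ with $k_m \geq 0$, one object $S^{(t_{m'})}_{j_{m'}}[k_{m'}]$ with $k_{m'} < 0$) which Lemma \ref{isom homs} alone does not cover; correspondingly, step (ii) should invoke parts (1)--(3) of Lemma \ref{vanising homs}, as part (4) does not involve $S^{(l_i+1)}_{a_i}$ at all.
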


Now let us define pre-simple-minded collections of type $\mathbb{A}$ in $\mathcal{D}=\mathcal{D}^{b} (\textbf{T}_p)$ as follows.
\begin{defn} ~\label{defn for pre-smc}
Let $\mathcal{S}$ be a (possibly empty) proper collection of simple objects in ${\emph{\textbf{T}}_p}$.
A collection  $\mathcal{U}$ of objects in $\mathcal{D}^{b} ({\emph{\textbf{T}}_p})$ is said to be  a \emph{pre-simple-minded collection} \emph{(pre-SMC)} of type $\mathbb{A}$ if $\mathcal{U}$ is a \emph{SMC} in $\langle\mathcal{S}\rangle_{\mathcal{D}}$.
\end{defn}

Note that the AR-translation $\tau$ is an auto-equivalence of \rm{\textbf{T}}$_p$.
Hence $\mathcal{U}$ is a pre-SMC in $\mathcal{D}^{b} ({\textbf{T}}_p)$ if and only if $\tau \mathcal{U}$ is a pre-SMC.
Therefore, in the following we only classify the pre-SMCs of type $\mathbb{A}$ up to $\tau$-actions.

\begin{exam}
Up to $\tau$-actions of \rm{\textbf{T}}$_p$, there are five  classes of pre-SMCs of type $\mathbb{A}$ in $\mathcal{D}^{b} ({\textbf{T}_3})$ as in Table 1 (each box determines an equivalence class), where $\delta\in\mathbb{Z},\; k\geq1$.
\begin{table}[h]
\caption{Pre-SMCs\, $\mathcal{U}$ of type $\mathbb{A}$ in $\mathcal{D}^{b} ({\textbf{T}_3})$}
\begin{tabular}{|c|c|c|c|c|c|}
\hline
\multicolumn{5}{|c|}{$\mathcal{U}$}\\
\hline
\makecell*[c]{\; }&&&&$\{S_{2}[\delta],\;  S_{1}[\delta-k+1]\}$\\
\cline{5-5}
\makecell*[c]{$\emptyset$}&&$\{S^{}_{2}[\delta]\}$&
&$\{S^{(2)}_{2}[\delta],\;  S_{1}[\delta+k]\}$\\
\cline{5-5}
\makecell*[c]{\; }& &&
&$\{S^{(2)}_{2}[\delta],\;  S_{2}[\delta-k]\}$  \\
\hline
\end{tabular}
\label{pre-smc in T3}
\end{table}
\end{exam}

Given $\mathcal{S}= \mathop{\cup} \limits_{ i=1 }^{n} \{  \tau^{k}S_{a_{i}}\mid0\leq k < l_{i}  \}$ as in (\ref{non-empty of simples}), we denote $F^{-1}$ by the following assignment
$$F^{-1}\,:\langle\mathcal{S}\cap\tau\mathcal{S}\rangle_{\mathcal{D}}\cup\left(  \langle\tau\mathcal{S}\rangle_{\mathcal{D}}\setminus\langle \mathcal{S}\cap\tau\mathcal{S}\rangle_{\mathcal{D}}\right ) \longrightarrow\langle\mathcal{S}\rangle_{\mathcal{D}}$$
satisfying
\begin{equation}\label{F^{-1} action}
F^{-1}(S^{(t)}_{j}[m])=
\begin{cases}
S^{(l_{i}+1-t_{})}_{a_{i}}[m-1], & \text{if $\soc (S^{(t)}_{j})=S^{}_{a_{i}-l_{i}};$}\\
S^{(t)}_{j}[m], & \text{if else.}\\
\end{cases}
\end{equation}

\begin{lem} ~\label{F^{-1} smcs}
Let $\mathcal{X}\neq\{S_{0}, S_{1}, \cdots, S_{p-1}\}$ be a SMC in $\mathcal{D}^{b}(\emph{\textbf{T}}_p)$ with non-empty maximal subset $\mathcal{X}_{\tube}$ as in Proposition \ref{collection of simples of sub-tube}, and $F^{-1}$ as in (\ref{F^{-1} action}).
Then $F^{-1}(\mathcal{X}\setminus\mathcal{X}_{\tube})$ is a pre-SMC of type $\mathbb{A}$ in $\mathcal{D}^{b}(\emph{\textbf{T}}_p)$.
\end{lem}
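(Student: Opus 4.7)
The plan is to reduce the statement to the identity $F^{-1}\circ F = \mathrm{id}$ on each $\mathcal{U}_i$, using the classifications already established, so that $F^{-1}(\mathcal{X}\setminus\mathcal{X}_{\tube})$ is literally the collection $\mathcal{U}$ from which $\mathcal{X}$ was built. First, I would apply Proposition \ref{heart forms} to the hypothesis $\mathcal{X}\neq\{S_0,\dots,S_{p-1}\}$: up to a shift one gets a proper collection $\mathcal{S}=\bigcup_{i=1}^{n}\{\tau^kS_{a_i}\mid 0\leq k<l_i\}$ as in (\ref{non-empty of simples}) together with an SMC $\mathcal{U}=\bigcup_{i=1}^{n}\mathcal{U}_i$ in $\langle\mathcal{S}\rangle_{\mathcal{D}}$ so that $\mathcal{X}=\bigcup_{i=1}^{n}\mathcal{X}_i$ is exactly the collection produced by (\ref{F(U_i)and X_i})--(\ref{collection of X}). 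Proposition \ref{collection of simples of sub-tube} then identifies $\mathcal{X}_{\tube}$ as the simples of the ambient tube subcategory, i.e. $\bigcup_{i=1}^{n}\{S^{(l_i+1)}_{a_i},S_b\mid a_{i-1}<b<a_i-l_i\}$. Subtracting, one gets
\[
\mathcal{X}\setminus\mathcal{X}_{\tube}\;=\;\bigcup_{i=1}^{n} F(\mathcal{U}_i).
\]

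Next I would verify, piece by piece, that $F^{-1}(F(\mathcal{U}_i))=\mathcal{U}_i$. Recall each $S^{(t_m)}_{j_m}$ in $\mathcal{U}_i$ lies in $\mathcal{A}_{l_i}$, so $j_m\in\{a_i-l_i+1,\dots,a_i\}$. For members with $k_m<0$ we put $S^{(t_m)}_{j_m}[k_m]$ directly into $F(\mathcal{U}_i)$; since the socle $S_{j_m-t_m+1}$ of $S^{(t_m)}_{j_m}$ equals $S_{a_i-l_i}$ only when $j_m=a_i$ and $t_m=l_i+1$ (which is excluded inside $\mathcal{A}_{l_i}$), the second branch of (\ref{F^{-1} action}) applies and returns $S^{(t_m)}_{j_m}[k_m]$. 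For members with $k_m\geq 0$: if $\top(S^{(t_m)}_{j_m})=S_{a_i}$, then $F(S^{(t_m)}_{a_i})[k_m]=S^{(l_i+1-t_m)}_{a_i-t_m}[k_m+1]$, whose socle is $S_{a_i-t_m-(l_i+1-t_m)+1}=S_{a_i-l_i}$, so the first branch of (\ref{F^{-1} action}) fires and returns $S^{(l_i+1-(l_i+1-t_m))}_{a_i}[k_m]=S^{(t_m)}_{a_i}[k_m]$; if $\top(S^{(t_m)}_{j_m})\neq S_{a_i}$, then $F$ is the identity and the same check of socles shows $F^{-1}$ is also the identity on this object.

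Combining the two computations we obtain $F^{-1}(\mathcal{X}\setminus\mathcal{X}_{\tube})=\bigcup_{i=1}^n\mathcal{U}_i=\mathcal{U}$, which is an SMC in $\langle\mathcal{S}\rangle_{\mathcal{D}}$ by the very construction recalled in Section~3; by Definition \ref{defn for pre-smc} this is exactly a pre-SMC of type $\mathbb{A}$ in $\mathcal{D}^b(\textbf{T}_p)$. The only delicate step is the case analysis in the middle paragraph: one must be sure that the \emph{socle} conditions appearing in (\ref{F^{-1} action}) match up with the \emph{top} conditions appearing in (\ref{F action in type A}), and in particular that for the negative-shift members of $\mathcal{U}_i$ (to which $F$ was not applied) the socle is \emph{never} one of the distinguished simples $S_{a_i-l_i}$, so that $F^{-1}$ acts trivially on them as well. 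This is the point where the assumption $t_m\leq l_i$ for objects internal to $\mathcal{A}_{l_i}$ is essential, and is the part I expect to be the main obstacle in writing the proof up carefully.
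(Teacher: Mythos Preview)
Your proposal is correct and follows essentially the same route as the paper: both arguments invoke Proposition~\ref{heart forms} to write $\mathcal{X}=\bigcup_i\mathcal{X}_i$ as built from some $\mathcal{U}=\bigcup_i\mathcal{U}_i$, and Proposition~\ref{collection of simples of sub-tube} to identify $\mathcal{X}_{\tube}$ and hence $\mathcal{X}\setminus\mathcal{X}_{\tube}=\bigcup_i F(\mathcal{U}_i)$. The paper's proof is extremely terse (two sentences citing the two propositions), whereas you carry out explicitly the case analysis showing $F^{-1}\circ F=\mathrm{id}$ on each $\mathcal{U}_i$; this verification is left implicit in the paper but is exactly what is needed, and your check of the socle condition (ensuring $F^{-1}$ acts trivially on the negative-shift members and on the $\top\neq S_{a_i}$ members, via $t_m\leq l_i$) is the right way to make it rigorous.
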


\begin{proof}
By Proposition \ref{collection of simples of sub-tube}, we have  $\langle \mathcal{X}_{\tube}\rangle=\mathcal{S}^{\bot_{\textbf{T}_p}}$, where $\mathcal{S}$ is a collection  of  simple objects in $\textbf{T}_p$.
Then $F^{-1}(\mathcal{X}\setminus\mathcal{X}_{\tube})$ is a SMC in $\langle\mathcal{S}\rangle_{\mathcal{D}}$ by Proposition \ref{heart forms}.
We are done.
\end{proof}

Using pre-SMC approach, we provide a classification of hearts  in $\mathcal{D}=\mathcal{D}^{b}({\textbf{T}}_p)$.

\begin{thm} ~\label{bijection of hearts and pre-smcs}
There is a bijection
$$
\xymatrix@C=4.5em@R=5.5ex@M=1pt@!0{
&\{\emph{Hearts}\ \mathcal{H}\ \emph{in}\  \mathcal{D}^{b}({\emph{\textbf{T}}_p})\}\,/\,\mathbb{Z}\quad\quad\quad&
\ar[r]^-{\psi}\quad&&
\quad\quad\quad\quad\quad\quad\quad\quad\{\emph{Pre-SMCs}\ \, \mathcal{U}\ \emph{of}\ \emph{type}\ \mathbb{A}\ \emph{in}\ \mathcal{D}^{b}({\emph{\textbf{T}}_p})\}&&&   \\ }
$$
such that the map $\psi$ sends the hearts $\mathcal{H}$ to pre-SMC\; $\mathcal{U}$ of type $\mathbb{A}$ via the assignment $F^{-1}$ as in (\ref{F^{-1} action}).
\end{thm}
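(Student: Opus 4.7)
The plan is to reduce the theorem, via Lemma \ref{bijectionHeartandSmc}, to a bijection between $\mathbb{Z}$-orbits of SMCs in $\mathcal{D}^{b}({\textbf{T}}_p)$ and pre-SMCs of type $\mathbb{A}$, and then to realize this bijection by the explicit pair $(F, F^{-1})$ from (\ref{F action in type A}) and (\ref{F^{-1} action}). The map $\psi$ is built in three steps: a heart $\mathcal{H}$ (up to shift) is first replaced by its simples collection $\mathcal{X}$, which is an SMC by Lemma \ref{bijectionHeartandSmc}; by Proposition \ref{heart forms} one normalizes $\mathcal{X}$ to the canonical form $\{S_0,\dots,S_{p-1}\}$ or $\cup_{i=1}^n \mathcal{X}_i$ as in (\ref{collection of X}); by Proposition \ref{collection of simples of sub-tube}, $\mathcal{X}$ has a unique maximal tube part $\mathcal{X}_{\tube}$; and then Lemma \ref{F^{-1} smcs} shows $\mathcal{U}:=F^{-1}(\mathcal{X}\setminus \mathcal{X}_{\tube})$ is a pre-SMC of type $\mathbb{A}$ in $\langle\mathcal{S}\rangle_{\mathcal{D}}$, where $\mathcal{S}$ is the collection implicit in the canonical form (with $\mathcal{S}=\emptyset$ sending the standard heart to the empty pre-SMC).

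Next I would construct the inverse $\phi$ directly from the recipe already encoded in the definitions. Given a pre-SMC $\mathcal{U}$ of type $\mathbb{A}$, by definition $\mathcal{U}$ is an SMC in some $\langle\mathcal{S}\rangle_{\mathcal{D}}$. Writing $\mathcal{S}=\cup_{i=1}^n\{\tau^{k}S_{a_i}\mid 0\le k<l_i\}$ as in (\ref{non-empty of simples}) and splitting $\mathcal{U}=\cup_i \mathcal{U}_i$ accordingly, define $\mathcal{X}_i=\{S^{(l_i+1)}_{a_i}, S_b\mid a_{i-1}<b<a_i-l_i\}\cup F(\mathcal{U}_i)$ and $\mathcal{X}=\cup_i \mathcal{X}_i$ as in (\ref{collection of X}). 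By Lemma \ref{construct SMCs} $\mathcal{X}$ is an SMC in $\mathcal{D}^{b}({\textbf{T}}_p)$, and Lemma \ref{bijectionHeartandSmc} produces the heart $\phi(\mathcal{U})$. Sending the empty pre-SMC to the heart ${\textbf{T}}_p$ itself handles the $\mathcal{S}=\emptyset$ case.

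Finally I would verify $\phi\circ\psi=\mathrm{id}$ and $\psi\circ\phi=\mathrm{id}$ by comparing the two constructions term by term. For $\psi\circ\phi$, starting from $\mathcal{U}$, the built $\mathcal{X}$ has, by Proposition \ref{collection of simples of sub-tube}, exactly $\mathcal{X}_{\tube}=\cup_i\{S^{(l_i+1)}_{a_i}, S_b\}$, so $\mathcal{X}\setminus \mathcal{X}_{\tube}=F(\mathcal{U})$; and the formulas (\ref{F action in type A}) and (\ref{F^{-1} action}) are designed to satisfy $F^{-1}\circ F=\mathrm{id}$ on the objects appearing in $\mathcal{U}$ (the top/socle conditions are exchanged under $\tau$, matching the $\pm 1$ shift), so $F^{-1}(F(\mathcal{U}))=\mathcal{U}$. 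For $\phi\circ\psi$, starting from $\mathcal{H}$ in canonical form with SMC $\mathcal{X}$, uniqueness of $\mathcal{X}_{\tube}$ (Proposition \ref{collection of simples of sub-tube}) plus $F\circ F^{-1}=\mathrm{id}$ on $\mathcal{X}\setminus\mathcal{X}_{\tube}$ gives $\phi(\psi(\mathcal{H}))=\mathcal{H}$ back, via Lemma \ref{bijectionHeartandSmc}.

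The main obstacle is bookkeeping for the shift. One must check that quotienting hearts by the $\mathbb{Z}$-action of $[1]$ precisely kills the ambiguity of choosing $\mathcal{X}_{\tube}$ in degree $0$ versus some other degree, while $\tau$-equivalences of $\mathcal{S}$ are absorbed by the choice of labeling indices $a_i$; once one fixes the normalization $\mathcal{X}_{\tube}\subset {\textbf{T}}_p$ (so that the labels $k_m$ in (\ref{collection of X}) are literally the shifts recorded by the pre-SMC), the mutual inversion of $F$ and $F^{-1}$ is formal from (\ref{F action in type A}) and (\ref{F^{-1} action}), and no further calculation beyond what Lemmas \ref{vanising homs}--\ref{construct SMCs} already supply is needed.
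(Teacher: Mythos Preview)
Your proposal is correct and follows essentially the same approach as the paper's proof: both define $\psi$ by passing from a heart to its SMC (Lemma~\ref{bijectionHeartandSmc}), normalizing via Propositions~\ref{heart forms} and~\ref{collection of simples of sub-tube}, stripping off $\mathcal{X}_{\tube}$, and applying $F^{-1}$ (Lemma~\ref{F^{-1} smcs}); both define the inverse by rebuilding $\mathcal{X}=\cup_i\mathcal{X}_i$ from a pre-SMC $\mathcal{U}$ via (\ref{collection of X}) and Lemma~\ref{construct SMCs}. The paper dispatches the mutual inversion with a one-word ``Clearly'', whereas you spell out that it reduces to $F^{-1}\circ F=\mathrm{id}$ and $F\circ F^{-1}=\mathrm{id}$ on the relevant objects; just note that in (\ref{F(U_i)and X_i}) the map $F$ is applied only to the summands with $k_m\geq 0$, so the inversion check must respect this sign split rather than being a literal composition of the assignments in (\ref{F action in type A}) and (\ref{F^{-1} action}).
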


\begin{proof}
By Propositions \ref{heart forms} and \ref{collection of simples of sub-tube}, up to shift actions, each heart $\mathcal{H}$ is generated by a  SMC  $\mathcal{X}=\{S^{(t_{1})}_{j_{1}},\cdots, S^{(t_{r})}_{j_{r}}; S^{(t_{m})}_{j_{m}}[k_{m}]\mid r+1\leq m\leq p\}$ such that  $\mathcal{X}_{\tube}=\{S^{(t_{1})}_{j_{1}},\cdots, S^{(t_{r})}_{j_{r}}\}$ is a collection of simple objects in a tube subcategory $\textbf{T}_r\subseteq\textbf{T}_p$. If $\mathcal{X}=\{S_{0}, S_{1}, \cdots, S_{p-1}\}$, we define $\psi(\mathcal{H})= \emptyset$. Then $\emptyset$ is a SMC\, in $\langle\mathcal{S}\rangle_{\mathcal{D}}$ with  $\mathcal{S}= \emptyset$.
Otherwise, there exist a non-empty collection $\mathcal{S}=\mathop{\cup}\limits_{i=1 }^{n}\{\tau^{k}S_{a_{i}} \mid0\leq k < l_{i}\}$ of  simple objects in $\textbf{T}_p$ such that  $\langle \mathcal{X}_{\tube}\rangle=\mathcal{S}^{\bot_{\textbf{T}_p}}$ by Proposition \ref{collection of simples of sub-tube}.
We define $\psi(\mathcal{H})=F^{-1}(\mathcal{X}\setminus\{S^{(t_{1})}_{j_{1}},\cdots, S^{(t_{r})}_{j_{r}}\})=\{F^{-1}(S^{(t_{m})}_{j_{m}}[k_{m}])\mid r+1\leq m\leq p\}$.
Then $\psi(\mathcal{H})$ is a pre-SMC of type $\mathbb{A}$ in $\mathcal{D}^{b} ({\textbf{T}}_p)$ by Lemma \ref{F^{-1} smcs}.

Conversely, let $\mathcal{U}$ be a pre-SMC of type $\mathbb{A}$. If $\mathcal{U}=\emptyset$, we define $\psi^{-1}(\mathcal{U})=\mathcal{H}=\langle S_{0}, S_{1}, \cdots, S_{p-1}\rangle$. Then $\mathcal{H}$ is a heart with SMC $\mathcal{X}=\{S_{0}, S_{1}, \cdots, S_{p-1}\}$ in $\mathcal{D}^{b} (\textbf{T}_p)$.
Otherwise, there exists a collection $\mathcal{S}=\mathop{\cup}\limits_{i=1 }^{n}\{\tau^{k}S_{a_{i}} \mid0\leq k<l_{i}\}$ of  simple objects in $\textbf{T}_p$ such that ${}^{\bot_{\mathcal{D} }}(<\mathcal{U}>^{\bot_{\mathcal{D}}})=<\mathcal{S}>_{\mathcal{D}}=\thick(\mathcal{U})$ and $\mathcal{U}$ is a SMC in $\langle\mathcal{S}\rangle_{\mathcal{D}}$.
We define $\psi^{-1}(\mathcal{U})=\mathcal{H}=\langle\mathcal{X}\rangle$ with $\mathcal{X}=\mathop{\cup}\limits_{i=1 }^{n}\mathcal{X}_{i}$
as in (\ref{collection of X}).
Then $\mathcal{H}$ is a heart with SMC $\mathcal{X}=\mathop{\cup}\limits_{i=1 }^{n}\mathcal{X}_{i}$ in $\mathcal{D}^{b} ({\textbf{T}}_p)$ by Lemma \ref{construct SMCs}.
Clearly, $\psi\psi^{-1}(\mathcal{U})=\mathcal{U}, \psi^{-1}\psi(\mathcal{H})=\mathcal{H}$.
We are done.
\end{proof}

As an explanation of Theorem \ref{bijection of hearts and pre-smcs}, we list all the hearts in $\mathcal{D}^{b} (${\rm{\textbf{T}}}$_3)$ as below.

\begin{exam} \label{hearts for T3}
Up to $\tau$-actions of $\textbf{T}_3$ and shift actions of $\mathcal{D}^{b} (${\rm{\textbf{T}}}$_3)$, the hearts $\mathcal{H}$  in $\mathcal{D}^{b} ({\textbf{T}_3})$ are classified as in Table 2 (each box determines an equivalence class), where $k\geq1,\; \delta_{1}\geq0,  \delta_{2}<0,\;  \delta_{3}\geq k,\; 0\leq\delta_{4}<k$.
\begin{table}[h]
\caption{Hearts $\mathcal{H}$ in $\mathcal{D}^{b} (${\rm{\textbf{T}}}$_3)$}
\begin{tabular}{|c|c|c|c|c|c|}
\hline
\multicolumn{5}{|c|}{$\mathcal{H}$}\\
\hline
\makecell*[c]{$\langle S^{}_{0},\;  S^{}_{1},\; S^{}_{2}\rangle$}&&$\langle S_{2}^{(3)};\; S^{(2)}_{1}[\delta_{1}+1],\;  S_{1}[\delta_{1}+1-k] \rangle$&&$\langle S_{2}^{(3)};\; S^{(2)}_{2}[\delta_{2}],\;  S_{1}[\delta_{2}+k] \rangle$ \\
\cline{1-1} \cline{3-3} \cline{5-5}
\makecell*[c]{$\langle S_{2}^{(2)},\; S_{0};\; S_{1}[\delta_{1}+1]\rangle$}&&$\langle S_{2}^{(3)};\; S^{}_{2}[\delta_{2}],\;  S_{1}[\delta_{2}+1-k] \rangle$&&$\langle S_{2}^{(3)};\; S^{}_{0}[\delta_{3}+1],\;  S^{(2)}_{1}[\delta_{3}+1-k] \rangle$ \\
\cline{1-1} \cline{3-3} \cline{5-5}
\makecell*[c]{\multirow{2}{4.5cm}{$\langle S_{2}^{(2)},\; S_{0};\; S_{2}[\delta_{2}]\rangle$}}&&\multirow{2}{4.5cm}{$\langle S_{2}^{(3)};\; S^{}_{0}[\delta_{1}+1],\;  S_{1}[\delta_{1}+k] \rangle$}&&$\langle S_{2}^{(3)};\; S^{}_{0}[\delta_{4}+1],\;  S^{}_{2}[\delta_{4}-k] \rangle$ \\
 \cline{5-5}
\makecell*[c]{\;}&&&&$\langle S_{2}^{(3)};\; S_{2}^{(2)}[\delta_{2}],\;  S^{}_{2}[\delta_{2}-k] \rangle$ \\
\hline
\end{tabular}
\label{non-trivial hearts for T3}
\end{table}
\end{exam}

We end this section by providing a concrete example of AR-quiver of a heart in $\mathcal{D}^{b} (${\rm{\textbf{T}}}$_7)$.

\begin{exam}
The  AR-quiver $\Gamma(\mathcal{H}_{})$ of a heart $\mathcal{H}_{}=\langle  S^{(2)}_{1}, S^{(4)}_{5}, S_{6}\, ; S_{0}[1],  S^{}_{3}[-1], S^{}_{4}[-1], S^{}_{5}[-1]  \rangle$ in $\mathcal{D}^{b} (${\rm{\textbf{T}}}$_7)$ has the following form, where the objects in tube subcategoty $\langle S^{(2)}_{1}, S^{(4)}_{5}, S_{6}\rangle$ have been marked by $\bigcirc$.
$$
\xymatrix@C=4.6em@R=5.5ex@M=0.001pt@!0{
&  & S^{}_{3}[-1]\ar[d] &  &  & &  &  &  &  &   &  &  &   \\
&  & S^{(2)}_{4}[-1]\ar[r]\ar[d] & S^{}_{4}[-1]\ar[d] &  &  &  &   &   &   &    &   &   &    \\
&  & S^{(3)}_{5}[-1]\ar[r] & S^{(2)}_{5}[-1]\ar[r] & S^{}_{5}[-1] &   &    &    &    &   &   &    &   &    \\
&\vdots & \vdots & \vdots & \vdots & \vdots & \vdots & \vdots & \vdots & &    &   &   &    \\
& \vdots & \vdots & \vdots & \vdots & \vdots & \vdots & \vdots & \vdots &&   &   &   &    \\
& *++=[o][F]\txt{$S^{(2)}_{1}$}\ar[r]\ar[d] & S^{(3)}_{2}\ar[r]\ar[d] & S^{(4)}_{3}\ar[r]\ar[d] &  S^{(5)}_{4}\ar[r]\ar[d] & *++=[o][F]\txt{$S^{(6)}_{5}$}\ar[r]\ar[d] &
*++=[o][F]\txt{$S^{(7)}_{6}$}\ar[r]\ar[d]  &
*++=[o][F]\txt{$S^{(9)}_{1}$}\ar[d] & \cdots  &   &   &    &       \\
&  S^{}_{1}\ar[r] & S^{(2)}_{2}\ar[r]\ar[d] & S^{(3)}_{3}\ar[r]\ar[d] &  S^{(4)}_{4}\ar[r]\ar[d] & S^{(5)}_{5}\ar[r]\ar[d] &S^{(6)}_{6}\ar[r]\ar[d]  & S^{(8)}_{1}\ar[d] & \cdots &S_{0}[1] &   &    &     \\
& & S_{2}\ar[r] & S^{(2)}_{3}\ar[r] & S^{(3)}_{4}\ar[r] &*++=[o][F]\txt{$S^{(4)}_{5}$}\ar[r] &
*++=[o][F]\txt{$S^{(5)}_{6}$}\ar[r]\ar[d]  &
*++=[o][F]\txt{$S^{(7)}_{1}$}\ar[d] & \cdots  &    &   &   &    \\
&    &  &  &   &   & *++=[o][F]\txt{$S_{6}$}\ar[r]  &
*++=[o][F]\txt{$S^{(3)}_{1}$}\ar[d] & \cdots &    &   &   &   \\
&  &   &  &   &   &   & *++=[o][F]\txt{$S^{(2)}_{1}$}\ar[d] & \cdots  &    &   &   &   \\
& &  &  &  & &  & S_{1} & \cdots  &  &   &  &  &   \\
   }
$$
\end{exam}

\section{Exchange graphs}

In this section we investigate the mutations of SMCs in $\mathcal{D}^{b} (${\rm{\textbf{T}}}$_p)$. We will prove the exchange graph for tube category $\textbf{T}_{p}$ is connected.
Moreover we show the space of  stability conditions on $\mathcal{D}^{b}(${\rm{\textbf{T}}}$_p)$ is connected.

\subsection{SMC mutations}
One useful feature of SMCs is that they admit a notion of mutation.
The precise procedure is as follows, cf. \cite{kysi}.
Let $\mathcal{X}_{i}$ denote the extension closure of $X_{i}$ in $\mathcal{D}=\mathcal{D}^{b} (\textbf{T}_p)$. Assume that for any $j$ the object $X_{j}[-1]$ admits a minimal left approximation (m.l.a for short) $g_{j} : X_{j}[-1] \rightarrow X_{ij}$ in $\mathcal{X}_{i}$.

\emph{The left mutation} $\mu_{i}^{+} (X_{1}, X_{2}, \cdots, X_{r})$ of $X_{1}, X_{2}, \cdots, X_{r}$ at $X_{i}$ is a new collection $X'_{1}, X'_{2}, \cdots,  X'_{r}$ such that $X_{i}' = X_{i}[1]$ and $X_{j}' (j \not= i)$ is the cone of the above left approximation
$$ X_{j}[-1] \stackrel{g_{j}}{\longrightarrow} X_{ij}.$$
Similarly one defines \emph{the right mutation} $\mu_{i}^{-} (X_{1},\ \cdots,\ X_{r})$.

In the following we investigate the mutations of SMCs in $\mathcal{D}^{b} (\textbf{T}_p)$, which are important for our later use.

\begin{lem} \label{mutations of SMC}
Let  $\mathcal{X}=\{X_{1}, X_{2}, \cdots,  X_{p}\}$ be a SMC  in $\mathcal{D}^{b}(${\rm{\textbf{T}}}$_p)$. For any $1\leq i\leq p$, we have
\begin{itemize}
\item [(1)] the clllection $ \mu_{i}^{+} (\mathcal{X})$ is simple
minded;
\item [(2)] the clllection $\mu_{i}^{-}(\mathcal{X})$ is  simple-minded.
\end{itemize}
\end{lem}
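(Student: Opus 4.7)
The plan is to verify the three SMC axioms for the mutated collection $\mu_i^+(\mathcal{X}) = \{X_1', \ldots, X_p'\}$ with $X_i' = X_i[1]$; the case of $\mu_i^-$ in (2) is then dual. My first step is to secure existence of the minimal left approximations $g_j \colon X_j[-1] \to X_{ij}$ in $\mathcal{X}_i = \langle X_i \rangle$ that enter the definition. Because $\mathcal{D}^{b}(\textbf{T}_p)$ is Hom-finite and Krull--Schmidt and $\End(X_i) \cong \textbf{k}$ (each $X_i$ is a shift of an object $S^{(t)}_j$ with $t \leq p$ by Proposition \ref{heart forms}, and such $\End(S^{(t)}_j) \cong \textbf{k}$), the extension closure $\mathcal{X}_i$ is functorially finite: a minimal left approximation of $X_j[-1]$ is assembled iteratively from the finite-dimensional spaces $\Hom(X_j[-1], X_i[k])$ via universal extensions, the process terminating by Hom-finiteness.

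With the approximation triangle
$$X_j[-1] \xrightarrow{\,g_j\,} X_{ij} \longrightarrow X_j' \longrightarrow X_j$$
at hand, the generation axiom is immediate: $X_j' \in \thick(\mathcal{X})$, and rotating the same triangle shows $X_j \in \thick(\mu_i^+(\mathcal{X}))$, hence $\thick(\mu_i^+(\mathcal{X})) = \thick(\mathcal{X}) = \mathcal{D}^{b}(\textbf{T}_p)$. To verify $\Hom(X_a', X_b'[k]) = 0$ for $k < 0$ and $a \neq b$, I would apply the functors $\Hom(X_a', -)$ and $\Hom(-, X_b'[k])$ to the defining triangles for $X_a'$ and $X_b'$, and chase the resulting long exact sequences. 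The crucial inputs are \emph{(a)} the left-approximation property of $g_j$, which renders $\Hom(X_{ij}, X_i[k]) \to \Hom(X_j[-1], X_i[k])$ surjective, and \emph{(b)} the original SMC vanishing $\Hom(X_a, X_b[k]) = 0$ for $k < 0$. The division-algebra condition $\End(X_j') \cong \textbf{k}$ and the Hom-orthogonality $\Hom(X_a', X_b') = 0$ for $a \neq b$ are obtained from the same diagram chase combined with minimality of $g_j$, which pins down the endomorphism ring of $X_j'$ uniquely.

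The principal obstacle is the case analysis needed to carry out this diagram chase in full: depending on whether the index $i$ coincides with $a$, with $b$, or with neither, and whether the approximation object $X_{ij}$ involves several shifted copies of $X_i$, several subcases of the long exact sequence must be treated separately. All of them, however, reduce to the two inputs \emph{(a)} and \emph{(b)} above, so once the Hom-finiteness-based existence of minimal approximations is in hand, the verification is mechanical. Part (2) is then proved by the mirror-image construction, using a minimal right approximation of $X_j[1]$ in $\mathcal{X}_i$ and the triangle $X_j \to X_{ij}'' \to X_j'' \to X_j[1]$ with $X_i' = X_i[-1]$, and the dual long-exact-sequence argument delivers the corresponding SMC axioms for $\mu_i^-(\mathcal{X})$.
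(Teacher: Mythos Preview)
Your direct verification of the three SMC axioms via the defining triangles is correct in outline but takes a different route from the paper. The paper instead invokes \cite[Prop.~7.6]{kysi}, which packages your diagram chase into a ready-made criterion: the left mutation is again an SMC once the minimal left approximation $g_j$ has the property that $\Hom(g_j, X_i)$ and $\Hom(g_j, X_i[1])$ are injective. The paper then splits on whether $X_i = S^{(t)}_j[m]$ has $t < p$ or $t = p$. For $t < p$ one has $\Hom(X_i, X_i[1]) = 0$, so $\mathcal{X}_i = \add(X_i)$ and the criterion holds trivially. For $t = p$ the paper exploits the tube combinatorics to identify $g_j$ explicitly as a monomorphism $S^{(j'+1)}_{j'} \hookrightarrow S^{(p)}_{p-1}$ and checks the injectivity hypothesis with a short pull-back diagram. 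Your approach would rederive the content of \cite[Prop.~7.6]{kysi} from scratch, which buys self-containment; but you then owe the reader precisely this case split, since for $t = p$ the extension closure $\mathcal{X}_i$ contains infinitely many indecomposables (all $S^{(np)}_j$, $n\geq 1$) and your iterated-universal-extension termination argument needs a sharper justification than bare Hom-finiteness of the ambient category.
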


\begin{proof}
We only prove the first statement, since the proof for the second one is similar.
Let $X_{i}=S^{(t)}_{j}[m]$. Clearly, $t\leq p$.  We consider the following two cases.
If $X_{i}=S^{(t)}_{j}[m]$ with $t<p$. Then  we have $\Hom(X_{i}, X_{i}[1])=0$ and $\mathcal{X}_{i}=\add(X_{i})$. By \cite[Prop.\;7.6]{kysi}, the conclusion holds true;

Otherwise,  $X_{i}=S^{(t)}_{j}[m]$ with $t=p$. Up to $\tau$-actions of \rm{\textbf{T}}$_p$, we assume  $X_{i}= S_{p-1}^{(p)}$.
Since $\mathcal{X}$ is a SMC, we have $\Hom(X_{j}[-1], S_{p-1}^{(p)})\not=0$ if and only if $X_{j}=S^{(j'+1)}_{j'}[1]$ with  $j\not=i$ and $0\leq j'\leq p-1$.
If $\Hom(X_{j}[-1], S_{p-1}^{(p)})=0$, then the  m.l.a\; $g_{j}$ of  $X_{i}$ is
$X_{j}[-1] \stackrel{g_{j}}{\longrightarrow}0$. Hence $\Hom(g_{j}, S_{p-1}^{(p)})$ and $\Hom(g_{j}, S_{p-1}^{(p)}[1])$ are all injective;
If not, $\Hom(X_{j}[-1], S_{p-1}^{(p)})\not=0$,  we assume  $X_{j}=S^{(j'+1)}_{j'}[1]$ as above. Then the m.l.a\; $g_{j}: S^{(j'+1)}_{j'}\stackrel{}\longrightarrow S_{p-1}^{(p)}$  of  $X_{i}$ is monomorphism.
Since $0\rightarrow S^{(j'+p+1)}_{j'}\rightarrow S_{p-1}^{(2p)}\oplus S^{(j'+1)}_{j'}\rightarrow S_{p-1}^{(p)}\rightarrow0$ is exact, $g_{j}$ yields the follwing pull-back diagram
$$\xymatrix@C=5.2em@R=7ex@M=1pt@!0{
 0\ar[r] &S_{p-1}^{(p)} \ar[r]\ar@{=}[d]& S^{(j'+p+1)}_{j'} \ar[r]\ar[d]& S^{(j'+1)}_{j_{1}}  \ar[d]^-{g_{j}} \ar[r]& 0\\
 0\ar[r] & S_{p-1}^{(p)} \ar[r]&S_{p-1}^{(2p)}\ar[r]& S_{p-1}^{(p)} \ar[r]& 0. }
$$
It follows that the induced maps of $\Hom(g_{j}, S_{p-1}^{(p)} )$ and $\Hom(g_{j}, S_{p-1}^{(p)}[1])$ are both injective.
Therefore,  the conclusion also holds true by \cite[Prop.\;7.6]{kysi}.
We are done.
\end{proof}

\begin{lem}  ~\label{key object}
Let  $\mathcal{A}_{l}=\langle \tau^{l-1} S_{a}, \dots,  \tau S_{a} ,S_{a}\rangle\subsetneq\emph{\textbf{T}}_p$ as in (\ref{non-empty of simples}) with $1\leq l<p$. Then  each SMC\, $\mathcal{U}$  in $\mathcal{D}^{b}(\mathcal{A}_{l})$ has an element $S^{(t)}_{a}[k]$ with $1\leq t\leq l$ and $k\in \mathbb{Z}$.
\end{lem}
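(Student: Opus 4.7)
The plan is to combine the hereditary structure of $\mathcal{A}_l$ with a Grothendieck group argument. First I would record two structural facts about $\mathcal{A}_l \simeq \Mod\,\mathbb{A}_l$. Because this category is hereditary and Hom-finite, every object of $\mathcal{D}^b(\mathcal{A}_l)$ is (non-canonically) isomorphic to $\bigoplus_{k}H^{k}(-)[-k]$, so every indecomposable of $\mathcal{D}^b(\mathcal{A}_l)$ is of the form $M[k]$ with $M$ indecomposable in $\mathcal{A}_l$. Moreover the indecomposables of $\mathcal{A}_l$ are exactly the uniserials $S_{j}^{(t)}$ whose composition factor set lies in $\{S_{a-l+1},\ldots,S_{a}\}$, i.e. $a-l+1 \le j-t+1 \le j \le a$. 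Among these, the only ones whose Grothendieck class has nonzero $[S_a]$-coefficient are $S_{a}^{(1)},S_{a}^{(2)},\ldots,S_{a}^{(l)}$, because any other indecomposable has $j<a$ and so $S_a$ is not a composition factor at all.

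Next I would pin down the shape of the SMC elements. Since $\textbf{k}$ is algebraically closed and $\End(U_i)$ is a finite dimensional division $\textbf{k}$-algebra, we have $\End(U_i)=\textbf{k}$ and each $U_i$ is indecomposable. Combined with the hereditary splitting of the previous step, we may write
\[
\mathcal{U}=\{M_{1}[k_{1}],\ M_{2}[k_{2}],\ \ldots,\ M_{l}[k_{l}]\}
\]
with each $M_{i}$ an indecomposable of $\mathcal{A}_l$ and $k_{i}\in\mathbb{Z}$.

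Finally, I would apply the heart/SMC correspondence of Lemma~\ref{bijectionHeartandSmc}: the SMC $\mathcal{U}$ is precisely the set of simples of a finite length heart of a bounded t-structure on $\mathcal{D}^{b}(\mathcal{A}_{l})$, so the classes $[U_{1}],\ldots,[U_{l}]$ form a $\mathbb{Z}$-basis of $K_{0}(\mathcal{D}^{b}(\mathcal{A}_{l}))=K_{0}(\mathcal{A}_{l})\cong\mathbb{Z}^{l}$. The change-of-basis matrix between $\{[U_{i}]\}$ and the simple basis $\{[S_{a-l+1}],\ldots,[S_{a}]\}$ therefore lies in $\mathrm{GL}_{l}(\mathbb{Z})$, so its $[S_{a}]$-row cannot be identically zero. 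Hence some $[U_{i}]$ has nonzero $[S_{a}]$-coefficient, which by the first paragraph forces $M_{i}=S_{a}^{(t)}$ for some $1\le t\le l$, and we have produced the desired element $S_{a}^{(t)}[k_{i}]\in\mathcal{U}$. The only mildly nontrivial point is the invocation of the fact that an SMC descends to a $\mathbb{Z}$-basis of $K_{0}$; this is standard (it is essentially what makes SMCs ``simple-minded''), while the hereditary splitting and the composition-factor analysis are routine.
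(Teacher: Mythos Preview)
Your proof is correct, but the paper's argument is considerably shorter and uses only the thick-generation axiom of an SMC rather than the $K_0$-basis property. The paper argues by contradiction: if no element of $\mathcal{U}$ has the form $S_{a}^{(t)}[k]$, then every indecomposable summand of every $U_i$ lies in $\mathcal{D}^{b}(\mathcal{A}_{l-1})$, where $\mathcal{A}_{l-1}=\langle\tau^{l-1}S_{a},\ldots,\tau S_{a}\rangle$; hence $\thick(\mathcal{U})\subseteq\mathcal{D}^{b}(\mathcal{A}_{l-1})\subsetneq\mathcal{D}^{b}(\mathcal{A}_{l})$, contradicting axiom~(3). Both arguments rest on the same structural observation you isolate---that an indecomposable $S_{j}^{(t)}\in\mathcal{A}_{l}$ has $S_{a}$ as a composition factor if and only if $j=a$---but the paper exploits it via thick containment, while you pass through $K_{0}$ and invoke Lemma~\ref{bijectionHeartandSmc} to get a basis. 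The paper's route is more economical (one line, no heart correspondence needed); yours makes the linear-algebra content explicit and would generalise more readily to situations where one knows the $K_{0}$-classes of an SMC span without having an obvious proper thick subcategory at hand.
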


\begin{proof}
For contradiction, we let $\mathcal{A}_{l-1}:=\langle \tau^{l-1} S_{a}, \dots,  \tau S_{a}\rangle$. Then we have the thick subcategory  $\thick(\mathcal{U})\subseteq \mathcal{D}^{b} (\mathcal{A}_{l-1})$, a contradiction to $\thick(\mathcal{U}) =\mathcal{D}^{b} ( \mathcal{A}_{l})$. We are done.
\end{proof}

Let $\mathcal{X}= \{ X_{1}, X_{2}, \cdots,  X_{p} \}$ be a SMC in $\mathcal{D}^{b} (${\rm{\textbf{T}}}$_p)$. For convenience, we will simply denote the $l$ times of mutations
$\mu_{k_{l}}^{\pm}\circ\cdots \mu_{k_{2}}^{\pm}\circ\mu_{k_{1}}^{\pm} (\mathcal{X})$ by $\mu_{}^{\pm l} (\mathcal{X})$  if no confusions appear.
Let $\mathcal{X}_{0}=\{S_{0}, S_{1},\cdots, S_{p-1}\}$.

The following result plays a key role in this section.

\begin{prop} ~\label{equations of mutation}
Let $\mathcal{X}=\mathop{\cup}\limits_{i=1}^{n}\mathcal{X}_{i}\neq\mathcal{X}_{0}$
as in (\ref{collection of X}) be a SMC in $\mathcal{D}^{b} (${\rm{\textbf{T}}}$_p)$.
Then we have
\begin{equation} \label{mutated equations}
\mu_{}^{\pm l}(\mathcal{X})=\mathcal{X}_{0},\; l\geq1.
\end{equation}
\end{prop}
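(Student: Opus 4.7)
The plan is to prove the result by explicitly constructing a mutation sequence from $\mathcal{X}$ to $\mathcal{X}_{0}$ in two phases. In the first phase, for each $i$, I would mutate the block $F(\mathcal{U}_{i})$ of $\mathcal{X}$ so as to reduce the pre-SMC $\mathcal{U}_{i}$ to the ``standard'' SMC of simples $\{\tau^{l_{i}-1}S_{a_{i}},\,\ldots,\,S_{a_{i}}\}$ inside $\mathcal{D}^{b}(\mathcal{A}_{l_{i}})$. The key observation is that by Lemmas \ref{vanising homs} and \ref{isom homs} the objects in $F(\mathcal{U}_{i})$ are Hom-orthogonal to everything outside their block, so the minimal left/right approximations appearing in a mutation of $\mathcal{X}$ at an element of $F(\mathcal{U}_{i})$ depend only on that block. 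Hence such a mutation corresponds, via the assignment $F$, to an ordinary mutation of $\mathcal{U}_{i}$ inside $\mathcal{D}^{b}(\mathcal{A}_{l_{i}})$. The connectivity of the exchange graph of SMCs for Dynkin type $\mathbb{A}$ is classical, and in the present setting it can be re-derived by induction on $l_{i}$ using Lemma \ref{key object} (which always produces a ``top'' element $S^{(t)}_{a_{i}}[k]$ to mutate at) together with Lemma \ref{mutations of SMC}.

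In the second phase, after every $\mathcal{U}_{i}$ has been normalized, the collection $\mathcal{X}_{i}$ consists of some tube simples together with $S_{a_{i}}^{(l_{i}+1)}$ and a single shifted object $S_{a_{i}-1}^{(l_{i})}[1]=F(S_{a_{i}})$. The exchange triangle
$$S_{a_{i}}^{(l_{i}+1)}\longrightarrow S_{a_{i}}\longrightarrow S_{a_{i}-1}^{(l_{i})}[1]\longrightarrow S_{a_{i}}^{(l_{i}+1)}[1]$$
coming from (\ref{exchange triangle}) with $t=1$ shows that a right mutation at $S_{a_{i}-1}^{(l_{i})}[1]$ converts it into the simple $S_{a_{i}}$. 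Iterating with the sequences from Lemma \ref{exact seqs in tube}(2) (equivalently the triangles (\ref{exchange triangle}) with larger $t$) then lets us successively peel off the simples hidden inside $S_{a_{i}}^{(l_{i}+1)}$, converting $\mathcal{X}_{i}$ into a family of simple tube objects. Performing this for every $i$ yields $\mathcal{X}_{0}$.

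The main obstacle I anticipate is ensuring termination and keeping the mutation sequence finite: mutations can in principle create new shifted objects, so one has to identify a strictly decreasing complexity measure — e.g.\ the lexicographic pair $(\sum_{i}l_{i},\,\sum_{X\in\mathcal{X}}|k(X)|)$ where $k(X)$ is the shift degree of $X$ — and verify that each chosen mutation strictly decreases it. The delicate verification is how elements of $F(\mathcal{U}_{i})$ with negative shift $k_{m}<0$ (on which $F$ acts trivially) transform under mutation, since they still participate in the minimal approximations and could a priori destabilize the induction. This bookkeeping, rather than any conceptual step, is where I expect the bulk of the technical work to lie; the Hom-orthogonality of the blocks and the explicit exchange triangles of Lemma \ref{exact seqs in tube} provide all the structural input needed.
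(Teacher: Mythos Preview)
Your two-phase plan has a genuine gap in Phase~1: the ``block-local'' transfer claim --- that mutations of $\mathcal{X}$ at elements of $F(\mathcal{U}_i)$ correspond via $F$ to mutations of $\mathcal{U}_i$ inside $\mathcal{D}^b(\mathcal{A}_{l_i})$ and leave the tube element $S^{(l_i+1)}_{a_i}$ untouched --- is false at the boundary you cannot avoid. The assignment $F$ is not a triangulated equivalence; it applies a shift $[1]$ only to the $k_m\geq 0$ elements with top $S_{a_i}$, and a left mutation of $\mathcal{X}$ at an element $S^{(t)}_{a_i}[-1]$ (which lies on the $k_m<0$ side, unaltered by $F$) interacts non-trivially with $S^{(l_i+1)}_{a_i}$ through $\Hom(S^{(l_i+1)}_{a_i},S^{(t)}_{a_i})\neq 0$, while the corresponding mutation of $\mathcal{U}_i$ inside $\mathcal{D}^b(\mathcal{A}_{l_i})$ sees no such object. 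Since any mutation path reducing $\mathcal{U}_i$ to the standard simples in degree~$0$ must eventually pass through degree~$\pm 1$, this interaction is forced, and the two phases are not cleanly separable.

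The paper's proof embraces this interaction rather than trying to sidestep it, and so dispenses with Phase~1 entirely. For any $\mathcal{X}\neq\mathcal{X}_0$, Lemma~\ref{key object} supplies an element $S^{(t)}_{a_i}[k]$ in some $\mathcal{U}_i$; repeated mutation at that single element brings it to degree~$-1$ (or brings its $F$-image to degree~$1$), and then one further mutation uses the exchange triangle~(\ref{exchange triangle}) to split $S^{(l_i+1)}_{a_i}$ into the two tube objects $S^{(l_i+1-t)}_{a_i-t}$ and $S^{(t)}_{a_i}$, strictly increasing $\lvert\mathcal{X}_{\tube}\rvert$. This is precisely your Phase~2 mechanism, applied directly without first normalizing $\mathcal{U}_i$. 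Note that the strictly increasing invariant $\lvert\mathcal{X}_{\tube}\rvert = p-\sum_i l_i$ is your proposed first coordinate with the sign flipped --- you had the right complexity measure, but the route through Phase~1 was an unnecessary detour whose bookkeeping (which you correctly flagged as the main obstacle) is never actually needed.
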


\begin{proof}
By Propositions \ref{heart forms} and \ref{collection of simples of sub-tube}, we write $\mathcal{X}=\{S^{(t_{1})}_{j_{1}},\cdots, S^{(t_{r})}_{j_{r}}; S^{(t_{m})}_{j_{m}}[k_{m}]\mid r+1\leq m\leq p\}$ for a SMC such that  $\mathcal{X}_{\tube}=\{S^{(t_{1})}_{j_{1}}, \cdots, S^{(t_{r})}_{j_{r}}\}$ is a collection of simple objects in a tube subcategory $\textbf{T}_r$ with
$$\Hom^{1}(S^{(t_{1})}_{j_{1}}, S^{(t_{r})}_{j_{r}})\cong\Hom^{1}(S^{(t_{m+1})}_{j_{m+1}}, S^{(t_{m})}_{j_{m}})\cong\textbf{k}\, (1\leq m\leq r-1).$$
Since there exists $\mathcal{S}=\mathop{\cup}\limits_{i=1 }^{n}\{\tau^{k}S_{a_{i}} \mid0\leq k < l_{i}\}\neq\emptyset$, we have $S^{(t)}_{a_{i}}[k]\in\mathcal{U}$ as in (\ref{collection of X}) with some $1\leq i\leq n$, $1\leq t\leq l_{i}$ and $k\in\mathbb{Z}$ by Lemma \ref{key object} and hence either $F(S^{(t)}_{a_{i}})[k]\in\mathcal{X}$ with $k\geq0$ or $S^{(t)}_{a_{i}}[k]\in\mathcal{X}$ with $k<0$.
We only prove for the case  $S^{(t)}_{a_{i}}[k]\in\mathcal{X}$ with $k<0$ since the proof for $F(S^{(t)}_{a_{i}})[k]\in\mathcal{X}$ with $k\geq0$ is similar.

We claim there exists $l'\geq1$ such that $\mathcal{X}'_{\tube}\subset\mathcal{X}':=\mu^{+l'}(\mathcal{X})$ with $\lvert\mathcal{X}'_{\tube}\rvert=r+1$.
Since $S^{(l_{i}+1)}_{a_{i}}\in\mathcal{X}_{\tube}$ by Proposition \ref{collection of simples of sub-tube} and the indexes of objects in $\mathcal{X}_{\tube}$ are considered module $r$, we assume   $S^{(t_{i})}_{j_{i}}=S^{(l_{i}+1)}_{a_{i}}$.
Then after  left mutation $-k-1$ times of $\mathcal{X}$  at $S^{(t)}_{a_{i}}[k]$, we have $S^{(l_{i}+1)}_{a_{i}}, S^{(t)}_{a_{i}}[-1] \in\mathcal{X}'':=\mu_{}^{+(-k-1)}(\mathcal{X})$ with $\Hom^{1}(S^{(l_{i}+1)}_{a_{i}}, S^{(t)}_{a_{i}}[-1])\cong\textbf{k}$.

We now consider $\mu_{}^{+} (\mathcal{X}'')$ at $S^{(t)}_{a_{i}}[-1]$.
Then $\mu^{+}(\mathcal{X}'')(S^{(t)}_{a_{i}}[-1])=S^{(t)}_{a_{i}}$.
By the triangle  $\xi_{}: S^{(l_{i}+1)}_{a_{i}}[-1] \stackrel{g_{}}{\longrightarrow} S^{(t)}_{a_{i}}[-1] \rightarrow S^{(l_{i}+1-t)}_{a_{i}-t} \rightarrow S^{(l_{i}+1)}_{a_{i}}$ with the m.l.a\; $g$ of $S^{(l_{i}+1)}_{a_{i}}$ in  $\langle S^{(t)}_{a_{i}}[-1]\rangle$,
we get  $\mu^{+}(\mathcal{X}'')(S^{(l_{i}+1)}_{a_{i}})= S^{(l_{i}+1-t)}_{a_{i}-t}$.
On the other hand, we get $\mu^{+}(\mathcal{X}'')(S^{(t_{m})}_{j_{m}})=S^{(t_{m})}_{j_{m}}$ with $S^{(t_{m})}_{j_{m}}\in\mathcal{X}_{\tube}$ and $m\neq i$ since $\Hom^{0}(S^{(t_{m})}_{j_{m}}[-1], S^{(t)}_{a_{i}}[-1])=0$.
Applying $\Hom(S^{(t)}_{a_{i}}, -)$ to $\xi_{}$, we have an exact sequence
$$
\Hom^{1}(S^{(t)}_{a_{i}}, S^{(l_{i}+1)}_{a_{i}}[-1]) \rightarrow \Hom^{1}(S^{(t)}_{a_{i}}, S^{(t)}_{a_{i}}[-1]) \rightarrow \Hom^{1}(S^{(t)}_{a_{i}}, S^{(l_{i}+1-t)}_{a_{i}-t}) \rightarrow \Hom^{2}(S^{(t)}_{a_{i}}, S^{(l_{i}+1)}_{a_{i}}[-1]).
$$
It follows that
$\Hom^{1}(S^{(t)}_{a_{i}}, S^{(l_{i}+1-t)}_{a_{i}-t})\cong \Hom^{1}(S^{(t)}_{a_{i}}, S^{(t)}_{a_{i}}[-1])\cong\textbf{k}$ since $\Hom^{1}(S^{(t)}_{a_{i}}, S^{(l_{i}+1)}_{a_{i}}[-1])=0=$ $\Hom^{2}(S^{(t)}_{a_{i}}$, $S^{(l_{i}+1)}_{a_{i}}[-1])$.
By similar arguments as above,  we get
$\Hom^{1}(S^{(t_{i+1})}_{j_{i+1}}, S^{(t)}_{a_{i}})\cong \Hom^{1}(S^{(t_{i+1})}_{j_{i+1}}, S^{(t_{i})}_{j_{i}}))\cong\textbf{k}$,
and $\Hom^{1}(S^{(l_{i}+1-t)}_{a_{i}-t}, S^{(t_{i-1})}_{j_{i-1}})\cong\Hom^{0}(S^{(t_{i})}_{j_{i}}[-1],  S^{(t_{i-1})}_{j_{i-1}})\cong\textbf{k}$.
It follows that the subset
$$\{S^{(t_{1})}_{j_{1}}, \cdots, S^{(t_{i-1})}_{j_{i-1}}, S^{(l_{i}+1-t)}_{a_{i}-t}, S^{(t)}_{a_{i}}, S^{(t_{i+1})}_{j_{i+1}}, \cdots,  S^{(t_{r})}_{j_{r}}\}
$$
of $\mathcal{X}':=\mu^{+}(\mathcal{X}'')$ is precisely $\mathcal{X}'_{\tube}$ with $\lvert\mathcal{X}'_{\tube}\rvert=r+1$ as in Proposition \ref{collection of simples of sub-tube}.
This proves our claim.

By replacing $\mathcal{X}$ by $\mathcal{X}'=\mu^{\pm l'}(\mathcal{X})$, and keeping the procedure going on step by
step, we finally obtain  $\mathcal{X}'_{\tube}\subseteq\mathcal{X}$ with $\lvert\mathcal{X}'_{\mathrm{tube}}\rvert=p$.
It follows that there exists $l\geq1$ such that $\mu_{}^{\pm l}(\mathcal{X})=\mathcal{X}_{0}$.
We are done.
\end{proof}

\subsection{2-term SMCs}
Following \cite{brus} a SMC  $\mathcal{X}= \{ X_{1}, X_{2}, \cdots,  X_{p} \}$   in $\mathcal{D}^{b} (\textbf{T}_p)$ is \emph{2-term} if $H^{i}(X_{j})$ vanishes for any $i\neq0,-1$ and any $1\leq j\leq p$, where $H^{i}(X)$ is the $i$-th cohomology object of $X$.
We put
\begin{equation*}
2{-}\rm{SMC}(\textbf{T}_p)= the\ set\ of\ isoclasses\ of\ 2{-}term\ simple{-}minded\ collections\ in\ \mathcal{D}^{b}(\textbf{T}_p).
\end{equation*}

Using the definition of $2$-term SMC, we have the following result.

\begin{lem} ~\label{mutations of 2-SMC}
Let  $\mathcal{X}= \{ X_{1}, X_{2}, \cdots,  X_{p} \}$ be a $2$-term SMC  in $\mathcal{D}^{b} (\emph{\textbf{T}}_p)$. For any $1\leq i\leq p$, we have
\begin{itemize}
\item [(1)] $\mu_{i}^{+} (X)$ is $2$-term if $H^{0}(X_{i})\in\emph{\textbf{T}}_p$;
\item [(2)] $\mu_{i}^{-}(X)$ is $2$-term if $H^{-1}(X_{i})\in\emph{\textbf{T}}_p$.
\end{itemize}
\end{lem}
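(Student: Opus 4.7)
The plan is to verify the 2-term property of the mutated collection element-by-element, treating the index $i$ separately from the other indices.

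For part (1), the hypothesis $H^0(X_i)\in\textbf{T}_p$ together with the 2-term assumption forces $X_i$ itself to be concentrated in degree zero (the only other allowed cohomology, $H^{-1}(X_i)$, must vanish). Hence $\mu_i^+(X_i)=X_i[1]$ sits entirely in degree $-1$, which is plainly 2-term. For $j\ne i$, invoke the defining triangle
\[
X_j[-1]\xrightarrow{\,g_j\,}X_{ij}\longrightarrow X_j'\longrightarrow X_j
\]
coming from the minimal left approximation. Since $X_i\in\textbf{T}_p$, the extension closure $\mathcal{X}_i$ is contained in $\textbf{T}_p$, so $X_{ij}$ has cohomology concentrated in degree $0$. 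On the other hand, because $X_j$ is 2-term, $X_j[-1]$ has cohomology only in degrees $0$ and $1$, with values $H^{-1}(X_j)$ and $H^0(X_j)$ respectively. The long exact cohomology sequence
\[
\cdots\to H^{n-1}(X_j)\to H^n(X_{ij})\to H^n(X_j')\to H^n(X_j)\to H^{n+1}(X_{ij})\to\cdots
\]
then forces $H^n(X_j')=0$ for every $n\notin\{-1,0\}$, so $X_j'$ is 2-term.

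Part (2) follows by the dual argument. Under right mutation, $\mu_i^-(X_i)=X_i[-1]$, which is 2-term precisely when the hypothesis $H^{-1}(X_i)\in\textbf{T}_p$ holds (i.e.\ $X_i$ is concentrated in degree $-1$, so $X_i\in\textbf{T}_p[1]$), whence $X_{ij}$ lies in $\mathcal{X}_i\subseteq\textbf{T}_p[1]$ and has cohomology concentrated in degree $-1$. For $j\ne i$, the triangle arising from the minimal right approximation, combined with the same cohomology long exact sequence, forces $H^n(X_j')=0$ outside $\{-1,0\}$.

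The proof is essentially cohomological bookkeeping once the long exact sequence is set up; the only nontrivial point is that the hypothesis on $X_i$ is used precisely to ensure that the approximating object $X_{ij}\in\mathcal{X}_i$ sits in a single cohomological degree, which in turn keeps the mutated element $X_j'$ inside the 2-term window. I do not foresee a genuine obstacle beyond verifying the two boundary cases of the long exact sequence carefully.
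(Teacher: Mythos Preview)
Your proposal is correct and is essentially the argument the paper has in mind: the paper states the lemma with only the remark ``Using the definition of $2$-term SMC, we have the following result'' and gives no further proof, so your cohomology long exact sequence argument is precisely the routine verification being omitted. One small point worth making explicit in your write-up: the step ``$H^0(X_i)\in\textbf{T}_p$ forces $X_i$ to be concentrated in degree $0$'' uses not just the 2-term hypothesis but also that $\textbf{T}_p$ is hereditary (so $X_i\cong H^0(X_i)\oplus H^{-1}(X_i)[1]$) together with the SMC condition $\End(X_i)\cong\textbf{k}$, which forces $X_i$ to be indecomposable.
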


\subsection{Exchange graphs}

The \emph{exchange graph } $\EG(\textbf{T}_{p})$ of SMCs in $\mathcal{D}^{b}(\textbf{T}_p)$ is
the oriented graph whose vertices are all SMCs in $\mathcal{D}^{b}(\textbf{T}_p)$ and whose edges correspond to left mutations between them, cf.\cite{kingqi}.

We denote $\EG(2$-SMC$(\textbf{T}_p))$ by the full subgraph of $\EG(\textbf{T}_{p})$ with all vertices are all 2-term SMCs in $\mathcal{D}^{b}(\textbf{T}_p)$.
Recall that $\mathcal{X}_{0}=\{S_{0}, S_{1},\cdots, S_{p-1}\}$.

\begin{lem} ~\label{finiteness and connectness of 2-SMCs}
The set $2$-$\emph{SMC}(\emph{\textbf{T}}_p)$ of $2$-term  SMCs  in $\mathcal{D}^{b}(\emph{\textbf{T}}_p)$ is finite.
Moreover, the subgraph \emph{EG}$(2$-$\emph{SMC}(\emph{\textbf{T}}_p))$ of $\EG(\emph{\textbf{T}}_{p})$ is connected.
\end{lem}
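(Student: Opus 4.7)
For finiteness, I plan to use the classification in Proposition~\ref{heart forms} together with the computation $\End(S^{(t)}_{j})\cong\textbf{k}$ for $t\le p$ noted in the proof of Lemma~\ref{construct SMCs}. Every element of an SMC in $\mathcal{D}^{b}(\textbf{T}_p)$ is isomorphic to some $S^{(t)}_{j}[k]$ with $1\le t\le p$, $0\le j\le p-1$, and $k\in\mathbb{Z}$; the $2$-term condition $H^{i}(X_{m})=0$ for $i\notin\{0,-1\}$ further restricts $k$ to $\{0,1\}$. Hence every $2$-term SMC is a $p$-element subset of a fixed finite set of $2p^{2}$ indecomposables, and $|2\text{-SMC}(\textbf{T}_p)|<\infty$ follows at once.

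For the connectedness of $\text{EG}(2\text{-SMC}(\textbf{T}_p))$, the plan is to show that every $2$-term SMC $\mathcal{X}$ can be joined to $\mathcal{X}_{0}=\{S_{0},\ldots,S_{p-1}\}$ by a path of $2$-term-preserving mutations. The key input is Lemma~\ref{mutations of 2-SMC}: $\mu^{+}$ at a degree-$0$ element and $\mu^{-}$ at a degree-$(-1)$ element both keep the SMC $2$-term. I will argue by reverse induction on the tube rank $r=|\mathcal{X}_{\text{tube}}|$ attached to $\mathcal{X}$ via Proposition~\ref{collection of simples of sub-tube}; the base case $r=p$ is precisely $\mathcal{X}=\mathcal{X}_{0}$.

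For the inductive step $(r<p)$, Theorem~\ref{bijection of hearts and pre-smcs} attaches a non-empty pre-SMC $\mathcal{U}=\bigcup\mathcal{U}_{i}$ to $\mathcal{X}$, and Lemma~\ref{key object} applied to a non-empty $\mathcal{U}_{i}$ supplies an element $S^{(t)}_{a_{i}}[k]$ whose top is $S_{a_{i}}$. The $2$-term constraint forces $k=0$ (any negative $k$ would leave the associated element of $\mathcal{X}$ outside degrees $\{0,-1\}$), so $\mathcal{X}$ contains the degree-$(-1)$ object $Y:=F(S^{(t)}_{a_{i}})=S^{(l_{i}+1-t)}_{a_{i}-t}[1]$. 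Then $\mathcal{X}':=\mu^{-}_{Y}(\mathcal{X})$ is again $2$-term by Lemma~\ref{mutations of 2-SMC}. Mirroring the computation in the proof of Proposition~\ref{equations of mutation}, the rotated triangle
\[
S^{(t)}_{a_{i}}\longrightarrow Y\longrightarrow S^{(l_{i}+1)}_{a_{i}}[1]\longrightarrow S^{(t)}_{a_{i}}[1]
\]
identifies $\mu^{-}_{Y}(S^{(l_{i}+1)}_{a_{i}})=S^{(t)}_{a_{i}}$ and $\mu^{-}_{Y}(Y)=S^{(l_{i}+1-t)}_{a_{i}-t}$, while Lemma~\ref{vanising homs} shows the remaining elements of $\mathcal{X}$ are unaffected. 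Applying Proposition~\ref{collection of simples of sub-tube} to $\mathcal{X}'$ then exhibits a tube subcategory of rank $r+1$, closing the induction.

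The main obstacle will be the verification that $|\mathcal{X}'_{\text{tube}}|=r+1$: one must match $\mathcal{X}'$ against the combinatorial form of Proposition~\ref{heart forms} with a strictly smaller $\mathcal{S}'\subsetneq\mathcal{S}$, and confirm that both new degree-$0$ elements $S^{(t)}_{a_{i}}$ and $S^{(l_{i}+1-t)}_{a_{i}-t}$ actually enter the new maximal tube, via the $\Ext^{1}$-criterion already used in the proof of Proposition~\ref{collection of simples of sub-tube}. Once this matching is in place, the induction terminates in at most $p$ steps, and every $2$-term SMC is linked to $\mathcal{X}_{0}$ within $\text{EG}(2\text{-SMC}(\textbf{T}_p))$, giving the desired connectedness.
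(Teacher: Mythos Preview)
Your finiteness argument coincides with the paper's. For connectedness, however, the paper takes a different and more abstract route: it does not track the tube rank at all, but simply keeps left-mutating at \emph{any} available degree-$0$ element (which stays $2$-term by Lemma~\ref{mutations of 2-SMC}(1)), arguing that this process cannot cycle and hence, by finiteness, must terminate at the unique $2$-term SMC with no degree-$0$ member, namely $\mathcal{X}_{0}[1]$. Your approach is more constructive, essentially specialising the rank-increasing mechanism from the proof of Proposition~\ref{equations of mutation} to the $2$-term window; it yields an explicit bound on the path length to $\mathcal{X}_{0}$, whereas the paper's argument leans on an implicit partial order on intermediate hearts to exclude longer cycles. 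Two small repairs are needed in your outline: (i) the parenthetical only excludes $k<0$, but you must also rule out $k\geq 1$ (equally immediate, since then $F(S^{(t)}_{a_{i}})[k]$ lies in shift $\geq 2$); (ii) the claim that Lemma~\ref{vanising homs} leaves \emph{all} remaining elements of $\mathcal{X}$ fixed under $\mu^{-}_{Y}$ is too strong---other members of $F(\mathcal{U}_{i})$ may move---but you only need that the elements of $\mathcal{X}_{\text{tube}}\setminus\{S^{(l_{i}+1)}_{a_{i}}\}$ are fixed (this follows directly from Lemma~\ref{non-zero morphism between objects}, since their composition-factor sets are disjoint from that of $Y[-1]$). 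Together with $S^{(t)}_{a_{i}}$ and $S^{(l_{i}+1-t)}_{a_{i}-t}$ these already form the simples of a rank-$(r+1)$ tube inside $\mathcal{X}'$, so by the maximality in Proposition~\ref{collection of simples of sub-tube} one gets $|\mathcal{X}'_{\text{tube}}|\geq r+1$, and your induction goes through without the combinatorial matching you flag as the main obstacle.
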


\begin{proof}
The first statement follows from the set of indecomposable objects $S^{(t)}_{j}[k]$ with $t\leq p$ and $k=0, 1$ is finite.
Let  $\mathcal{X}=\{S^{(t_{m})}_{j_{m}}[k_{m}]\mid1\leq m\leq p\}$ be a 2-term SMC such that $\mathcal{X}\neq\mathcal{X}_{0}[k]$, where $k=0,1$.
Note that $\mathcal{X}=\mathcal{X}_{0}[k]$ if $k_{m}=k$ for any $1\leq m\leq p$ by Propositions \ref{heart forms} and \ref{collection of simples of sub-tube}.
Then there exists some $1\leq m\leq p$ such that $k_{m}=0$. By Lemma \ref{mutations of 2-SMC}, $\mu_{m}^{+}(\mathcal{X})$ is 2-term.
Since $\mu_{m'}^{+}(\mu_{m}^{+} (\mathcal{X}))(S^{(t_{m})}_{j_{m}})\neq S^{(t_{m})}_{j_{m}}$ for any $m'\neq m$ and then $\mu_{m'}^{+}(\mu_{m}^{+} (\mathcal{X}))\neq\mathcal{X}$, we can keep left mutations of $\mathcal{X}$ at $S^{(t_{m})}_{j_{m}}[k_{m}]$ with $k_{m}=0$ step by step.
It follows that $\mu_{}^{+l}(\mathcal{X})=\mathcal{X}_{0}[1]$ for some $l\geq1$ since $2$-SMC$(\textbf{T}_p)$ is finite.
Similarly, we have $\mu_{}^{-l}(\mathcal{X})=\mathcal{X}_{0}$ for some $l\geq1$.
We are done.
\end{proof}

Exchange graph of hearts is a key point in the study of Bridgeland stability conditions.
Note that $\EG(Q)$ is connected if $Q$ is a quiver  of Dynkin type, while it is disconnected  if $Q$ is non-Dynkin, cf. \cite{qsta}.

We are now in a position to show the connectedness of $\EG(\textbf{T}_p)$.

\begin{thm} \label{connected of EG(Tp)}
The  exchange  graph $\EG(\emph{\textbf{T}}_p)$ of SMCs in $\mathcal{D}^{b} (${\rm{\textbf{T}}}$_p)$ is connected.
\end{thm}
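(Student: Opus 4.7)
The plan is to use $\mathcal{X}_0 := \{S_0, S_1, \ldots, S_{p-1}\}$ as a central hub. Connectedness of the underlying undirected graph of $\EG(\textbf{T}_p)$ reduces to two claims: (i) every SMC can be joined by a finite sequence of mutations to some shift $\mathcal{X}_0[k]$; and (ii) all the $\mathcal{X}_0[k]$, $k \in \mathbb{Z}$, lie in one component of $\EG(\textbf{T}_p)$.

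For claim (i), I would invoke the classification from Proposition \ref{heart forms}: up to the shift action, every SMC is either $\mathcal{X}_0$ itself or of the form $\cup_{i=1}^{n}\mathcal{X}_i$ as in (\ref{collection of X}). Given an arbitrary SMC $\mathcal{Y}$, choose $k \in \mathbb{Z}$ so that $\mathcal{Y}[-k]$ falls into one of these two shapes. If $\mathcal{Y}[-k] = \mathcal{X}_0$, then $\mathcal{Y} = \mathcal{X}_0[k]$ and nothing is to be done. Otherwise Proposition \ref{equations of mutation} supplies $l \geq 1$ and a composition of $l$ left or right mutations carrying $\mathcal{Y}[-k]$ to $\mathcal{X}_0$. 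Because $\mu_i^{-}$ is inverse to $\mu_i^{+}$ at the corresponding vertex, each such step is an edge of the underlying undirected graph of $\EG(\textbf{T}_p)$. Translating the resulting finite path by $[k]$, and using that each $\mu_i^{\pm}$ commutes with the shift functor, yields a path from $\mathcal{Y}$ to $\mathcal{X}_0[k]$.

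For claim (ii), I would rely on Lemma \ref{finiteness and connectness of 2-SMCs}: the subgraph $\EG(2$-SMC$(\textbf{T}_p))$ of $\EG(\textbf{T}_p)$ is connected, and both $\mathcal{X}_0$ and $\mathcal{X}_0[1]$ are $2$-term SMCs, so a path inside this subgraph joins them. Shifting this path by $k$ produces a path from $\mathcal{X}_0[k]$ to $\mathcal{X}_0[k+1]$ for every $k$, and concatenating these paths embeds $\{\mathcal{X}_0[k] : k \in \mathbb{Z}\}$ into a single component.

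Combining (i) and (ii), every SMC lies in the connected component of $\mathcal{X}_0$, so $\EG(\textbf{T}_p)$ is connected. The substantive content of the argument—the reduction of an arbitrary SMC to $\mathcal{X}_0$—has already been carried out in Proposition \ref{equations of mutation}, so the main potential obstacle was simply the bookkeeping across shift classes; this is dispatched cleanly by Lemma \ref{finiteness and connectness of 2-SMCs}, and I do not anticipate further difficulties.
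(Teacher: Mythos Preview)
Your proposal is correct and follows essentially the same approach as the paper: both arguments use Proposition~\ref{heart forms} to normalize an arbitrary SMC up to shift, invoke Proposition~\ref{equations of mutation} to mutate it to $\mathcal{X}_0$, and then appeal to Lemma~\ref{finiteness and connectness of 2-SMCs} together with shift-equivariance of mutation to connect the various $\mathcal{X}_0[k]$.
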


\begin{proof}
Up to shift actions, each SMC $\mathcal{X}\neq\mathcal{X}_{0}$ in $\EG(\textbf{T}_p)/ \mathbb{Z}$ can be representated by $\mathcal{X}=\mathop{\cup}\limits_{i=1}^{n}\mathcal{X}_{i}$
as in (\ref{collection of X}) by Proposition \ref{heart forms}.
Since $\mu_{}^{\pm l}(\mathcal{X})=\mathcal{X}_{0}$ and $\mu_{}^{\pm l} (\mathcal{X}[1])=\mathcal{X}_{0}[1]$ for some $l\geq1$ by Proposition \ref{equations of mutation} and $\mu_{}^{+l'}(\mathcal{X}_{0})=\mathcal{X}_{0}[1]$ for some $l'\geq1$ by Lemma \ref{finiteness and connectness of 2-SMCs}, the subgraph of vertices $\mathcal{X}[m], \mathcal{X}_{0}[m], m\in\mathbb{Z}$ has the form
$$
\xymatrix@C=2.6em@R=5.1ex@M=0.1pt@!0{
\cdots&\mathcal{X}[-1]\ar@{->}[rd]^{\mu_{}^{\pm l}}&&&\mathcal{X}\ar@{->}[d]^-{\mu_{}^{\pm l}}&&
&&\mathcal{X}[1]\ar@{->}[ld]_-{\mu_{}^{\pm l}}&\cdots\\
\cdots&&\mathcal{X}_{0}[-1]\ar@{->}[rr]^-{\mu_{}^{+l'_{}}}&&
\mathcal{X}_{0}\ar@{->}[rrr]^-{\mu_{}^{+l'_{}}}&&&\mathcal{X}_{0}[1]&&\dots
&&  \\ }
$$
It follows that $\EG(\textbf{T}_p)$ is connected and we are done.
\end{proof}

In the following we provide
a concrete example of exchange graph of SMCs in $\mathcal{D}^{b} (${\rm{\textbf{T}}}$_2)$.

\begin{exam}
The  exchange  graph $\EG(\textbf{T}_2)$ of SMCs in $\mathcal{D}^{b} (${\rm{\textbf{T}}}$_2)$ has the form:
$$
\xymatrix@C=2.55em@R=4.1ex@M=0.001pt@!0{
\ddots&&&&\vdots&&&\vdots&&&\vdots&&&\vdots&&&&
\begin{sideways}$\ddots$\end{sideways}&&&&&  \\
&\ar@{->}[rrrdd]^{}&&&\ar@{->}[rrrdd]^{}&&&\ar@{->}[rrrdd]^{}&&&\ar@{->}[rrrdd]^{}&&&\ar@{->}[rrrdd]^{}&&&&&&& && &  \\
&&&&&&&& && &&&&&&&&& && &  \\
\cdots&\ar@{->}[rrruu]^{}\ar@{->}[rrrdd]^{}&&&\{S^{(2)}_{1},\, S_{1}[-2]\}\ar@{->}[rrruu]^{}\ar@{->}[rrrdd]^{}&&&
\{S^{(2)}_{1}[-1],\, S_{0}[1]\}\ar@{->}[rrruu]^{}\ar@{->}[rrrdd]^{}&&&
\{S^{(2)}_{1}[1],\, S_{1}[-1]\}\ar@{->}[rrruu]^{}\ar@{->}[rrrdd]^{}&&&
\{S^{(2)}_{1},\, S_{0}[2]\}\ar@{->}[rrruu]^{}\ar@{->}[rrrdd]^{}&&&&\cdots&&  \\
&&&&&&&& && &&&&&&&&& && &  \\
\cdots&\ar@{->}[rrruu]^{}\ar@<-0.2ex>[rrd]^{}&&&\quad\quad\{S^{(2)}_{1}[-1],\, S_{0}\}\ar@{->}[rrruu]^{}\xrightarrow{}&&&\quad\quad\{S^{(2)}_{1},\, S_{1}[-1]\}\ar@{->}[rrruu]^{}\ar@<0.1ex>[rrd]^{}&&&
\quad\quad\{S^{(2)}_{1},\, S_{0}[1]\}\ar@{->}[rrruu]^{}\xrightarrow{\quad}&&&
\{S^{(2)}_{1}[1],\, S_{1}\}\ar@{->}[rrruu]^{}\ar@<0.2ex>[rrd]^{}&&&&\cdots&&  \\
&&&\{S_{0}[-1],\, S_{1}[-1]\}\ar@<-0.2ex>[rd]^{}\ar@<0.4ex>[ru]^{}&&&&&&
\{S_{0},\, S_{1}\}\ar@<-0.2ex>[rd]^{}\ar@<0.2ex>[ru]^{}\quad&&&&&&
\{S_{0}[1],\, S_{1}[1]\}\ar@<-0.1ex>[rd]^{}\ar@<-0.6ex>[ru]^{}&&&& \\
\cdots&\ar@{->}[rrrdd]^{}\ar@<-0.2ex>[rru]^{}&&&\{S^{(2)}_{0}[-1],\, S_{1}\}\ar@{->}[rrrdd]^{}\xrightarrow{}&&&\{S^{(2)}_{0},\, S_{0}[-1]\}\ar@{->}[rrrdd]^{}\ar@<-0.02ex>[rru]^{}&&&\{S^{(2)}_{0},\, S_{1}[1]\}\ar@{->}[rrrdd]^{}\xrightarrow{\quad}&&&\{S^{(2)}_{0}[1],\, S_{0}\}\ar@{->}[rrrdd]^{}\ar@<-0.0001ex>[rru]^{}&&&&\cdots&&  \\
&&&&&&&& && &&&&&&&&& && &  \\
\cdots&\ar@{->}[rrrdd]^{}\ar@{->}[rrruu]^{}&&&\{S^{(2)}_{0},\, S_{0}[-2]\}\ar@{->}[rrrdd]^{}\ar@{->}[rrruu]^{}&&&
\ar@{->}[rrrdd]^{}\{S^{(2)}_{0}[-1],\, S_{1}[1]\}\ar@{->}[rrruu]^{} &&&\{S^{(2)}_{0}[1],\,S_{0}\}\ar@{->}[rrrdd]^{}\ar@{->}[rrruu]^{}&&&
\{S^{(2)}_{0},\,S_{1}[2]\}\ar@{->}[rrrdd]^{}\ar@{->}[rrruu]^{}&&&&\cdots&& && &  \\
&&&&&&&& && &&&&&&&&& && &  \\
&\ar@{->}[rrruu]^{}&&&
\ar@{->}[rrruu]^{}&&&\ar@{->}[rrruu]^{}&&&
\ar@{->}[rrruu]^{}&&&\ar@{->}[rrruu]^{}&&&&& && &  \\
&\begin{sideways}$\ddots$\end{sideways}\quad\quad\quad\quad\quad\quad&&&\vdots&&&
\vdots&&&\vdots&&&\vdots&&&&\quad\ddots&& && &  \\ }
$$
It turns out that we can obtain any
SMC in $\mathcal{D}^{b} (${\rm{\textbf{T}}}$_2)$ by a sequence of mutations from $\mathcal{X}_{0}=\{S_{0}, S_{1}\}$.
\end{exam}

\subsection{Bridgeland stability conditions }
As an application, we will show  the space $\Stab(\textbf{T}_p)$ of  Bridgeland stability conditions on $\mathcal{D}^{b}(${\rm{\textbf{T}}}$_p)$ is connected via $\EG(\textbf{T}_p)$.

\begin{lem}\label{finitely many extension closed subcategory in each heart}
There are only finitely many subcategories of each heart in $\mathcal{D}^{b}(\emph{\textbf{T}}_p)$ which are closed under extensions and direct summands.
\end{lem}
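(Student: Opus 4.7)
The plan is to reduce the count of extension-and-summand-closed subcategories in each heart to finite counts in simpler factors. By Theorem~\ref{bijection of hearts and pre-smcs} and Proposition~\ref{collection of simples of sub-tube}, every heart $\mathcal{H}$ in $\mathcal{D}^{b}(\textbf{T}_p)$, up to shift, is generated by a SMC $\mathcal{X} = \mathcal{X}_{\tube} \cup (\mathcal{X} \setminus \mathcal{X}_{\tube})$, where $\mathcal{X}_{\tube}$ consists of the simples of a maximal tube subcategory $\textbf{T}_r \subseteq \mathcal{H}$ and the rest come from Dynkin $\mathbb{A}$-type pre-SMCs. I would first show, using the Hom vanishings of Lemma~\ref{vanising homs} at the simple level together with a standard d\'evissage along composition series, that $\mathcal{H}$ decomposes as an abelian category as a product
\[
\mathcal{H} \;\simeq\; \textbf{T}_r \;\times\; \mathcal{A}_{l_1} \;\times\; \cdots \;\times\; \mathcal{A}_{l_n},
\]
with each $\mathcal{A}_{l_i} \simeq \mod\mathbb{A}_{l_i}$ and $r + \sum_i l_i = p$.

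Any subcategory of such a product closed under extensions and direct summands is itself a product of such subcategories in each factor, so the total count factors as a product across the factors. For each Dynkin factor $\mathcal{A}_{l_i}$, Gabriel's theorem provides only $\binom{l_i+1}{2}$ indecomposables, and every additive ext-summand-closed subcategory is determined by the subset of indecomposables it contains; this gives at most $2^{\binom{l_i+1}{2}}$ options, hence finitely many.

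The main step, and the principal obstacle, is showing finiteness for the tube factor $\textbf{T}_r$, which has infinitely many indecomposables $S^{(t)}_j$. My plan is to use iterated exchange sequences generalizing Lemma~\ref{exact seqs in tube}(3), namely $0 \to S^{(a)}_j \to S^{(a+c)}_{j+c} \oplus S^{(a-c)}_j \to S^{(a)}_{j+c} \to 0$ for $1 \leq c \leq a$ (which can be derived iteratively from the $c=1$ case), combined with the cyclic symmetry of $\textbf{T}_r$. The key observation is that once an indecomposable $S^{(a)}_j$ with $a \geq r$ lies in an ext-summand-closed subcategory $\mathcal{C}$, the self-extension sequences with $c$ a multiple of $r$ force $S^{(a \pm kr)}_j \in \mathcal{C}$ for all valid $k$ via summand closure; together with the constraints on short-length indecomposables, this shows that $\mathcal{C}$ is determined by finite combinatorial data---its simple support and a bounded set of minimal-length indecomposables on each ray of the Auslander--Reiten quiver of $\textbf{T}_r$. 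Hence the tube factor admits only finitely many ext-summand-closed subcategories, and combining the finite counts across all factors yields the desired finiteness in $\mathcal{H}$. The technical difficulty is verifying that every middle-term decomposition of the relevant extensions stays within the pattern dictated by this combinatorial data, using Lemma~\ref{non-zero morphism between objects} to rule out ``off-pattern'' summands appearing via the rectangle-type middle terms; this is ultimately a combinatorial exercise on the Auslander--Reiten quiver of $\textbf{T}_r$.
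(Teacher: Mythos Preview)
Your proposed product decomposition $\mathcal{H} \simeq \textbf{T}_r \times \mathcal{A}_{l_1} \times \cdots \times \mathcal{A}_{l_n}$ is false, and the whole argument rests on it. Lemma~\ref{vanising homs} only gives one-sided vanishing (objects of $\langle\mathcal{S}\rangle_{\mathcal{D}}$ lie in the right perpendicular of the tube simples $S^{(l_i+1)}_{a_i}$ and $S_b$), not full $\Hom$-orthogonality. In the paper's own example at the end of Section~3, the heart in $\mathcal{D}^b(\textbf{T}_7)$ has simples $S^{(4)}_5 \in \mathcal{X}_{\tube}$ and $S_5[-1] \in \mathcal{X}\setminus\mathcal{X}_{\tube}$, and
\[
\Ext^1_{\mathcal{H}}(S^{(4)}_5,\,S_5[-1]) \;=\; \Hom_{\textbf{T}_7}(S^{(4)}_5,\,S_5) \;\neq\; 0
\]
by Lemma~\ref{non-zero morphism between objects}. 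More generally, the Ext-quiver classification in Section~5 shows these quivers are \emph{connected} graded gentle one-cycle quivers, so no nontrivial heart splits as a product. Since your count of ext-summand-closed subcategories factors through this product, the argument collapses.

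The paper's proof avoids decomposing $\mathcal{H}$ altogether. It works ambiently in $\mathcal{D}^b(\textbf{T}_p)=\bigvee_{k}\textbf{T}_p[k]$: a bounded heart meets only finitely many shifts $\textbf{T}_p[k]$, and within each shift one shows $\langle S_j^{(rp)}\rangle = \langle S_j^{(p)}\rangle$ and $\langle S_j^{(rp+l)}\rangle = \langle S_j^{(p+l)}\rangle$ for all $r\geq 1$ (this is the stabilization you were aiming at in your tube step, and it is the content of \cite[Lem.~4.5]{stab}). Hence any ext-summand-closed $\mathcal{C}\subseteq\mathcal{H}$ is determined by which indecomposables of length $<2p$ it contains in each of finitely many shifts---a finite amount of data. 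Your intuition for the tube factor was on the right track; the fix is to run that periodicity argument directly on the indecomposables of $\mathcal{H}$ inside $\bigvee_k \textbf{T}_p[k]$, rather than through a (nonexistent) product decomposition.
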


\begin{proof}
By similar arguments as in \cite[Lem.  4.5]{stab},
we obtain that $\langle S_{j}^{(rp)}\rangle=\langle S_{j}^{(p)}\rangle$, and $\langle S_{j}^{(rp+l)}\rangle=\langle S_{j}^{(p+l)}\rangle$ for  any $r\geq 1$,  $0\leq j\leq p-1$ and $1\leq l\leq p-1$. Note that there are only finitely many indecomposable objects in $\textbf{T}_p$ with length smaller than $2p$.
Moreover, there are only finitely many subcategories $\textbf{T}_p[k]$ contained in a heart $\mathcal{H}$ since $\mathcal{H}\subsetneq\mathcal{D}^{b}(\textbf{T}_p)=
\mathop{\bigvee}\limits_{k\in\mathbb{Z}}^{}\textbf{T}_p[k]$.
Then the result follows.
\end{proof}

Recall that for any torsion pair $(\mathcal{T}, \mathcal{F})$ in a heart, we have both $\mathcal{T}$ and $\mathcal{F}$ are closed under extensions and direct summands.
Then Lemma \ref{finitely many extension closed subcategory in each heart} implies the following result.

\begin{lem}\label{finitely many torsion pairs in each heart}
There are only finitely many torsion pairs in every heart of $\mathcal{D}^{b}(\emph{\textbf{T}}_p)$.
\end{lem}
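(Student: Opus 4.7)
My plan is to deduce the result as an almost immediate consequence of Lemma~\ref{finitely many extension closed subcategory in each heart}, by reducing the counting of torsion pairs to the counting of their torsion classes.

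First I would recall that a torsion pair $(\mathcal{T},\mathcal{F})$ in a heart $\mathcal{H}$ is, by definition, a pair of full subcategories such that $\Hom_{\mathcal{H}}(\mathcal{T},\mathcal{F})=0$ and every object $X\in\mathcal{H}$ fits in a (unique up to isomorphism) short exact sequence $0\to T_X \to X\to F_X\to 0$ with $T_X\in\mathcal{T}$, $F_X\in\mathcal{F}$. In particular, $\mathcal{T}$ is closed under extensions, quotients and direct summands, while $\mathcal{F}$ is closed under extensions, subobjects and direct summands; in particular both classes are closed under extensions and under direct summands in the sense of the hypothesis of Lemma~\ref{finitely many extension closed subcategory in each heart}.

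Next I would use that a torsion pair is determined by its torsion class: given $\mathcal{T}$, the companion $\mathcal{F}$ is recovered as $\mathcal{F}=\mathcal{T}^{\perp_{\mathcal{H}}}=\{X\in\mathcal{H}\mid \Hom_{\mathcal{H}}(\mathcal{T},X)=0\}$, and symmetrically $\mathcal{T}={}^{\perp_{\mathcal{H}}}\mathcal{F}$. So the map $(\mathcal{T},\mathcal{F})\mapsto\mathcal{T}$ is injective from the set of torsion pairs in $\mathcal{H}$ into the set of subcategories of $\mathcal{H}$ closed under extensions and direct summands. By Lemma~\ref{finitely many extension closed subcategory in each heart}, the latter set is finite, and hence so is the set of torsion pairs.

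There is essentially no obstacle here; the only point to be careful about is to state clearly that we are counting torsion pairs up to the natural equivalence of subcategories used throughout the paper (full subcategories closed under isomorphisms), and that the injection $(\mathcal{T},\mathcal{F})\mapsto\mathcal{T}$ is well defined modulo this convention. Once that is noted, the conclusion follows immediately from Lemma~\ref{finitely many extension closed subcategory in each heart}.
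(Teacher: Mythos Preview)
Your proposal is correct and follows essentially the same approach as the paper: the paper simply remarks that for any torsion pair $(\mathcal{T},\mathcal{F})$ in a heart both $\mathcal{T}$ and $\mathcal{F}$ are closed under extensions and direct summands, and then invokes Lemma~\ref{finitely many extension closed subcategory in each heart}. Your version is slightly more explicit in noting that the assignment $(\mathcal{T},\mathcal{F})\mapsto\mathcal{T}$ is injective, but the underlying argument is the same.
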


By Lemma \ref{bijectionHeartandSmc}, we identify each SMC $\mathcal{X}$ with the associated heart $\mathcal{H}=\langle\mathcal{X}\rangle$ in $\EG(\textbf{T}_p)$.
Let $\EG_{0}$ be a connected component of the exchange graph $\EG(\mathcal{D})$ of a triangulated category $\mathcal{D}$ and $\Stab_{0}(\mathcal{D})=\mathop{\cup}\limits_{\mathcal{H}\in
\EG_{0}}^{}\rm{U}(\mathcal{H})$, where $\rm{U}(\mathcal{H})$ is the set of stability conditions in $\mathcal{D}$ whose heart is $\mathcal{H}$.

We recall the finiteness condition as in \cite[Assump.\;3.1]{qsta} as follows.

\begin{assump}\label{Assump}
Every heart in $\EG_{0}$ is finite and has only finitely many torsion pairs.
\end{assump}

We end this section by proving the connectedness of   $\Stab(\textbf{T}_p)$.

\begin{prop}\label{connected stab space}
$\Stab(\emph{\textbf{T}}_p)$ is connected with the canonical embedding $\EG(\emph{\textbf{T}}_p)\hookrightarrow
\Stab(\emph{\textbf{T}}_p)$.
\end{prop}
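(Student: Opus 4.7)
The plan is to apply Qiu's framework from \cite{qsta}: once a connected component $\EG_{0}$ of the exchange graph satisfies Assumption \ref{Assump}, the union $\Stab_{0}(\mathcal{D})=\bigcup_{\mathcal{H}\in\EG_{0}}\mathrm{U}(\mathcal{H})$ is automatically a connected component of $\Stab(\mathcal{D})$, with the adjacency of chambers mirroring the edges of $\EG_{0}$. I would therefore take $\EG_{0}=\EG(\textbf{T}_{p})$, which is connected by Theorem \ref{connected of EG(Tp)}, and verify Assumption \ref{Assump} at every vertex. Length-finiteness of each heart is exactly the last clause of Lemma \ref{recollements for hearts of tubes}, and the finiteness of torsion pairs is the content of Lemma \ref{finitely many torsion pairs in each heart}; both hypotheses are thus in place.

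Next I would transfer the connectedness of $\EG(\textbf{T}_{p})$ to the stability space. By the general theory each chamber $\mathrm{U}(\mathcal{H})$ is contractible, hence connected, and the closures of $\mathrm{U}(\mathcal{H})$ and $\mathrm{U}(\mathcal{H}')$ meet along a common wall precisely when $\mathcal{H}'$ is obtained from $\mathcal{H}$ by a simple forward/backward tilt, i.e.\ when $\mathcal{H}$ and $\mathcal{H}'$ are linked by a mutation edge in $\EG(\textbf{T}_{p})$. Thus any finite mutation path in $\EG(\textbf{T}_{p})$ between hearts $\mathcal{H}$ and $\mathcal{H}'$ lifts to a continuous path in $\Stab_{0}(\textbf{T}_{p})$ joining an arbitrary point of $\mathrm{U}(\mathcal{H})$ to an arbitrary point of $\mathrm{U}(\mathcal{H}')$, and connectedness of the graph yields path-connectedness of $\Stab_{0}(\textbf{T}_{p})$.

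It then remains to check $\Stab(\textbf{T}_{p})=\Stab_{0}(\textbf{T}_{p})$. For any $\sigma\in\Stab(\textbf{T}_{p})$ the associated heart $\mathcal{H}_{\sigma}$ (the extension closure of the $\sigma$-semistable objects of phase in $(0,1]$) is the heart of a bounded t-structure on $\mathcal{D}^{b}(\textbf{T}_{p})$; by Lemma \ref{recollements for hearts of tubes} it has finite length, so via Lemma \ref{bijectionHeartandSmc} it corresponds to an SMC and hence is a vertex of $\EG(\textbf{T}_{p})=\EG_{0}$. Therefore $\sigma\in\mathrm{U}(\mathcal{H}_{\sigma})\subset\Stab_{0}(\textbf{T}_{p})$, giving the desired equality and the connectedness of $\Stab(\textbf{T}_{p})$.

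The point I expect to be delicate is strictly the bookkeeping in the second paragraph: one must be sure that every edge of $\EG(\textbf{T}_{p})$, including those coming from the mutations at objects $S^{(p)}_{j}[m]$ of length $p$ that play a special role in Lemma \ref{mutations of SMC}, corresponds to an honest simple tilt of the heart rather than something more exotic. Since the classification of SMCs given by Proposition \ref{heart forms} shows that every SMC in $\mathcal{D}^{b}(\textbf{T}_{p})$ arises from a pre-SMC of type $\mathbb{A}$ glued to a collection of simples of a sub-tube, the mutation at any $X_{i}\in\mathcal{X}$ is a simple tilt of the corresponding heart in the sense of \cite{kysi}, and no further geometric input beyond \cite{qsta} is required.
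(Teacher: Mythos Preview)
Your proposal is correct and follows the same route as the paper: verify Assumption \ref{Assump} via Lemmas \ref{recollements for hearts of tubes} and \ref{finitely many torsion pairs in each heart}, use the connectedness of $\EG(\textbf{T}_p)$ from Theorem \ref{connected of EG(Tp)}, and apply \cite{qsta} (the paper also cites \cite[Thm.~2.12]{wool} for the coverage $\Stab(\textbf{T}_p)=\bigcup_{\mathcal{H}}\mathrm{U}(\mathcal{H})$, which you instead argue directly in your third paragraph). The worry in your final paragraph is unfounded: Lemma \ref{bijectionHeartandSmc} already records that the SMC/heart bijection commutes with mutation, so every edge of $\EG(\textbf{T}_p)$ is a simple tilt without further checking.
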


\begin{proof}
By Theorem \ref{connected of EG(Tp)}, $\EG(\textbf{T}_p)$ is connected with $\EG(\textbf{T}_p)=\EG_{0}(\textbf{T}_p)$.
Moreover, $\EG(\textbf{T}_p)$ satisfies finiteness condition as in Assumption \ref{Assump} by
Lemmas \ref{recollements for hearts of tubes} and \ref{finitely many torsion pairs in each heart}.
Then $\Stab(\textbf{T}_p)$ is connected with $\Stab(\textbf{T}_p)=
\mathop{\cup}\limits_{\mathcal{H}\in
\EG(\textbf{T}_p)}^{}\rm{U}(\mathcal{H})$  by \cite[Thm.\;2.12]{wool} and
\cite[Thm.\;3.2, Thm.\;3.4]{qsta}.
We are done.
\end{proof}

\section{Ext-quivers}

In this section we  introduce the notion of graded gentle one-cycle quiver.
We will describe all the Ext-quivers of  SMCs in $\mathcal{D}^{b} (${\rm{\textbf{T}}}$_p)$ via the graded gentle one-cycle quivers with $p$ vertices.

\subsection{Ext-quivers}
Let $\mathcal{X}$ be a SMC in  triangulated category $\mathcal{D}$.
Following \cite{kingqi} the \emph{Ext-quiver}  $Q_{\mathcal{X}}$ is the (positively) graded quiver defined as follows:
\begin{itemize}
\item [(1)] The vertices of $Q_{\mathcal{X}}$ are indexed by objects of $\mathcal{X}$;
\item [(2)] For $X_{i}, X_{j} \in \mathcal{X}$, there are $\Dim _{\mathbf{k}} \Hom^{k}(X_{i}, X_{j})$ arrows of degree $k\geq1$ from $X_{i}$ to $X_{j}$.
\end{itemize}

Recall that a graded gentle tree $\mathcal{G}$ for a heart of type $\mathbb{A}$  defined in \cite{qext} is a gentle tree with a positive grading for each arrow. The associated quiver $\mathcal{Q} (\mathcal{G})$ of $\mathcal{G}$, is a graded quiver with the same vertex set and an arrow $\textit{a} :$   $i \rightarrow j$ for each unicolored path $\textit{p} :$ $ i \rightarrow j$ in $\mathcal{G}$, with the natural grading of $\textit{p}$.  \par
For a graded gentle tree $\mathcal{G}$, let $V$ be a vertex with neighborhood

\xymatrix@C=3em@R=3.6ex@M=0.1pt@!0{
&&&&\mathcal{R}_{1}\ar@{->}[rd]^{\gamma_{1}}&&\mathcal{B}_{2}&&&&&&  \\
&&&&&V\ar@{->}[rd]^{\gamma_{2}}\ar@{~>}[ru]^{\delta_{2}}&&&   &&&&  \\
&&&&\mathcal{B}_{1}\ar@{~>}[ru]^{\delta_{1}}& &\mathcal{R}_{2}&&&&  \\ }
where $\mathcal{B}_{i}, \mathcal{R}_{i}$ are the subtrees and $\gamma_{i}, \delta_{i}$ are degrees of $\mathcal{G}$, $i=1,2$. The straight line represents one color and the curly line represent the other color.

Recall that the forward (left) mutation $\mu_{V}$ at  vertex $V$(on $\mathcal{G}$) is defined in \cite{qextA}  as follows:
\begin{itemize}
\item [(1)]  if $\delta_{1} \geq 1$, $\mu_{V}$   on the lower part of the quiver is :
\begin{equation*}
\xymatrix@C=3em@R=3.6ex@M=0.1pt@!0{
& &  V\ar@{->}[rd]^{\gamma_{2}} & &\ar@/^/[r]^{\mu_{V}}& &  &  V\ar@{->}[rd]^{\gamma_{2}+1}&  & &&  & &  \\
& \mathcal{B}_{1} \ar@{~>}[ru]^{\delta_{1}}  & &\mathcal{R}_{2} &  & &\mathcal{B}_{1} \ar@{~>}[ru]^{\delta_{1}-1}  & &\mathcal{R}_{2}  & &  \\  }
\end{equation*}
\item [(2)] if $\delta_{1} = 1$, denote
\begin{equation*}
\xymatrix@C=3em@R=3.6ex@M=0.1pt@!0{
&    \mathcal{C}_{1} \ar@{->}[rd]^{\theta_{1}} &  &  &  & &  \\
\mathcal{B}_{1}= & &   \mathcal{W}\ar@{->}[rd]^{\theta_{2}}& &  & &  \\
&  \mathcal{L}\ar@{~>}[ru]^{\beta_{2}} & & \mathcal{C}_{2}   &  &    & &  \\ }
\end{equation*}
and $\mu_{V}$   on the lower part of the quiver is :
\begin{equation} \label{forward mutation of gentle trees}
\xymatrix@C=3em@R=3.6ex@M=0.1pt@!0{
&\mathcal{C}_{1}\ar@{->}[rd]^{\theta_{1}}&&
V\ar@{->}[rd]^{\gamma_{2}}&&\ar@/^/[r]^{\mu_{V}}&&&
V\ar@{->}[rd]^{1}&&\mathcal{C}^{\times}_{2}&&  \\
&&W\ar@{->}[rd]^{\theta_{2}}\ar@{~>}[ru]^{1}&&\mathcal{R}_{2}  &&&\mathcal{L}\ar@{~>}[ru]^{\beta}&&W \ar@{->}[rd]^{\gamma_{2}}\ar@{~>}[ru]^{\theta_{2}}&&&  \\
&\mathcal{L}\ar@{~>}[ru]^{\beta}&&\mathcal{C}_{2}&&&&& \mathcal{C}^{\times}_{1}\ar@{~>}[ru]^{\theta_{1}}&&\mathcal{R}_{2}&&&  \\ }
\end{equation}
where $\mathfrak{X}^{\times}$ is the operation of swapping colors on a graded gentle trees $\mathfrak{X}$.
\item [(3)]  $\mu_{V}$ on the upper part follows the mirror of the lower part.
\end{itemize}

Dually, define the backward mutation $\mu^{-1}_{V}$ to be the reverse of $\mu_{V}$ (which follows a similar rule).

For a SMC $\mathcal{U}$ of type $\mathbb{A}$ containing a projective-injective object, the Ext-quiver of $\mathcal{U}$ has the following form in the sense of \cite{qext}, which we omit the detailed proof here.

\begin{lem} \label{Ext quiver of type A with pro inj}
Let $\mathcal{A}_{l+1}=\langle \tau^{l} S_{a}, \dots,  \tau S_{a} ,S_{a}\rangle\subsetneq\emph{\textbf{T}}_p$ as in (\ref{non-empty of simples}) and  $\mathcal{U}$ be  SMC\,  in $\mathcal{D}^{b} ( \mathcal{A}_{l+1})$ with $S_{a}^{(l+1)}\in\mathcal{U}$. Then the graded gentle tree $\mathcal{G}_{\mathcal{U}}$ of $\mathcal{U}$ has the form with subtrees $\mathcal{B}_{}, \mathcal{R}_{}$ as below
\begin{equation} \label{graded A form}
\xymatrix@C=4.1em@R=5.2ex@M=0.2pt@!0{
\mathcal{B}_{}\ar@{~>}[r]^{\delta_{}}& S_{a}^{(l+1)}\ar@{->}[r]^{\gamma_{}}&\mathcal{R}_{}.    \\ }
\end{equation}
Moreover, if $S_{a-l+t_{1}}^{(t_{1}+1)}[k_{1}], S_{a}^{(t_{2})}[k_{2}] \in\mathcal{U}$ such that $k_{2}<k<k_{1}$ for any other $S_{a-l+t_{}}^{(t_{}+1)}[k_{}], S_{a}^{(t_{})}[k_{}]\in\mathcal{U}$, then $\mathcal{G}_\mathcal{U}$ has the form with subtrees $\mathcal{B}_{}, \mathcal{R}_{}, \mathcal{B}_{i}, \mathcal{R}_{i}$ as below
\begin{equation} \label{more graded A form}
\xymatrix@C=6.7em@R=5.2ex@M=0.5pt@!0{
&\mathcal{R}_{1}\ar@{->}[d]^-{\gamma_{1}}&&
\mathcal{B}_{1}\ar@{~>}[d]^-{\delta_{1}}\\
\mathcal{B}_{}\ar@{~>}[r]^-{\delta_{}}&S_{a-l+t_{1}}^{(t_{1}+1)}[k_{1}]
\ar@{->}[d]^-{\gamma_{2}}\ar@{~>}[r]^-{k_{1}}&S_{a}^{(l+1)}\ar@{->}[r]^-{-k_{2}}&
S_{a}^{(t_{2})}[k_{2}]\ar@{~>}[d]^-{\delta_{2}}
\ar@{->}[r]^-{\gamma_{}}&\mathcal{R}_{}.\\
&\mathcal{R}_{2}&&\mathcal{B}_{2}\\ }
\end{equation}
\end{lem}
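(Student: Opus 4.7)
The approach is to invoke Qiu's classification \cite{qext} to obtain the graded gentle tree $\mathcal{G}_{\mathcal{U}}$ of $\mathcal{U}$, and then analyze its local structure at $S_a^{(l+1)}$ using the fact that this vertex is projective-injective. Since $\mathcal{A}_{l+1}\simeq\Mod\,\mathbb{A}_{l+1}$ is of Dynkin type $\mathbb{A}$, \cite{qext} supplies a graded gentle tree whose associated quiver is the Ext-quiver of $\mathcal{U}$; the remaining work is thus purely local at the vertex $S_a^{(l+1)}$.

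The first step is to pin down the Hom groups at $S_a^{(l+1)}$. Because $S_a^{(l+1)}$ is the unique indecomposable projective-injective of the linear category $\mathcal{A}_{l+1}$, one has $\Ext_{\mathcal{A}_{l+1}}^{i}(S_a^{(l+1)},-)=0=\Ext_{\mathcal{A}_{l+1}}^{i}(-,S_a^{(l+1)})$ for all $i\ge 1$. Combining this vanishing with Lemma \ref{non-zero morphism between objects} and the SMC vanishing axiom, I would show that for any $Y=S_j^{(t)}[k]\in\mathcal{U}\setminus\{S_a^{(l+1)}\}$, the condition $\Hom^{d}(S_a^{(l+1)},Y)\ne 0$ for some $d\ge 1$ forces $j=a$, $k=-d\le -1$, with the space then one-dimensional; dually, $\Hom^{d}(Y,S_a^{(l+1)})\ne 0$ forces $\soc(S_j^{(t)})=S_{a-l}$, so $Y=S_{a-l+t-1}^{(t)}[d]$ with $d\ge 1$, again one-dimensional. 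These two disjoint families, distinguished by $\top=S_a$ and $\soc=S_{a-l}$ respectively, account for all Ext-quiver arrows incident to $S_a^{(l+1)}$.

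The next step is to translate this into the gentle-tree structure. Each vertex of a graded gentle tree admits at most one incoming and one outgoing arrow of each color, and the Ext-quiver arrows correspond to unicolored paths in $\mathcal{G}_{\mathcal{U}}$. Since the two families above are disjoint, they must occupy opposite sides of $S_a^{(l+1)}$; an induction on the mutation distance from the standard SMC $\{\tau^{l}S_a,\dots,\tau S_a,S_a\}$, using Qiu's rule (\ref{forward mutation of gentle trees}), shows that throughout the mutation orbit exactly one color is outgoing and the other is incoming at $S_a^{(l+1)}$, with the remaining two slots empty. This yields the form (\ref{graded A form}), where $\mathcal{R}$ and $\mathcal{B}$ collect the outgoing (top-$S_a$) and incoming (socle-$S_{a-l}$) families respectively.

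For the more detailed form (\ref{more graded A form}), the hypothesis on $k_1,k_2$ singles out $X_1=S_{a-l+t_1}^{(t_1+1)}[k_1]$ and $X_2=S_a^{(t_2)}[k_2]$ as the distinguished immediate neighbors of $S_a^{(l+1)}$ in $\mathcal{G}_{\mathcal{U}}$ along the curly and straight edges respectively, fixing the labels $k_1$ and $-k_2$ on those two edges. Repeating the same projective-injective-style Hom analysis at $X_1$ and $X_2$—which themselves have distinguished socle $S_{a-l}$ and top $S_a$—determines the remaining four slots at each of those vertices and produces the subtrees $\mathcal{B}_1,\mathcal{B}_2,\mathcal{R}_1,\mathcal{R}_2$ together with the edge degrees $\delta_1,\delta_2,\gamma_1,\gamma_2$. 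The principal obstacle is invoking \cite{qext} with the correct dictionary between graded gentle-tree paths and Ext-quiver arrows; once this dictionary is granted, the lemma reduces to the Hom computations exploiting the projective-injective position of $S_a^{(l+1)}$ in $\mathcal{A}_{l+1}$.
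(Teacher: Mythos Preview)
Your proposal is correct and in fact supplies considerably more detail than the paper itself, which simply states that the lemma holds ``in the sense of \cite{qext}'' and omits the proof entirely. Your strategy---invoke Qiu's classification to obtain a graded gentle tree, then use the projective-injective property of $S_a^{(l+1)}$ together with Lemma~\ref{non-zero morphism between objects} to pin down the local structure---is exactly the argument the paper's citation implicitly gestures toward, so there is no substantive difference in approach.
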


Moreover,  using Lemma \ref{Ext quiver of type A with pro inj} the Ext-quiver of a SMC $\mathcal{U}$ consisting of two parts of type $\mathbb{A}$ is described by the following result.
We omit this proof here.

\begin{lem} \label{Ext quiver of two type A with pro inj}
Let $\mathcal{A}_{l_{i}+1}=\langle \tau^{l_{i}} S_{a_{i}}, \dots,  \tau S_{a_{i}} ,S_{a_{i}}\rangle\subsetneq\emph{\textbf{T}}_p$ as in (\ref{non-empty of simples}) and  $\mathcal{U}_{i}$ be  SMC\,  in $\mathcal{D}^{b}(\mathcal{A}_{l_{i}+1})$ with $S_{a_{1}+1}=\tau^{l_{2}} S_{a_{2}}$, $S_{a_{i}}^{(l_{i}+1)}\in\mathcal{U}_{i}$ and $i=1,\,2$. Then the graded gentle tree $\mathcal{G}_{\mathcal{U}}$ of $\mathcal{U}:=\mathcal{U}_{1}\cup\mathcal{U}_{2}$ in $\mathcal{D}^{b}(\mathcal{A}_{l_{1}+l_{2}+2})$ has one of the form
with subtrees $\mathcal{B}_{i}, \mathcal{R}_{i}$ as below
\begin{equation} \label{more Ext quiver of two type A}
\xymatrix@C=4.5em@R=5.2ex@M=0.5pt@!0{
\mathcal{B}_{1}\ar@{~>}[d]_-{\delta_{1}}&\mathcal{R}_{2}&&
\mathcal{B}_{1}\ar@{~>}[d]_-{\delta_{1}}&\mathcal{R}_{2}  \\
S_{a_{1}}^{(l_{1}+1)}\ar@{->}[d]_-{\gamma_{1}}&
S_{a_{2}}^{(l_{2}+1)}\ar@{~>}[u]_-{\gamma_{2}}\ar@{->}[l]^{1}&&
S_{a_{1}}^{(l_{1}+1)}\ar@{->}[d]_-{\gamma_{1}}\ar@{~>}[r]^{1}&
S_{a_{2}}^{(l_{2}+1)}\ar@{~>}[u]_-{\gamma_{2}}\ar@<1.2ex>[l]^-{1}  \\
\mathcal{R}_{1}&\mathcal{B}_{2},\ar@{->}[u]_-{\delta_{2}}&&
\mathcal{R}_{1}&\mathcal{B}_{2}.\ar@{->}[u]_-{\delta_{2}}  \\
&S_{a_{2}+1}\neq\tau^{l_{1}} S_{a_{1}}\quad\quad\quad&&\quad\quad\quad\quad\quad    S_{a_{2}+1}=\tau^{l_{1}} S_{a_{1}}&&&  \\ }
\end{equation}
\end{lem}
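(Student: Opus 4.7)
The plan is to build the gentle tree $\mathcal{G}_{\mathcal{U}}$ by first invoking Lemma~\ref{Ext quiver of type A with pro inj} for each component $\mathcal{U}_i$ separately, and then attaching the two resulting trees along the arrows coming from graded Hom-spaces between the distinguished projective-injective vertices $S_{a_1}^{(l_1+1)}$ and $S_{a_2}^{(l_2+1)}$. Applying Lemma~\ref{Ext quiver of type A with pro inj} to each $\mathcal{U}_i \subset \mathcal{D}^b(\mathcal{A}_{l_i+1})$ produces subtrees $\mathcal{B}_i, \mathcal{R}_i$ attached to $S_{a_i}^{(l_i+1)}$ by arrows of degrees $\delta_i, \gamma_i$ in two distinct colors; these are the two ``wings'' appearing on either side of each picture in \eqref{more Ext quiver of two type A}.

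Next I would show that all cross-Hom-spaces $\Hom^{\bullet}(X, Y)$ for $X \in \mathcal{U}_1 \setminus \{S_{a_1}^{(l_1+1)}\}$ and $Y \in \mathcal{U}_2 \setminus \{S_{a_2}^{(l_2+1)}\}$ (and vice versa) vanish. Via the assignment $F^{-1}$ from Section~3, this reduces to a Hom computation between indecomposables of $\textbf{T}_p$ whose composition-factor sets are supported strictly in $\{S_{a_1-l_1}, \dots, S_{a_1}\}$ and, using the hypothesis $S_{a_1+1} = \tau^{l_2} S_{a_2}$, strictly in $\{S_{a_1+1}, \dots, S_{a_2}\}$. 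Since these two windows are disjoint, an application of Lemma~\ref{non-zero morphism between objects} together with the Serre-duality identity $\Hom^1(X, Y) \cong \D\Hom(Y, \tau X)$ rules out all potentially non-zero cross-morphisms, so no new arrows link interior vertices of $\mathcal{G}_{\mathcal{U}_1}$ to those of $\mathcal{G}_{\mathcal{U}_2}$.

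The crux is therefore the computation of $\Hom^k(S_{a_i}^{(l_i+1)}, S_{a_j}^{(l_j+1)})$ for $i \ne j$ and $k \in \{0, 1\}$; degrees $\geq 2$ vanish by hereditariness. Lemma~\ref{non-zero morphism between objects} forces both degree-zero spaces to vanish. For degree one, Serre duality gives $\Hom^1(S_{a_2}^{(l_2+1)}, S_{a_1}^{(l_1+1)}) \cong \D\Hom(S_{a_1}^{(l_1+1)}, S_{a_2-1}^{(l_2+1)})$, where $\top(S_{a_1}^{(l_1+1)}) = S_{a_1} = \soc(S_{a_2-1}^{(l_2+1)})$ belongs to both composition-factor sets, yielding a one-dimensional space and hence the degree-$1$ arrow common to both pictures. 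The symmetric computation $\Hom^1(S_{a_1}^{(l_1+1)}, S_{a_2}^{(l_2+1)}) \cong \D\Hom(S_{a_2}^{(l_2+1)}, S_{a_1-1}^{(l_1+1)})$ is non-zero exactly when $S_{a_2} = \top(S_{a_2}^{(l_2+1)})$ lies in the composition factors $\{S_{a_1-1-l_1}, \dots, S_{a_1-1}\}$ of $S_{a_1-1}^{(l_1+1)}$, which via Lemma~\ref{non-zero morphism between objects} is equivalent to $a_1 - l_1 \equiv a_2 + 1 \pmod p$, i.e.\ to the wrap-around condition $S_{a_2+1} = \tau^{l_1} S_{a_1}$. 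This accounts for the additional reverse arrow appearing only in the right-hand picture of \eqref{more Ext quiver of two type A}.

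The main obstacle I anticipate is the bookkeeping of the two colors for the new degree-$1$ arrows. The gentle property demands, at each vertex $S_{a_i}^{(l_i+1)}$, that every incoming bridge arrow carry the color opposite to that of the already-present $\gamma_i$ or $\delta_i$ with which it would otherwise form a monochromatic composable pair. I would settle this by lifting the relevant degree-$1$ extensions to genuine short exact sequences in $\textbf{T}_p$ via Lemma~\ref{exact seqs in tube}(3)--(4), and inspecting which compositions of such extensions are non-zero in $\Ext^{\bullet}$; this pins down the bichromatic pattern and forces precisely the orientation and coloring displayed in \eqref{more Ext quiver of two type A}.
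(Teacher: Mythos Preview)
The paper omits the proof of this lemma, noting only that it follows ``using Lemma~\ref{Ext quiver of type A with pro inj}''. Your approach---applying that lemma to each $\mathcal{U}_i$ separately and then gluing along the $\Hom^1$-spaces between the two projective--injective vertices---is precisely the route the paper points to, and your Serre-duality computation in the third paragraph correctly isolates the wrap-around condition $S_{a_2+1}=\tau^{l_1}S_{a_1}$ governing the right-hand picture.

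Two minor remarks. First, the appeal to $F^{-1}$ in your second paragraph is superfluous: the objects of each $\mathcal{U}_i$ are already shifts of indecomposables of $\textbf{T}_p$ with composition factors inside the respective windows $\{S_{a_1-l_1},\dots,S_{a_1}\}$ and $\{S_{a_1+1},\dots,S_{a_2}\}$, so Lemma~\ref{non-zero morphism between objects} and Serre duality apply directly without any translation. Second, you explicitly verify vanishing of cross-Homs only between \emph{non}-projective--injective elements; the spaces $\Hom^\bullet(X,S_{a_2}^{(l_2+1)})$ for $X\in\mathcal{U}_1\setminus\{S_{a_1}^{(l_1+1)}\}$ (and the symmetric ones) need not vanish, but in the gentle-tree formalism they are supposed to be encoded by unicoloured paths through the bridge arrow. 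Checking that the colour assignment makes exactly these compositions survive, and no others, is indeed---as you anticipate---the main bookkeeping step, and your plan to lift the relevant degree-$1$ extensions via Lemma~\ref{exact seqs in tube} and track which Yoneda products are non-zero is the right way to pin this down.
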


We now introduce graded gentle one-cycle quiver of rank $r$ as follows.

\begin{defn}   \label{generalised graded gentle trees of rank n}
A \emph{graded gentle one-cycle quiver $\widetilde{\mathcal{G}}$ of rank} $rk(\widetilde{\mathcal{G}})=r$ is a graded quiver of the following forms such that
\begin{equation} \label{graph of graded gentle one cycle trees}
\xymatrix@C=3.52em@R=5.6ex@M=0.02pt@!0{
&&&&&\mathcal{B}_{2}\ar@{->}[d]^{\delta_{2}}&\cdots&
\mathcal{R}_{r-1}&&&&&&\\
&&&&&X_{2}\ar@{~>}[d]^{\gamma_{2}}\ar@{->}[ldd]_{1}& \cdots\ar@{~>}[l]_{1}& X_{r-1}\ar@{->}[u]^{\gamma_{r-1}}\ar@{~>}[l]_{1}&&&&&&&&&&  \\
&&&&\mathcal{B}_{1}\ar@{-->}[d]_{\delta_{1}}&\mathcal{R}_{2}&\cdots& \mathcal{B}_{r-1}\ar@{~>}[u]^{\delta_{r-1}}&\mathcal{R}_{r}&&&& \\
&&&&X_{1}\ar@{->}[d]_{\gamma_{1}}\ar@{-->}[rrrr]_-{1}&&&&
X_{r}\ar@{-->}[u]_{\gamma_{r}}\ar@{->}[luu]_{1}&&&&&&&&&& \\
&&&&\mathcal{R}_{1}&&&&
\mathcal{B}_{r}\ar@{->}[u]_{\delta_{r}}&&&&&& \\ }
\end{equation}
\begin{itemize}
\item [(1)] \emph{One graded cycle:} $\widetilde{\mathcal{G}}$ contains exactly one graded cycle $C_{r}$ with all degrees equal one formed by vertices $X_{1}, X_{2}, \cdots$, $X_{r}$  with $r$ alternating colors of  the straight,  curly and dotted line types;

\item [(2)] \emph{Alternating graded gentle trees:} each
\xymatrix@C=2.4em@R=2ex@M=1pt@!0{
\mathcal{G}_{i}:=&\mathcal{B }_{i}\ar@{~>}[r]^-{\delta_{i}}&   X_{i}\ar@{->}[r]^-{\gamma_{i}}&\mathcal{R}_{i}\\ }
is a graded gentle tree with the color of incoming  arrow of $X_{i}$ in $\mathcal{G}_{i}$ is the same to each of outgoing  arrows of $X_{i-1}$ in $\mathcal{G}_{i-1}$ and $X_{i}$ in $C_{r}$, where the index is considered module $r$;
particularly, when $rk(\widetilde{\mathcal{G}})=1$,
\xymatrix@C=2.4em@R=2ex@M=1pt@!0{
\mathcal{G}_{1}:=&\mathcal{B }_{1}\ar@{->}[r]^-{\delta_{1}}&   X_{1}\ar@{->}[r]^-{\gamma_{1}}&\mathcal{R}_{1}\\ }
is a graded gentle tree with colors of the straight line and dotted line, and
\begin{equation*}
\xymatrix@C=3.8em@R=1.8ex@M=1pt@!0{
\widetilde{\mathcal{G}}= \\ }
\xymatrix@C=3.8em@R=1.8ex@M=1pt@!0{
\mathcal{B}_{1}\ar@/_1.6pc/@{{}{x}{}}[rr]
\ar@{->}[r]^-{\delta_{1}}&X_{1}^{\;\ar@(l, u)^-{1}\;\;\;  }\ar@{->}[r]^-{\gamma_{1}}&\mathcal{R}_{1}\\ }
\end{equation*}
such that
\xymatrix@C=3.2em@R=1.3ex@M=0.2pt@!0{
X_{1}^{\;\;\ar@(l, u)^-{2}\;\;\;  }\\ }
is vanishing,
and the composition from $\mathcal{B}_{1}$ to $\mathcal{R}_{1}$ going through \xymatrix@C=3.2em@R=1.3ex@M=0.2pt@!0{
X_{1}^{\;\;\ar@(l, u)^-{1}\;\;\;  }\\ } is valid with degree $\delta_{1}+\gamma_{1}+1$, while the one in
\xymatrix@C=2.4em@R=2.5ex@M=1pt@!0{
\mathcal{B}_{1}\ar@{->}[r]^-{\delta_{1}}&
X_{1}\ar@{->}[r]^-{\gamma_{1}}&\mathcal{R}_{1}\\ } is vanishing in $\widetilde{\mathcal{G}}$.
\end{itemize}
\end{defn}

We now describe the  Ext-quiver of each SMC $\mathcal{X}$ via the graded gentle one-cycle quiver.
Recall that up to  shift actions of $\mathcal{D}^{b} (${\rm{\textbf{T}}}$_p)$, we only  consider $\mathcal{X}=\mathop{\cup}\limits_{i=1}^{n}\mathcal{X}_{i}$ as in (\ref{collection of X}).

\begin{prop}  \label{Ext quiver of rank more than  two}
Let
$\mathcal{S}$ as in (\ref{non-empty of simples}) with $1<\lvert\mathcal{S}\rvert<p-1$ and $\mathcal{X}=\mathop{\cup}\limits_{i=1}^{n}\mathcal{X}_{i}$ as in (\ref{collection of X}).
Then the Ext-quiver of $\mathcal{X}$ in $\mathcal{D}^{b} (${\rm{\textbf{T}}}$_p)$ is  the associated quiver of graded gentle one-cycle quiver $\widetilde{\mathcal{G}}_{}$ with rank $r=p-\lvert \mathcal{S}\rvert$ as in (\ref{graph of graded gentle one cycle trees}).
\end{prop}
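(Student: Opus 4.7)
The plan is to decompose the Ext-quiver $Q_{\mathcal{X}}$ into a ``cyclic skeleton'' given by the tube subcollection $\mathcal{X}_{\tube}$ and ``tree branches'' coming from the pre-SMCs $F(\mathcal{U}_{i})$ of type $\mathbb{A}$, and then verify that the way they glue together satisfies the axioms of a graded gentle one-cycle quiver as in Definition~\ref{generalised graded gentle trees of rank n}. The crucial inputs are Proposition~\ref{collection of simples of sub-tube} (which identifies the cycle vertices), Lemmas~\ref{Ext quiver of type A with pro inj} and \ref{Ext quiver of two type A with pro inj} (which describe the trees attached to each ``node'' $S^{(l_{i}+1)}_{a_{i}}$), and the vanishing/isomorphism statements of Lemmas~\ref{vanising homs} and \ref{isom homs}.

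First I would set up the cycle. By Proposition~\ref{collection of simples of sub-tube}, the subset $\mathcal{X}_{\tube}$ consists of $r=p-\lvert\mathcal{S}\rvert$ objects which form a complete collection of simples of a tube subcategory $\textbf{T}_{r}\subseteq\mathcal{D}^{b}(\textbf{T}_{p})$. For such a collection $T_{1},\dots,T_{r}$ in a tube of rank $r$, we have $\Hom^{1}(T_{j+1},T_{j})\cong\textbf{k}$ and all other graded Homs vanish, so the induced subquiver on $\mathcal{X}_{\tube}$ is precisely a cyclic quiver $C_{r}$ with all arrows of degree $1$, matching the inner cycle of (\ref{graph of graded gentle one cycle trees}). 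I would label the cycle vertices so that $X_{i}$ corresponds to $S^{(l_{i}+1)}_{a_{i}}$, with the intermediate simples $S_{b}$ ($a_{i-1}<b<a_{i}-l_{i}$) inserted as cycle vertices with trivial associated trees $\mathcal{B}_{b}=\mathcal{R}_{b}=\emptyset$.

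Next, the tree $\mathcal{G}_{i}$ grafted at $X_{i}=S^{(l_{i}+1)}_{a_{i}}$ should be recovered from the pre-SMC $\mathcal{U}_{i}$ through the bijection $F$. Because $\mathcal{U}_{i}$ is a SMC in $\mathcal{D}^{b}(\mathcal{A}_{l_{i}})$ and $S^{(l_{i}+1)}_{a_{i}}$ is the (unique) projective-injective of $\mathcal{A}_{l_{i}+1}$, the augmented collection $\mathcal{U}_{i}\cup\{S^{(l_{i}+1)}_{a_{i}}\}$ is a SMC in $\mathcal{D}^{b}(\mathcal{A}_{l_{i}+1})$, and by Lemma~\ref{Ext quiver of type A with pro inj} its Ext-quiver is a graded gentle tree of the shape (\ref{graded A form}) or (\ref{more graded A form}) with $X_{i}$ as distinguished node. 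The isomorphism (\ref{isom homseq}) of Lemma~\ref{isom homs} guarantees that $F$ preserves all graded Homs internal to the $\mathbb{A}$-piece, so the tree computed inside $\mathcal{D}^{b}(\mathcal{A}_{l_{i}+1})$ agrees with the subquiver induced on $F(\mathcal{U}_{i})\cup\{X_{i}\}$ inside $\mathcal{D}^{b}(\textbf{T}_{p})$. Lemma~\ref{vanising homs} then rules out cross-arrows between different branches: parts (1)--(3) kill Homs between a cycle vertex $X_{i}$ (or intermediate simple $S_{b}$) and any tree object attached at $X_{j}$ with $j\neq i$, and part (4) handles mixed-degree situations. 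Thus the trees $\mathcal{G}_{i}$ only meet each other through the cycle $C_{r}$.

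The main obstacle will be verifying the alternating-color/gentle condition at the nodes $X_{i}$, since there the cycle carries two distinguished arrows (to $X_{i-1}$ and from $X_{i+1}$) while the tree $\mathcal{G}_{i}$ contributes two further arrows $\mathcal{B}_{i}\leadsto X_{i}\to\mathcal{R}_{i}$. Here I would use Lemma~\ref{Ext quiver of two type A with pro inj} to handle the two sub-cases distinguished by whether $S_{a_{i-1}+1}$ equals $\tau^{l_{i}} S_{a_{i}}$ (i.e.\ whether the two $\mathbb{A}$-blocks are adjacent in the tube or separated by at least one simple $S_{b}$): in the separated case the cycle arrow into $X_{i}$ originates from an intermediate $S_{b}$ and naturally carries a third ``color'', yielding the picture in the left diagram of (\ref{more Ext quiver of two type A}); in the adjacent case, the extra degree-$1$ arrow predicted by the tube structure appears exactly as in the right diagram, with color alternation forcing certain length-two compositions through $X_{i}$ to vanish (matching the gentle relations in Definition~\ref{generalised graded gentle trees of rank n}(2)). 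Once this local compatibility is checked at every $X_{i}$, the assembled quiver is by construction the associated quiver of a graded gentle one-cycle quiver $\widetilde{\mathcal{G}}$ of rank $r$, completing the proof.
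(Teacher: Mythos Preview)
Your overall architecture matches the paper's proof closely: identify the cycle $C_{r}$ via $\mathcal{X}_{\tube}$ (Proposition~\ref{collection of simples of sub-tube}), attach at each cycle vertex $X_{i}=S^{(l_{i}+1)}_{a_{i}}$ the graded gentle tree coming from the $\mathbb{A}$-type piece (Lemmas~\ref{Ext quiver of type A with pro inj} and \ref{Ext quiver of two type A with pro inj}), and kill all cross-branch arrows using the vanishing arguments of Lemma~\ref{vanising homs}. The paper organizes this via Corollary~\ref{decomposition of SMC of rank geq 2}, writing $\mathcal{X}=\bigcup_{m=1}^{r}\mathcal{A}_{m}$ with each $\mathcal{A}_{m}$ a SMC of type $\mathbb{A}$ containing one cycle vertex, then applying Lemma~\ref{Ext quiver of two type A with pro inj} to consecutive pairs $\mathcal{A}_{m}\cup\mathcal{A}_{m+1}$ and checking that $\Hom^{\bullet}$ vanishes between $\mathcal{A}_{m}$ and $\mathcal{A}_{m+2}$.

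There is, however, a genuine slip in your second paragraph. The claim that the augmented collection $\mathcal{U}_{i}\cup\{S^{(l_{i}+1)}_{a_{i}}\}$ is a SMC in $\mathcal{D}^{b}(\mathcal{A}_{l_{i}+1})$ is false in general: if $S^{(t)}_{a_{i}}[k]\in\mathcal{U}_{i}$ with $k>0$, then $\Hom\bigl(S^{(l_{i}+1)}_{a_{i}},\,S^{(t)}_{a_{i}}[k][-k]\bigr)=\Hom^{0}\bigl(S^{(l_{i}+1)}_{a_{i}},\,S^{(t)}_{a_{i}}\bigr)\cong\textbf{k}$, violating the SMC condition. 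The correct object to feed into Lemma~\ref{Ext quiver of type A with pro inj} is $\{S^{(l_{i}+1)}_{a_{i}}\}\cup F(\mathcal{U}_{i})$, which is exactly the $\mathcal{A}_{i}$ of Corollary~\ref{decomposition of SMC of rank geq 2}; that collection \emph{is} a SMC of type $\mathbb{A}$, and then there is nothing to transfer via $F$ afterwards. Once you make this adjustment (i.e.\ invoke Corollary~\ref{decomposition of SMC of rank geq 2} rather than the detour through $\mathcal{U}_{i}$), your argument coincides with the paper's.
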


\begin{proof}
By Proposition \ref{collection of simples of sub-tube}, we write $\mathcal{X}_{\tube}=\{S^{(t_{1})}_{j_{1}}, \cdots, S^{(t_{r})}_{j_{r}}\}=\mathop{\cup}\limits_{i=1 }^{n}\{S^{(l_{i}+1)}_{a_{i}},  S_{b}\mid a_{i-1}<b<a_{i}-l_{i}\}\subset\mathcal{X}$ for the collection of simple objects in a tube subcategory $\textbf{T}_r\subseteq\textbf{T}_p$ with
$$\Hom^{1}(S^{(t_{1})}_{j_{1}}, S^{(t_{r})}_{j_{r}})\cong\Hom^{1}(S^{(t_{m+1})}_{j_{m+1}}, S^{(t_{m})}_{j_{m}})\cong\textbf{k}\, (1\leq m\leq r-1).$$
By Corollary \ref{decomposition of SMC of rank geq 2},
$\mathcal{X}=\mathop{\cup}\limits_{i=1 }^{n}\mathcal{X}_{i}=\mathop{\cup}\limits_{i=1 }^{n}(\mathcal{A}_{i}\cup(\mathop{\cup}\limits_{b=a_{i-1}+1 }^{a_{i}-l_{i}-1}\mathcal{A}_{b}))$ with each $\mathcal{A}_{i}$ and $\mathcal{A}_{b}$ is a SMC of type $\mathbb{A}$.
We rewrite the index of $\mathcal{A}_{i}$ and $\mathcal{A}_{b}$ such that $\mathcal{X}=\mathop{\cup}\limits_{m=1}^{r}\mathcal{A}_{m}$ with $S^{(t_{m})}_{j_{m}}\in\mathcal{A}_{m}$ when $S^{(t_{m})}_{j_{m}}\in\mathcal{X}_{\tube}$.
Then we get the graded gentle tree of $\mathcal{G}_{\mathcal{U}_{m}}$  with $\mathcal{U}_{m}:=\mathcal{A}_{m}\cup\mathcal{A}_{m+1}$ is as in (\ref{more Ext quiver of two type A}) and the Ext-quiver of $\mathcal{U}_{m}$ is the associated quiver of $\mathcal{G}_{\mathcal{U}_{m}}$ by Lemma \ref{Ext quiver of two type A with pro inj}.
Similarly as in Lemma  \ref{vanising homs}, we have $\Hom^{\bullet}(S^{(t_{})}_{j_{}}[k_{}], S^{(t')}_{j'}[k'])=0=\Hom^{\bullet}(S^{(t')}_{j'}[k'], S^{(t_{})}_{j_{}}[k_{}])$ with $S^{(t_{})}_{j_{}}[k_{}]\in \mathcal{D}^{b}(\mathcal{A}_{m}), S^{(t')}_{j'}[k']\in \mathcal{D}^{b}(\mathcal{A}_{m+2})$.
Then there are no  paths of positive degrees between $\mathcal{A}_{m}$ and $\mathcal{A}_{m+2}$ with $r\geq3$.
By combining the above arguments, we are done.
\end{proof}

Using Lemmas \ref{isom homs} and \ref{Ext quiver of type A with pro inj}, we have the following lemma, cf. \cite[Thm. 2.11]{qext}.

\begin{lem} \label{Ext quiver of F(U)}
Let $\mathcal{A}_{p-1}=\langle S_{1}, S_{2}, \dots, S_{p-1}\rangle\subsetneq\emph{\textbf{T}}_p$, $\mathcal{U}$ be  SMC\,  in $\mathcal{D}^{b}(\mathcal{A}_{p-1})$ with $S_{p-1}^{(t_{1})}[k_{1}], S_{p-1}^{(t_{2})}[k_{2}]\in\mathcal{U}$, $k_{2}<0<k_{1}$ such that $k_{2}<k<k_{1}$ for any other $S_{p-1}^{(t_{})}[k_{}]\in\mathcal{U}$, and $F(\mathcal{U})$ as in (\ref{F(U_i)and X_i}).
Then $F(\mathcal{U})$ is a SMC in $\mathcal{D}^{b}(\mathcal{A}_{p-1})$ and $\mathcal{G}_{F(\mathcal{U})}$ has the form induced from $\mathcal{G}_{\mathcal{U}}$  with subtrees $\mathcal{B}_{}, \mathcal{R}_{}, \mathcal{B}_{i}, \mathcal{R}_{i}$ as below
\begin{equation} \label{more graded F(U) form}
\xymatrix@C=7.95em@R=5.4ex@M=0.2pt@!0{
&\mathcal{R}_{1}\ar@{~>}[d]^-{\gamma_{1}}&
\mathcal{B}_{1}\ar@{~>}[d]^-{\delta_{1}}\\
\mathcal{B}_{}\ar@{->}[r]^-{\delta_{}}&F(S_{p-1}^{(t_{1})})[k_{1}]
\ar@{~>}[d]^-{\gamma_{2}}\ar@{->}[r]^-{k_{1}-k_{2}+2}&
S_{p-1}^{(t_{2})}[k_{2}]\ar@{~>}[d]^-{\delta_{2}}
\ar@{->}[r]^-{\gamma_{}}&\mathcal{R}_{}.\\
&\mathcal{R}_{2}&\mathcal{B}_{2}\\ }
\end{equation}
\end{lem}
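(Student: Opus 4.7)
The plan is to verify the two assertions separately: first that $F(\mathcal{U})$ is simple-minded, and second that its graded gentle tree has the claimed shape (\ref{more graded F(U) form}). Partition $\mathcal{U} = \mathcal{U}^{+} \dot\cup \mathcal{U}^{-}$ according to whether the shift is $\geq 0$ or $<0$, so that by construction $F(\mathcal{U}) = F(\mathcal{U}^{+}) \dot\cup \mathcal{U}^{-}$. Pairs inside $\mathcal{U}^{-}$ are untouched, so they inherit the SMC axioms from $\mathcal{U}$. Pairs inside $F(\mathcal{U}^{+})$ inherit them through the Hom-isomorphism (\ref{isom homseq}) of Lemma \ref{isom homs}. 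Generation $\thick(F(\mathcal{U})) = \thick(\mathcal{U})$ is immediate from the triangle $S^{(p)}_{p-1} \to S^{(t)}_{p-1} \to F(S^{(t)}_{p-1}) \to S^{(p)}_{p-1}[1]$ specializing (\ref{exchange triangle}).

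The non-trivial SMC check is for the mixed pairs with one factor in $F(\mathcal{U}^{+})$ and the other in $\mathcal{U}^{-}$. For these, the plan is to apply $\Hom(-,Y)$ and $\Hom(Y,-)$ to the triangle above, and to read off the required vanishing from the SMC axioms of $\mathcal{U}$ combined with Lemma \ref{vanising homs}. The only pair whose Hom-structure genuinely changes is the distinguished pair $(F(S^{(t_1)}_{p-1})[k_1],\, S^{(t_2)}_{p-1}[k_2])$; the minimality hypothesis (no other $S^{(t)}_{p-1}[k]$ with $k_2<k<k_1$) guarantees that this is indeed the only place where a new arrow can appear.

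To compute the degree of that new arrow, expand $F(S^{(t_1)}_{p-1})[k_1] = S^{(p-t_1)}_{p-1-t_1}[k_1+1]$ and rewrite
\[
\Hom^{d}(F(S^{(t_1)}_{p-1})[k_1],\, S^{(t_2)}_{p-1}[k_2]) \;\cong\; \Ext^{d+k_2-k_1-1}_{\textbf{T}_p}(S^{(p-t_1)}_{p-1-t_1},\, S^{(t_2)}_{p-1}).
\]
Since $\textbf{T}_p$ is hereditary, the right-hand side is concentrated in degrees $0$ and $1$. The SMC axioms on $\mathcal{U}$ force $t_1 > t_2$ (otherwise $\Hom^{-(k_1-k_2)}(S^{(t_2)}_{p-1}[k_2], S^{(t_1)}_{p-1}[k_1])$ would be non-zero), and Lemma \ref{non-zero morphism between objects} then shows $\Hom(S^{(p-t_1)}_{p-1-t_1}, S^{(t_2)}_{p-1}) = 0$: the top $S_{p-1-t_1}$ does not lie in the composition-factor set of $S^{(t_2)}_{p-1}$. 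The AR-formula $\Ext^1 \cong \D\Hom(-,\tau-)$ together with one further application of Lemma \ref{non-zero morphism between objects} to $\Hom(S^{(t_2)}_{p-1}, S^{(p-t_1)}_{p-2-t_1})$ then confirms that the Ext$^{1}$ term is one-dimensional. Hence the unique positive-degree arrow in $Q_{F(\mathcal{U})}$ between these two vertices sits in degree $d = k_1-k_2+2$, as the diagram (\ref{more graded F(U) form}) demands.

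The remaining bookkeeping is to transfer the subtrees $\mathcal{B}, \mathcal{R}, \mathcal{B}_i, \mathcal{R}_i$ of $\mathcal{G}_\mathcal{U}$ onto $\mathcal{G}_{F(\mathcal{U})}$ by applying (\ref{isom homseq}) locally around each vertex and invoking Lemma \ref{Ext quiver of type A with pro inj} to recognize the resulting gentle-tree shape; the alternation of colors attached to $F(S^{(t_1)}_{p-1})[k_1]$ versus the original colors at $S^{(t_2)}_{p-1}[k_2]$ is exactly the $\mathfrak{X}^{\times}$-swap recorded in (\ref{forward mutation of gentle trees}). The main obstacle I anticipate is the mixed-pair Hom computation: one must make sure the preparation triangle together with the SMC hypothesis both kills every negative-degree Hom and controls positive-degree Homs tightly enough that no spurious arrow enters $Q_{F(\mathcal{U})}$ to spoil the gentle-tree property. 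Elsewhere the argument is a routine transport of structure through Lemma \ref{isom homs}.
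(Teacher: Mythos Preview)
Your proposal is correct and aligns with the paper's approach: the paper does not give a detailed proof but simply invokes Lemmas \ref{isom homs} and \ref{Ext quiver of type A with pro inj} together with \cite[Thm.~2.11]{qext}, which is exactly the skeleton you flesh out. Your explicit computation of the new arrow degree $k_1-k_2+2$ via the triangle and Lemma \ref{non-zero morphism between objects} is the content behind the paper's bare citation of (\ref{isom homseq of p-1 S}); note that (\ref{isom homseq of p-1 S}) is an isomorphism of \emph{total} Hom-spaces, not a graded one, so your direct degree computation is in fact the cleaner way to see the $+2$ shift. One small point you should make explicit in the write-up: for the SMC axiom you also need the reverse direction $\Hom^{d}(S^{(t_2)}_{p-1}[k_2], F(S^{(t_1)}_{p-1})[k_1])=0$ for $d\le 0$, which follows by the same Lemma \ref{non-zero morphism between objects} argument (the top $S_{p-1}$ of $S^{(t_2)}_{p-1}$ is not a composition factor of $S^{(p-t_1)}_{p-1-t_1}$, and the $\Ext^1$ vanishes since $t_1>t_2$).
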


\begin{prop} \label{Ext quiver of rank one}
Let
$\mathcal{S}=\{\tau^{p-2}S_{a_{}}, \cdots, \tau^{}S_{a_{}},  S_{a_{}}\}$ and $\mathcal{X}=\mathop{\cup}\limits_{i=1}^{n}\mathcal{X}_{i}$ as in (\ref{collection of X}).
Then the Ext-quiver of $\mathcal{X}$ in $\mathcal{D}^{b} (${\rm{\textbf{T}}}$_p)$ is  the associated quiver of graded gentle one-cycle quiver $\widetilde{\mathcal{G}}_{}$ with rank $r=1$ as in (\ref{graph of graded gentle one cycle trees}).
\end{prop}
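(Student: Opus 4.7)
Under the hypothesis $\lvert \mathcal{S}\rvert = p-1$ one has $n = 1$, $l_1 = p-1$, and the interval $a_0 < b < a_1 - l_1$ is empty, so (3.3) collapses to $\mathcal{X} = \{S_a^{(p)}\} \cup F(\mathcal{U}_1)$, where $\mathcal{U}_1$ is a SMC of type $\mathbb{A}$ in $\mathcal{D}^{b}(\mathcal{A}_{p-1})$. By Proposition 3.8, $\mathcal{X}_{\tube} = \{S_a^{(p)}\}$, so I set $X_1 := S_a^{(p)}$; this will serve as the unique cycle vertex of $C_1$ in the target graded gentle one-cycle quiver $\widetilde{\mathcal{G}}$ of rank $r = 1$ (Definition 5.3).

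\textbf{The two pieces.} The Ext-quiver restricted to $F(\mathcal{U}_1)$ is isomorphic, as a graded quiver, to that of the type-$\mathbb{A}$ SMC $\mathcal{U}_1$, via the isomorphism $\Hom^{\bullet}(F(X), F(Y)) \cong \Hom^{\bullet}(X, Y)$ of Lemma 3.2; so by the type-$\mathbb{A}$ Ext-quiver classification (Lemmas 5.1, 5.2, and Qiu's result in \cite{qext}) it is the associated quiver of a graded gentle tree $\mathcal{G}_{F(\mathcal{U}_1)}$. Relative to the cycle vertex, this provides the decomposition $\mathcal{G}_1 : \mathcal{B}_1 \rightarrow X_1 \rightarrow \mathcal{R}_1$ appearing in Definition 5.3. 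At the new vertex $X_1$, the self-extension $\Ext^{1}(X_1, X_1) \cong \D\Hom(S_a^{(p)}, S_{a-1}^{(p)}) \cong \textbf{k}$ (by AR-duality and Lemma 2.2, since $S_a^{(p)}$ has all simples as composition factors) produces a loop of degree $1$; and Lemma 3.1(2)--(3) yields $\Hom^{\bullet}(X_1, Y) = 0$ for every $Y \in F(\mathcal{U}_1)$, so $X_1$ has no outgoing arrows.

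\textbf{Assembly and main obstacle.} The incoming edges at $X_1$ are computed by applying $\Hom(-, X_1)$ to the triangle $S_a^{(p)} \rightarrow S_a^{(t)} \rightarrow S_{a-t}^{(p-t)}[1] \rightarrow S_a^{(p)}[1]$ that defines $F$, and reading off contributions via Lemma 2.2. Combined with Lemma 3.2, this shows that the endpoints of $\mathcal{G}_{F(\mathcal{U}_1)}$ adjacent to $X_1$ supply the direct arrows $\mathcal{B}_1 \rightarrow X_1$ and $X_1 \rightarrow \mathcal{R}_1$ of degrees $\delta_1$ and $\gamma_1$, together with an additional arrow $\mathcal{B}_1 \rightarrow \mathcal{R}_1$ of degree $\delta_1 + \gamma_1 + 1$ coming from the non-vanishing $\Ext^{2}$-contribution; this realizes precisely the ``loop-once'' composition $\mathcal{B}_1 \rightarrow X_1 \rightarrow X_1 \rightarrow \mathcal{R}_1$ of Definition 5.3, while the direct composition $\mathcal{B}_1 \rightarrow X_1 \rightarrow \mathcal{R}_1$ is forced to land in a zero Hom space (again by Lemma 2.2) and hence vanishes. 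The main obstacle is this final bookkeeping: tracking the degree shifts in the long exact sequence of the triangle and matching each non-zero Hom to exactly one arrow prescribed by $\widetilde{\mathcal{G}}$, so that no spurious edges appear and no prescribed edge is missed, yielding the identification of the Ext-quiver of $\mathcal{X}$ with the associated quiver of a rank-$1$ graded gentle one-cycle quiver on $p$ vertices.
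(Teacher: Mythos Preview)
Your overall strategy --- isolate $X_1=S_a^{(p)}$, describe the Ext-quiver on $F(\mathcal{U}_1)$ as a graded gentle tree, then glue in the loop and the cross-terms --- matches the paper's. But there is a genuine error in the execution. The claim that Lemma~3.1(2)--(3) gives $\Hom^{\bullet}(X_1,Y)=0$ for \emph{every} $Y\in F(\mathcal{U}_1)$ is false. By definition (equation~(3.3)), $F(\mathcal{U}_1)=\{F(S_{j_m}^{(t_m)})[k_m]\mid k_m\geq 0\}\cup\{S_{j_m}^{(t_m)}[k_m]\mid k_m<0\}$ is \emph{not} obtained by applying $F$ to every element. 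Lemma~3.1(3) gives $\Hom^{\bullet}(S_a^{(p)},F(S_a^{(t)}))=0$, but says nothing about $\Hom^{\bullet}(S_a^{(p)},S_a^{(t)})$; in fact $\Hom^0(S_a^{(p)},S_a^{(t)})\cong\mathbf{k}\cong\Hom^1(S_a^{(p)},S_a^{(t)})$ for $1\le t<p$ by Lemma~2.2. So the objects $S_a^{(t_m)}[k_m]$ with $k_m<0$ in $F(\mathcal{U}_1)$ \emph{do} receive arrows from $X_1$, and these are exactly the arrows $X_1\to\mathcal{R}_1$ in the target quiver. Your proof is therefore internally inconsistent: you assert $X_1$ has no outgoing arrows and then invoke $X_1\to\mathcal{R}_1$ in the next paragraph.

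There is a related gap in the treatment of $F(\mathcal{U}_1)$ itself. Lemma~3.2's first isomorphism only computes $\Hom^{\bullet}(F(X),F(Y))$; it does not handle the mixed Homs between the $F$-applied part ($k_m\ge 0$) and the non-$F$-applied part ($k_m<0$). The paper addresses this through Lemma~5.4, which shows that $\mathcal{G}_{F(\mathcal{U})}$ is not literally $\mathcal{G}_{\mathcal{U}}$: the two-step path through the former middle vertex is replaced by a single edge of degree $k_1-k_2+2$. Without this (or an equivalent direct computation, as in the paper's explicit $\Hom$-calculations between $S_{p-1}^{(p)}$ and $S_{p-1}^{(t)}[k]$), you cannot correctly identify the vertices of $F(\mathcal{U}_1)$ adjacent to $X_1$, nor verify the defining rank-$1$ feature that the direct composition $\mathcal{B}_1\to X_1\to\mathcal{R}_1$ vanishes while the loop-composite survives with degree $\delta_1+\gamma_1+1$.
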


\begin{proof}
Up to $\tau$-actions of \rm{\textbf{T}}$_p$, we assume $S^{}_{a_{}}=S_{p-1}^{}$.
Then $S_{p-1}^{(p)}\in\mathcal{X}$  and $\Hom^{d}(S_{p-1}^{(p)}, S_{p-1}^{(p)})\cong\textbf{k}$ with $d>0$ if and only if $d=1$.
By Lemma \ref{Ext quiver of F(U)}, the Ext quiver of  $F(\mathcal{U})$ in $\mathcal{D}^{b}(\mathcal{A}_{p-1})$ is the associated quiver of $\mathcal{G}_{F(\mathcal{U})}$ as in (\ref{more graded F(U) form}).
Let $S_{j}^{(t)}[k]\in\mathcal{X}$ and $t<p-t_{1},  t<t_{2}$, then
\begin{equation*}
\Hom^{\bullet}(S_{p-1}^{(p)}, S_{j}^{(t)}[k])\neq0\iff S_{j}^{(t)}=S_{p-1}^{(t)}\iff\Hom^{1}( F(S_{p-1}^{(t_{1})}), S_{j}^{(t)})\neq0\neq\Hom^{0}(S_{p-1}^{(t_{2})}, S_{j}^{(t)});
\end{equation*}
\begin{equation*}
\xymatrix@C=4.8em@R=3.5ex@M=1pt@!0{
\Hom^{\bullet}(S_{j}^{(t)}[k], S_{p-1}^{(p)})\neq0\iff S_{j}^{(t)}[1]=F(S_{p-1}^{(p-t)})\iff\Hom^{-1}(S_{j}^{(t)}, F(S_{p-1}^{(t_{1})}))\neq0\neq\Hom^{1}(S_{j}^{(t)}, S_{p-1}^{(t_{2})}). \\ }
\end{equation*}
It follows that $\Hom^{-k}(S_{p-1}^{(p)}, S_{j}^{(t)}[k])$ $\cong\textbf{k}\cong\Hom^{1-k}(S_{p-1}^{(p)}, S_{j}^{(t)}[k])$ and $\Hom^{1-k}(F(S_{j}^{(t)})[-k], S_{p-1}^{(p)})$
$\cong\textbf{k}\cong\Hom^{2-k}(F(S_{j}^{(t)})[-k], S_{p-1}^{(p)})$ if $S_{j}^{(t)}=S_{p-1}^{(t)}$;
and
$\Hom^{1-k}(F(S_{p-1}^{(t_{1})}), S_{p-1}^{(t_{2})}[k])\cong\textbf{k}$
if
$\Hom^{-k}(S_{p-1}^{(t_{1})}, S_{p-1}^{(t_{2})}[k])$
$\cong\textbf{k}$ with $k<0$ and $t_{2}<t_{1}<p$.
By combining the above arguments, we are done.
\end{proof}

\subsection{Quiver mutations}

Mutation of quivers is defined without reference to any SMC in $\mathcal{D}^{b} (${\rm{\textbf{T}}}$_p)$. It is an interesting problem to
``\,lift\," mutation of quivers to ``\,mutation\," of SMCs associated with the quivers.
Recall that $\mu_{V}\mathcal{G}$ is the forward (left) mutation at vertex  $V\in\mathcal{G}$ on a graded gentle tree $\mathcal{G}$ as in (\ref{forward mutation of gentle trees}).

In the following we define a mutation $\mu_{}$ on graded gentle one-cycle quiver.
\begin{defn}
For a graded gentle one-cycle quiver $\widetilde{\mathcal{G}}$ as in (\ref{graph of graded gentle one cycle trees}), we write
$\mathcal{G}_{i-1,i+2}$ for a graded subquiver of $\widetilde{\mathcal{G}}$ going from $\mathcal{G}_{i-1}$ to $\mathcal{G}_{i+2}$ with  straight line  and curly line colors  such that
\begin{equation} \label{neigbours of V on graded gentle one cycle}
\xymatrix@C=3.5em@R=4.4ex@M=0.5pt@!0{
&&&&\mathcal{B}_{i}\ar@{->}[rd]^-{\delta_{i}}&&\mathcal{R}_{i}&&& \\
&&\widetilde{\mathcal{G}}=&\mathcal{R}_{i+1}&&
X_{i}\ar@{->}[rd]^-{1}\ar@{-->}[ru]^-{\gamma_{i}}&&
&&  \\
&&&\mathcal{B}_{i+1}\ar@{-->}[r]_-{\delta_{i+1}}
&X_{i+1}\ar@{~>}[lu]_-{\gamma_{i+1}}\ar@{-->}[ru]^-{1}&&
\mathcal{G}_{i-1,i+2}\ar@{~>}[ll]^-{1}&&  \\ }
\end{equation}
We define the \emph{left mutation} $\mu_{V}$ at vertex $V\in\mathcal{G}_{i}$ for any $i$ on  $\widetilde{\mathcal{G}}$  as in (\ref{neigbours of V on graded gentle one cycle}) as follows:
\begin{itemize}
\item [(1)] if $V\in\mathcal{R}_{i}$ is a direct successor of $X_{i}$ with the connecting arrow has degree one such that
\begin{equation*}
\xymatrix@C=3.5em@R=4.4ex@M=0.5pt@!0{
&&&&&\mathcal{L}_{V}\ar@{->}[rd]^-{l_{V}}&&\mathcal{D}_{V}&&&  \\
&&&X_{i}\ar@{-->}[r]^-{1}&\mathcal{R}_{i}=&&
V\ar@{->}[rd]^-{r_{V}}\ar@{-->}[ru]^-{d_{V}}&&&&  \\
&&&&&X_{i}\ar@{-->}[ru]^-{1}&&\mathcal{R}_{V}&&&&  \\ }
\end{equation*}
then $\mu_{V}$ on  $\widetilde{\mathcal{G}}$ is:
\begin{equation*}
\xymatrix@C=3.5em@R=4.4ex@M=0.5pt@!0{
&&\mathcal{B}_{i}\ar@{->}[rd]^-{\delta_{i}}&
\mathcal{L}_{V}\ar@{->}[r]^-{l_{V}}&
V\ar@{->}[r]_-{r_{V}}\ar@{-->}[d]^-{d_{V}}&\mathcal{R}_{V}&&&
&\mathcal{L}'_{V}\ar@{->}[rd]^-{l'_{V}}&&\mathcal{D}'_{V}
&(\mathcal{B}_{i}^{\times})^{c}\ar@{~>}[d]^-{\delta_{i}}&&&&&&&  \\
&\mathcal{R}_{i+1}&&X_{i}\ar@{->}[rd]^-{1}\ar@{-->}[ru]_-{1}&
\mathcal{D}_{V}&&
\ar@/^/[r]^{\mu_{V}}&&
\mathcal{R}^{s}_{i+1}&&
V\ar@{-->}[ru]^-{d'_{V}}\ar@{->}[rr]^-{1}&&
X_{i}\ar@{~>}[ld]^-{1}\ar@{->}[d]^-{r_{V}}&&&  \\
&\mathcal{B}_{i+1}\ar@{-->}[r]_-{\delta_{i+1}}&
X_{i+1}\ar@{~>}[lu]_-{\gamma_{i+1}}\ar@{-->}[ru]^-{1}&&
\mathcal{G}_{i-1,i+2}\ar@{~>}[ll]^-{1}&&&&
\mathcal{B}_{i+1}^{s}\ar@{-->}[r]_-{\delta_{i+1}}&
X_{i+1}\ar@{->}[lu]_-{\gamma_{i+1}}\ar@{-->}[ru]^-{1}&&
\mathcal{G}^{\times}_{i-1,i+2}\ar@{->}[ll]^-{1}&\mathcal{R}_{V}^{c}&&&&&&&&&  \\ }
\end{equation*}
where
\xymatrix@C=2.4em@R=2.8ex@M=1.2pt@!0{
\mathcal{L}'_{V}\ar@{->}[r]^-{l'_{V}}&V\\ }    \xymatrix@C=2.4em@R=2.8ex@M=1.2pt@!0{ \ar@{~>}[r]^-{d'_{V}}&\mathcal{D}'_{V}\\ }
\xymatrix@C=2.4em@R=2.8ex@M=1.2pt@!0{
=\mu_{V}(&\mathcal{L}_{V}\ar@{->}[r]^-{l_{V}}&V\\ }    \xymatrix@C=2.4em@R=2.8ex@M=1.2pt@!0{ \ar@{~>}[r]^-{d_{V}}&\mathcal{D}_{V})\\ }
,
$\mathfrak{X}^{s}$ (resp. $\mathfrak{X}^{c}$) is the operation of replacing the curly (resp. dotted) line color by the straight (resp. curly) one on $\mathfrak{X}$;

\item [(2)] if $V=X_{i}$, then $\mu_{X_{i}}$ on  $\widetilde{\mathcal{G}}$ is:
\begin{equation*}
\xymatrix@C=3.5em@R=4.4ex@M=0.5pt@!0{
&&\mathcal{B}_{i}\ar@{->}[rd]^-{\delta_{i}}&&\mathcal{R}_{i}&&&&&
\mathcal{B}'_{i}\ar@{-->}[rd]^-{\delta'_{i}}&&
\mathcal{R}'_{i}
&&&&&&&&  \\
&\mathcal{R}_{i+1}&&
X_{i}\ar@{-->}[ru]^-{\delta_{i}}\ar@{->}[rd]^-{1}&&
&\ar@/^/[r]^{\mu_{X_{i}}}&&
\mathcal{R}_{i+1}^{s}&&
X_{i}\ar@{->}[ru]^-{\gamma'_{i}}\ar@{-->}[ld]_-{1}&
\mathcal{B}_{i+1}^{s}\ar@{->}[l]^-{\delta_{i+1}}&&&&  \\
&\mathcal{B}_{i+1}\ar@{-->}[r]_-{\delta_{i+1}}&
X_{i+1}\ar@{~>}[lu]_-{\gamma_{i+1}}\ar@{-->}[ru]^-{1}&&
\mathcal{G}_{i-1,i+2}\ar@{~>}[ll]^-{1}&&&&&
X_{i+1}\ar@{->}[lu]_-{\gamma_{i+1}}\ar@{-->}[rr]^-{1}&&
(\mathcal{G}_{i-1,i+2}^{\times})^{d_{i-1}}\ar@<1.2ex>[ll]^-{1}&&&&&&&  \\ }
\end{equation*}
where
\xymatrix@C=2.48em@R=3.1ex@M=1.2pt@!0{
\mathcal{B}'_{i}\ar@{-->}[r]^-{\delta'_{i}}&X_{i}\\ }    \xymatrix@C=2.4em@R=3.1ex@M=1.2pt@!0{ \ar@{->}[r]^-{\gamma'_{i}}&\mathcal{R}'_{i}\\ }
\xymatrix@C=2.4em@R=3.1ex@M=1.2pt@!0{
=(\mu_{X_{i}}(&
\mathcal{B}_{i}\ar@{->}[r]^-{\delta_{i}}&X_{i}\\ }
\xymatrix@C=2.4em@R=3.1ex@M=1.2pt@!0{ \ar@{-->}[r]^-{\gamma_{i}}&\mathcal{R}_{i}))^{\times}\\ }
,
$(\mathcal{G}_{i-1,i+2}^{\times})^{d_{i-1}}$  is the operation of
replacing only the curly line color by the dotted one on the graded subquiver $\mathcal{G}_{i-1}^{\times}$ of\, $\mathcal{G}_{i-1,i+2}^{\times}$;
particularly, when $rk(\widetilde{\mathcal{G}})=1$, we let
$\mathcal{B}'_{1}=\mathcal{B}_{1}$ with $\delta'_{1}=\delta_{1}-1$ and
$\mathcal{R}'_{1}=\mathcal{R}_{1}$ with $\gamma'_{1}=\gamma_{1}+1$ if $\delta_{1}>1$; and
\xymatrix@C=2.4em@R=3.1ex@M=1.2pt@!0{
\mathcal{B}'_{1}\ar@{->}[r]^-{\delta'_{1}}&X_{1}\\ }    \xymatrix@C=2.4em@R=3.1ex@M=1.2pt@!0{ \ar@{-->}[r]^-{1}&\mathcal{B}''_{1}\\ }
\xymatrix@C=2.4em@R=3.1ex@M=1.2pt@!0{
=\mu_{X_{1}}( \\ }
\xymatrix@C=3em@R=3.1ex@M=1.2pt@!0{
\mathcal{B}_{1}\ar@{->}[r]^-{1}&
X_{1})\\ }
and
\xymatrix@C=3em@R=3.1ex@M=1.2pt@!0{
\mathcal{R}'_{1}=&\mathcal{B}''^{\times}_{1}\ar@{->}[r]^-{\gamma_{1}}&
\mathcal{R}_{1}\\ }
with $\gamma'_{1}=1$
if $\delta_{1}=1$, then $\mu_{X_{1}}$ on  $\widetilde{\mathcal{G}}$ is:
\begin{equation*}
\xymatrix@C=3.6em@R=2.7ex@M=0.2pt@!0{
&\mathcal{B}_{1}\ar@/_1.6pc/@{{}{x}{}}[rr]
\ar@{->}[r]^-{\delta_{1}}&X_{1}^{\,\ar@(l, u)^-{1}\;\;\;  }\ar@{->}[r]^-{\gamma_{1}}&\mathcal{R}_{1}&&
\ar@/^/[r]^{\mu_{X_{1}}}&&
\mathcal{B}'_{1}\ar@/_1.6pc/@{{}{x}{}}[rr]
\ar@{->}[r]^-{\delta'_{1}}&X_{1}^{\,\ar@(l, u)^-{1}\;\;\;  }\ar@{->}[r]^-{\gamma'_{1}}&\mathcal{R}'_{1}&&\\
&&&&&&&\\
&&&&&&&\\ }
\end{equation*}
\item [(3)] otherwise,  $\mu_{V}$ on  $\widetilde{\mathcal{G}}$ is:
\begin{equation*}
\xymatrix@C=3.45em@R=4.4ex@M=0.5pt@!0{
&&&\mathcal{B}_{i}\ar@{->}[rd]^-{\delta_{i}}&&\mathcal{R}_{i}&&&&
\mathcal{B}'_{i}\ar@{->}[rd]^-{\delta'_{i}}&&\mathcal{R}'_{i}&&&&&&& \\
&&\mathcal{R}_{i+1}&&
X_{i}\ar@{->}[rd]^-{1}\ar@{-->}[ru]^-{\gamma_{i}}&&
\ar@/^/[r]^{\mu_{V}}&&
\mathcal{R}_{i+1}&&
X_{i}\ar@{->}[rd]^-{1}\ar@{-->}[ru]^-{\gamma'_{i}}&&&&&&&&  \\
&&\mathcal{B}_{i+1}\ar@{-->}[r]_-{\delta_{i+1}}
&X_{i+1}\ar@{~>}[lu]_-{\gamma_{i+1}}\ar@{-->}[ru]^-{1}&&
\mathcal{G}_{i-1,i+2}\ar@{~>}[ll]^-{1}&&&
\mathcal{B}_{i+1}\ar@{-->}[r]_-{\delta_{i+1}}
&X_{i+1}\ar@{~>}[lu]_-{\gamma_{i+1}}\ar@{-->}[ru]^-{1}&&
\mathcal{G}_{i-1,i+2}\ar@{~>}[ll]^-{1}&&&&&&  \\ }
\end{equation*}
where
$\mathcal{R}'_{i}=(\mu_{V}\mathcal{R}_{i})^{\times}$  for $V\in\mathcal{R}_{i}$ (resp. $\mathcal{R}'_{i}=\mathcal{R}_{i}$) with $\gamma'_{i}=\gamma_{i}$ and $\mathcal{B}'_{i}=\mathcal{B}_{i}$ (resp. $\mathcal{B}'_{i}=(\mu_{V}\mathcal{B}_{i})^{\times}$   for $V\in\mathcal{B}_{i}$) with $\delta'_{i}=\delta_{i}$ if  there is a unicolor path between $X_{i}$ and the direct predecessor of $V$ with the connecting arrow of another color has degree one;
and $\mathcal{R}'_{i}=\mu_{V}\mathcal{R}_{i}$ for $V\in\mathcal{R}_{i}$ (resp. $\mathcal{R}'_{i}=\mathcal{R}_{i}$) with $\gamma'_{i}=\gamma_{i}$ and $\mathcal{B}'_{i}=\mathcal{B}_{i}$ (resp. $\mathcal{B}'_{i}=\mu_{V}\mathcal{B}_{i}$  for $V\in\mathcal{B}_{i}$) with $\delta'_{i}=\delta_{i}$ if else.
\end{itemize}
Dually, we define the \emph{right mutation} $\mu^{-1}_{V}$ to be the reverse of $\mu_{V}$  on  $\widetilde{\mathcal{G}}$ (which follows a similar rule).
\end{defn}

Clearly, the set of  all graded gentle one-cycle quivers with $p$ vertices is closed under such mutation.
Moreover by similar arguments as in \cite[Lem.\;2.9]{qext}, we obtain this set is also connected under mutations.

\begin{lem}  \label{connectedness of graded gentle one-cycle quiver}
Any graded gentle one-cycle quiver with $p$ vertices can be repeatedly mutated from another graded gentle one-cycle quiver with $p$ vertices.
\end{lem}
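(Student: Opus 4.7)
The plan is to reduce the statement to the already-established connectedness of the exchange graph $\EG(\textbf{T}_p)$ (Theorem 4.6) via the bijection between SMCs and graded gentle one-cycle quivers provided by Propositions 5.6 and 5.8. Concretely, given two graded gentle one-cycle quivers $\widetilde{\mathcal{G}}$ and $\widetilde{\mathcal{G}}'$ with $p$ vertices, I will first realise them as Ext-quivers of SMCs $\mathcal{X}$ and $\mathcal{X}'$ in $\mathcal{D}^{b}(\textbf{T}_p)$. By Proposition 5.6 (for rank $r\geq 2$) and Proposition 5.8 (for rank $r=1$), together with the observation that every SMC up to shift is of the form $\mathcal{X}_{0}$ or $\bigcup_{i=1}^{n}\mathcal{X}_{i}$ as in (3.4), each such quiver is realisable this way; whenever the one-cycle has rank $p$ we use $\mathcal{X}_0$, and when the rank is smaller the prescription of the subtrees $\mathcal{B}_i,\mathcal{R}_i$ dictates the choice of the pre-SMC $\mathcal{U}$ of type $\mathbb{A}$, and hence of $\mathcal{X}$, via Theorem 3.10.

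Next I will invoke Theorem 4.6 to produce a finite sequence of left/right SMC mutations
\begin{equation*}
\mathcal{X}=\mathcal{X}^{(0)}\;\xrightarrow{\mu_{i_1}^{\pm}}\;\mathcal{X}^{(1)}\;\xrightarrow{\mu_{i_2}^{\pm}}\;\cdots\;\xrightarrow{\mu_{i_s}^{\pm}}\;\mathcal{X}^{(s)}=\mathcal{X}'
\end{equation*}
inside $\EG(\textbf{T}_p)$. The heart of the matter is then to verify compatibility: for each vertex $V$ of $\widetilde{\mathcal{G}}_{\mathcal{X}^{(k)}}$ corresponding to an object $X\in\mathcal{X}^{(k)}$, the Ext-quiver of $\mu_{X}^{+}(\mathcal{X}^{(k)})$ is exactly $\mu_{V}(\widetilde{\mathcal{G}}_{\mathcal{X}^{(k)}})$ as defined in the three cases of the quiver mutation $\mu_V$. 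This is done by inspecting the triangle $X[-1]\xrightarrow{g}X_{ij}\to Y\to X$ defining the cone at each neighbour, using Lemmas 2.2, 3.4, 3.5 and 5.1 to compute the graded Hom-spaces between the new collection: the effect on the subtrees $\mathcal{B}_i$ and $\mathcal{R}_i$ reduces to Qiu's mutation rule (3.6) for graded gentle trees of type $\mathbb{A}$ handled in Lemmas 5.1 and 5.2, while the effect on the cycle $C_r$ and on the edges joining $\mathcal{G}_{i-1}$ with $\mathcal{G}_{i+2}$ is controlled by the vanishing statements in Lemma 3.4 (no paths between non-adjacent pieces) and by Lemma 5.3 (the $F$-twist acting on cycle vertices of rank one, which accounts for the self-loop case in the definition of $\mu_{X_1}$).

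Assembling these compatibilities, every SMC mutation $\mu_{X}^{\pm}$ in the chain above translates into a graded gentle one-cycle quiver mutation $\mu_{V}^{\pm}$, and therefore $\widetilde{\mathcal{G}}$ is transported to $\widetilde{\mathcal{G}}'$ by the composite sequence of $\mu_{V}^{\pm}$'s. Since the starting pair was arbitrary, the set of graded gentle one-cycle quivers with $p$ vertices is connected under mutation.

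The main obstacle will be the case-analysis for the compatibility statement: there are essentially three kinds of mutations to check (the one in the subtree $\mathcal{R}_i$ or $\mathcal{B}_i$ away from the cycle, the one at a direct successor of a cycle vertex $X_i$ where a new cycle edge of degree one must be produced, and the one at a cycle vertex $X_i$ itself, including the degenerate rank-one self-loop case). In each, one must track how colours swap (the $(-)^{\times},(-)^{s},(-)^{c}$ operations in the definition) and how degrees shift, and then match these against the degree shifts produced by the triangle $g_j\colon X_j[-1]\to X_{ij}$ as computed from Lemma 4.1 and the Hom-formulas in Lemma 5.3. Beyond this bookkeeping, the argument is structural and reduces cleanly to Theorem 4.6.
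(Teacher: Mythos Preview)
Your approach is quite different from the paper's and, as written, has a circularity problem.

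The paper proves this lemma purely combinatorially, at the level of graded gentle one-cycle quivers themselves, by adapting the argument of \cite[Lem.~2.9]{qext} for graded gentle trees: one shows directly that any such quiver can be mutated to a canonical form (say the graded cyclic quiver $C_p$ with all degrees one). No SMCs, no $\EG(\textbf{T}_p)$, no compatibility statement are used. In the paper's logical architecture, Lemma~\ref{connectedness of graded gentle one-cycle quiver} is an \emph{input} to the main theorem on Ext-quivers: it is precisely what is used there to prove that every graded gentle one-cycle quiver is realised as the Ext-quiver of some SMC.

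Your Step~1 asserts exactly this surjectivity (``each such quiver is realisable this way''), appealing to Propositions~\ref{Ext quiver of rank more than  two} and~\ref{Ext quiver of rank one}. But those propositions only go in the direction SMC $\to$ quiver; the converse is not established anywhere prior to this lemma, and in the paper it is deduced \emph{from} this lemma in the final theorem. So as stated, your argument is circular. You gesture at a direct inverse construction (``the prescription of the subtrees $\mathcal{B}_i,\mathcal{R}_i$ dictates the choice of the pre-SMC $\mathcal{U}$''), and something along these lines could conceivably work using Qiu's bijection on each type-$\mathbb{A}$ subtree, but you would then have to verify that the resulting SMC built via (\ref{collection of X}) really has Ext-quiver equal to the original $\widetilde{\mathcal{G}}$---including the correct degrees on the cycle edges and the correct gluing at the $X_i$. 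This is essentially an inverse to Propositions~\ref{Ext quiver of rank more than  two} and~\ref{Ext quiver of rank one} and is not a triviality; it is not done in the paper and you do not carry it out.

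A secondary point: your argument also relies on the compatibility statement (Proposition~\ref{compatible with quiver mutation}), which in the paper appears \emph{after} this lemma. Its proof via Lemmas~\ref{general local mutations}--\ref{local mutations with less cycles} is logically independent of the present lemma, so this is not circular, but you are effectively importing a later result and re-deriving substantial parts of it inside this proof. By contrast, the paper's combinatorial argument is short and self-contained.
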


An essential ingredient for quiver mutation is proving reduction to three simple modules.
Then we have to establish a close relationship between arrows and relation spaces, that is, between
$\rm{Ext^{1}}$-groups.

Following \cite{qext}, the next result shows the mutations for objects whose graded quivers are gentle trees.

\begin{lem}  \label{general local mutations}
Let $\mathcal{X}$ be a SMC in $\mathcal{D}^{b}(${\rm{\textbf{T}}}$_p)$ as in (\ref{collection of X}).
For any the following sub-quivers

\xymatrix@C=3em@R=5.4ex@M=0.5pt@!0{
&&&S\ar@{->}[rd]^{a}&&&S\ar@{->}[rd]^{b}&&&S&&&S&&  \\
&&T\ar@{->}[ru]^{1}\ar@{->}[rr]^{a+1}&&A&T\ar@{->}[ru]^{1}&&B&
T\ar@{->}[ru]^{1}&&C\ar@{->}[lu]_-{c+1}\ar@{->}[ll]^{c}
&T\ar@{->}[ru]^{1}&&D\ar@{->}[lu]_-{d+1}&&&  \\ }
\ \
\\
in the Ext-quiver of $\mathcal{Q}(\widetilde{\mathcal{G}}_{\mathcal{X}})$ for some $S, T, A, B, C, D \in \mathcal{X}$ and positive integers $a, b, c, d$,  they become

\xymatrix@C=3em@R=5.4ex@M=0.5pt@!0{
&&&S[1]\ar@{->}[ld]_{1}\ar@{->}[rd]^{a+1}&&&S[1]\ar@{->}[ld]_{1}\ar@{->}[rd]^{b+1}&&&
S[1]\ar@{->}[ld]_{1}&&&S[1]\ar@{->}[ld]_{1}&&  \\
&&R&&A&R\ar@{->}[rr]^{b}&&B&
R&&C\ar@{->}[lu]_-{c}
&R&&D\ar@{->}[lu]_-{d}\ar@{->}[ll]^{d+1}&&&  \\ }
\ \ \\
in the Ext-quiver $\mathcal{Q}(\widetilde{\mathcal{G}}_{\mu^{+}_{S_{}}(\mathcal{X})})$, where $R$ is the non-trivial extension of $T$ on top of $S$.
\end{lem}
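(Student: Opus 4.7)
The plan is to compute the mutated Ext-quiver directly from the defining triangle of the new object $R$. Since $\Hom^{1}(T,S)\cong\mathbf{k}$ is generated by the displayed degree-one arrow $T\to S$, the minimal left approximation of $T[-1]$ in $\langle S\rangle$ is exactly this generator, and by construction $R$ is its cone. Rotating produces the defining triangle
$$S\longrightarrow R\longrightarrow T\xrightarrow{\tau}S[1],$$
where $\tau\in\Hom^{1}(T,S)$ is the chosen extension class. The outer vertices $A,B,C,D$ all satisfy $\Hom^{1}(-,S)=0$ or $\Hom^{1}(S,-)=0$ in the relevant direction (no degree-one arrow appears between them and $S$ in the displayed sub-quivers), so they are left untouched by $\mu^{+}_{S}$; only $S$ and $T$ get replaced, by $S[1]$ and $R$ respectively.

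First I would verify the universal arrow $S[1]\to R$ of degree one, common to all four output sub-quivers. Applying $\Hom^{\bullet}(S,-)$ to the defining triangle and using the SMC hypotheses $\Hom^{-1}(S,T)=0=\Hom(S,T)$ gives $\Hom(S,R)\cong\End(S)\cong\mathbf{k}$, i.e.\ $\Hom^{1}(S[1],R)\cong\mathbf{k}$. Next, for each of the four configurations I would apply $\Hom^{\bullet}(-,Z)$ with $Z=A,B$, respectively $\Hom^{\bullet}(Z,-)$ with $Z=C,D$, to the defining triangle and read off the claimed degrees from the resulting long exact sequence. The connecting morphism is Yoneda composition with $\tau$, and the hypothesis recorded by each sub-quiver translates into whether this composition is an isomorphism or vanishes on the relevant graded piece.

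In the commuting Cases 1 and 3, the higher-degree arrow factors through the degree-one arrow, so the connecting map is an isomorphism on the corresponding degree; this kills $\Hom(R,A)$ (resp.\ $\Hom(C,R)$) in that degree and yields the new arrow $S[1]\to A$ of degree $a+1$ (resp.\ $C\to S[1]$ of degree $c$) via $\Hom^{a+1}(S[1],A)\cong\Hom^{a}(S,A)\neq 0$. In the non-commuting Cases 2 and 4, the connecting map vanishes on the relevant piece and the long exact sequence instead produces a new arrow $R\to B$ of degree $b$ (resp.\ $D\to R$ of degree $d+1$), realised as an extension class rather than as a direct homomorphism, alongside the shifted arrow involving $S[1]$.

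The main obstacle is the careful identification of the connecting morphism in each long exact sequence as Yoneda composition with $\tau$, together with the translation of \emph{the sub-quiver commutes / does not commute} into \emph{this composition is an isomorphism / zero on one graded piece}. Once this identification is in place, each of the four cases reduces to a short diagram chase inside the corresponding long exact sequence, and the arithmetic of the degree shifts ($a\mapsto a+1$, $b\mapsto b+1$, $c$ unchanged in direction, $d\mapsto d+1$) simply reflects the replacement $S\mapsto S[1]$ in the mutated simple-minded collection.
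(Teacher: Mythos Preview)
The paper does not give its own proof of this lemma; it is stated ``following \cite{qext}'' and imported wholesale from Qiu's type-$\mathbb{A}$ computation, since the four displayed configurations lie entirely in the tree part $\mathcal{G}_i$ of the graded gentle one-cycle quiver (no vertex of the cycle other than possibly $X_i$ is involved, and $S$ sits strictly inside a subtree). Your argument via the defining triangle $S\to R\to T\xrightarrow{\tau}S[1]$ and the long exact sequences obtained from $\Hom^\bullet(-,Z)$ or $\Hom^\bullet(Z,-)$ is precisely the standard technique, and is what underlies the result in \cite{qext}; the identification of the connecting homomorphism with Yoneda product by $\tau$, and the dichotomy ``composed arrow present / absent in the associated quiver'' $\Leftrightarrow$ ``connecting map iso / zero on that graded piece'', are exactly the two ingredients one needs. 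So your plan is correct and essentially coincides with the cited proof.

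Two small points worth tightening. First, your phrase ``realised as an extension class rather than as a direct homomorphism'' for Cases~2 and~4 is a bit off: in Case~2 the new arrow $R\to B$ of degree $b$ comes from the isomorphism $\Hom^b(R,B)\cong\Hom^b(S,B)$ induced by $S\to R$, so it is a genuine map, and the output triangle $S[1]\to R\to B$ is again commuting (degrees $1+b=b+1$). Second, to conclude that the output pictures are the \emph{full} induced sub-quivers (no stray arrows in other degrees), you need that between any two of the listed objects all $\Hom$-spaces other than the displayed ones vanish; this is not automatic from the SMC axioms alone but follows from the gentle-tree description of the Ext-quiver (between two vertices of a tree there is at most one unicoloured path, hence at most one arrow in the associated quiver), which you are implicitly using and should make explicit.
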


The following two lemmas describe the mutations for objects whose graded quivers contain a cycle.

\begin{lem}  \label{local mutations with more cycles}
Let $\mathcal{X}$ be a SMC in $\mathcal{D}^{b} (${\rm{\textbf{T}}}$_p)$ as in (\ref{collection of X}).
For any the following sub-quivers

\xymatrix@C=2.8em@R=6.1ex@M=0.8pt@!0{ &&
&X_{1}\ar@<1.2ex>[r]^-{1}\ar@{->}[d]_-{1}&
X_{2}\ar@{->}[l]^-{1}\ar@{->}[ld]^-{2}&&
X_{1}\ar@{->}[ld]_-{1}\ar@{->}[rrd]_-{a+1}&X_{1}\ar@{->}[l]_-{1}
\ar@{->}[rd]^-{a+2}\ar@{->}[lld]^-{2}&&&
X_{1}\ar@{->}[ld]_-{1}&X_{1}\ar@{->}[l]_-{1}
\ar@{->}[lld]^-{2}&&  \\
&&&S&&S\ar@{->}[rrr]_-{a}&&&A& S&&&B_{}\ar@{->}[lu]_-{b}\ar@{->}[lll]^-{b+2}\ar@{->}[llu]^-{b+1}&&  \\}
\ \ \\
in the Ext-quiver $\mathcal{Q}(\widetilde{\mathcal{G}}_{\mathcal{X}})$ for some $S, X_{1}, X_{2}, A, B \in \mathcal{X}$ and positive integers $a, b$,  they become

\xymatrix@C=2.8em@R=6.1ex@M=0.8pt@!0{ &&
R\ar@{->}[rr]^-{1}&&
X_{2}\ar@{->}[ld]^-{1}&&
R&S[1]\ar@{->}[l]_-{1}\ar@{->}[d]_-{a+1}&R\ar@{->}[l]_-{1}\ar@{->}[ld]^-{a+2}& R&S[1]\ar@{->}[l]_-{1}&R\ar@{->}[l]_-{1}&&&&&  \\
&&&S[1]\ar@{->}[lu]^-{1}&&&&A&&&B\ar@{->}[lu]^{b+2}\ar@{->}[u]_-{b+1}&&&&  \\ }
\ \ \\
in the Ext-quiver $\mathcal{Q}(\widetilde{\mathcal{G}}_{\mu^{+}_{S_{}}(\mathcal{X})})$, where $R$ is the non-trivial extension of $X_{1}$ on top of $S$.
\end{lem}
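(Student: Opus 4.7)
The plan is to compute the effect of the left mutation $\mu_{S}^{+}$ on the SMC $\mathcal{X}$ in each of the three sub-quiver configurations, and then to translate the resulting $\Hom$-structure back into the Ext-quiver of $\mu_{S}^{+}(\mathcal{X})$. Throughout, I denote by $R$ the new object replacing $X_{1}$, which fits into the distinguished triangle
$$ X_{1}[-1] \xrightarrow{\,g\,} S \to R \to X_{1}, $$
where $g$ corresponds to the arrow $X_{1} \xrightarrow{1} S$ of degree one in the original Ext-quiver. This $g$ is the minimal left approximation (m.l.a.) of $X_{1}[-1]$ in the extension closure $\langle S \rangle$, since the relevant $\Hom$-space is one-dimensional under the gentleness assumption on $\widetilde{\mathcal{G}}_{\mathcal{X}}$.

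First I would identify the m.l.a.\ of each $X_{j}[-1]$ in $\langle S \rangle$ and write down the defining triangles of the mutated objects. In Case 1, the m.l.a.\ for $X_{1}[-1]$ is the above map into $S$, yielding $X_{1}' = R$; for $X_{2}[-1]$ the m.l.a.\ is trivial (the degree-$2$ arrow $X_{2} \to S$ factors through $X_{2} \xrightarrow{1} X_{1} \xrightarrow{1} S$, so $\Hom(X_{2}[-1], S) = 0$), whence $X_{2}' = X_{2}$. In Cases 2 and 3 the procedure is analogous for the $X_{1}$-vertex, but the self-loop at $X_{1}$ forces a more delicate analysis: $\langle S \rangle$ may contain self-extensions of $S$, so the m.l.a.\ for the copies of $X_{1}[-1]$ (respectively of $B$ in Case 3) must be computed by tracking both the direct degree-$1$ arrow into $S$ and the degree-$2$ one which arises via the cycle, following the pattern of the case $t=p$ treated in Lemma \ref{mutations of SMC}.

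Next, for each pair of mutated objects I would compute $\Hom^{\bullet}(X_{i}', X_{j}')$ by applying $\Hom(-, X_{j}')$ and $\Hom(X_{i}', -)$ to the defining triangle of $R$ and extracting the answer from the resulting long exact sequence. The SMC axioms, together with the original Ext-quiver data, pin down the dimensions of $\Hom^{k}(S, -)$ and $\Hom^{k}(X_{1}, -)$, so the long exact sequences uniquely determine $\Hom^{k}(R, -)$ and $\Hom^{k}(S[1], -)$. In particular the new arrows $S[1] \xrightarrow{1} R$, $R \xrightarrow{1} X_{2}$, and $X_{2} \xrightarrow{1} S[1]$ of Case 1 are all obtained directly from this triangle; and in Cases 2 and 3 the shifted arrows $S[1] \xrightarrow{a+1} A$, $R \xrightarrow{a+2} A$ (resp.\ $B \xrightarrow{b+1} S[1]$, $B \xrightarrow{b+2} R$) come out of the same long exact sequences with the parameters $a$ and $b$ preserved.

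The main obstacle will be the two cycle cases. The self-loop at $X_{1}$ in the original Ext-quiver generates extra compositions that have to be reconciled with the mutated cycle structure at $R$; one must verify both that no spurious arrows appear in $\mathcal{Q}(\widetilde{\mathcal{G}}_{\mu_{S}^{+}(\mathcal{X})})$ and that every arrow displayed in the target sub-quiver is produced with the correct degree. This reduces to a careful dimension count of $\Hom^{k}$-spaces via long exact sequences, exploiting the one-dimensional $\Hom$-spaces guaranteed by the gentleness of $\widetilde{\mathcal{G}}_{\mathcal{X}}$ together with the non-degeneracy of the connecting maps coming from the defining triangle of $R$.
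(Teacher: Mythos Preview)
Your overall strategy---write down the defining triangle for $R$ and extract the new $\Hom$-spaces from long exact sequences---is sound in principle, but it diverges from the paper's route and carries a concrete misstep.

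The paper does not work abstractly. Instead, it invokes the classification of SMCs (Propositions~\ref{heart forms} and~\ref{collection of simples of sub-tube}) to pin down the objects explicitly inside the tube: up to $\tau$-action one may take $X_{1}=S^{(p)}_{p-1}$, and then the arrow pattern forces $S=S^{(t_{1})}_{p-1}[-1]$ and $A=S^{(t_{2})}_{p-1}[-a-1]$ with $t_{2}<t_{1}$. The mutation triangle then reads $S^{(p)}_{p-1}[-1]\to S^{(t_{1})}_{p-1}[-1]\to S^{(p-t_{1})}_{p-1-t_{1}}\to S^{(p)}_{p-1}$, so $R=S^{(p-t_{1})}_{p-1-t_{1}}$ is an explicit tube object, and every required $\Hom^{k}$ is read off directly from Lemma~\ref{non-zero morphism between objects} (composition-factor combinatorics) rather than from long exact sequences. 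This is both shorter and avoids the problem of deciding which connecting maps in your long exact sequences are injective or surjective, which your proposal leaves as ``a careful dimension count''.

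Your claim that ``$\langle S\rangle$ may contain self-extensions of $S$'' is incorrect: the self-loop in Cases~2 and~3 sits at $X_{1}$, not at $S$, so $\Ext^{1}(S,S)=0$ and $\langle S\rangle=\add(S)$. The m.l.a.\ of $X_{1}[-1]$ is therefore into a single copy of $S$---the degree-$2$ arrow $X_{1}\to S$ lives in $\Hom(X_{1}[-2],S)$ and plays no role in the approximation. Your invocation of the $t=p$ case of Lemma~\ref{mutations of SMC} is also misplaced: that case treats mutation \emph{at} the length-$p$ object, whereas here we mutate at $S$, which has length $<p$. If you want to push through the abstract approach you will additionally need to feed in that $\Hom^{k}(X_{1},X_{1})=0$ for $k\geq 2$ (hereditariness of $\mathbf{T}_{p}$) to control the self-loop contributions; without this the long exact sequences alone do not determine, e.g., whether $R$ acquires a spurious self-loop.
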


\begin{proof}
We only prove the statement for the second sub-quiver, since the proofs for the other cases are similar.
Up to $\tau$-actions of \rm{\textbf{T}}$_p$, we assume  $X_{1}=S_{p-1}^{(p)}$.
Note that $\Hom^{1}(S^{(p)}_{p-1},\, S) \cong \textbf{k}$ if and only if $S=S^{(t_{1})}_{p-1}[-1]$; and $\Hom^{a+1}(S^{(p)}_{p-1},\, A) \cong \textbf{k}\cong\Hom^{a+2}(S^{(p)}_{p-1},\, A)$ if and only if $A=S^{(t_{2})}_{p-1}[-a-1]$ with $t_{2}<t_{1}$.
By the triangle  $\xi_{} : \ S^{(p)}_{p-1}[-1] \stackrel{g}{\longrightarrow} S^{(t_{1})}_{p-1}[-1] \rightarrow S^{(p-t_{1})}_{p-1-t_{1}} \rightarrow S^{(p)}_{p-1}$ with the m.l.a\; $g$ of $S^{(p)}_{p-1}$,
we get $\mu^{+}(S^{(t_{1})}_{p-1}[-1])= S^{(t_{1})}_{p-1}$ ,  $\mu^{+}(S^{(p)}_{p-1})=R=: S^{(p-t_{1})}_{p-1-t_{1}}$ and $\mu^{+}(S^{(t_{2})}_{p-1}[-a-1])= S^{(t_{2})}_{p-1}[-a-1]$.
Similarly as in Proposition \ref{equations of mutation},  we get
$\Hom^{1}(S^{(t_{1})}_{p-1}$,\,  $S^{(p-t_{1})}_{p-1-t_{1}})\cong$
$\Hom^{1}(S^{(p-t_{1})}_{p-1-t_{1}},\, S^{(t_{1})}_{p-1})\cong\textbf{k}$, and
$\Hom^{1}(S^{(p-t_{1})}_{p-1-t_{1}},\, S^{(t_{1})}_{p-1})\otimes$
$\Hom^{a+1}(S^{(t_{1})}_{p-1},\, S^{(t_{2})}_{p-1}[-a-1])\cong$
$\Hom^{a+2}(S^{(p-t_{1})}_{p-1-t_{1}},\, S^{(t_{2})}_{p-1}[-a-1])\cong\textbf{k}$.
We are done.
\end{proof}

By similar proofs as in Lemma \ref{local mutations with more cycles}, we have the following result.

\begin{lem}  \label{local mutations with less cycles}
Let $\mathcal{X}$ be a SMC in $\mathcal{D}^{b} (${\rm{\textbf{T}}}$_p)$ as in (\ref{collection of X}).
For any the following sub-quivers

\xymatrix@C=2.8em@R=6.1ex@M=1pt@!0{
&&&X_{1}\ar@<1.2ex>[r]^-{1}\ar@{->}[d]_{a+1}&
X_{2}\ar@{->}[l]^-{1}\ar@{->}[ld]^{a}&
&X_{1}\ar@<1.2ex>[r]^-{1}&
X_{2}\ar@{->}[l]^-{1}&
&X_{1}\ar@{->}[ld]_-{c}&X_{1}\ar@{->}[l]_-{1}\ar@{->}[lld]^-{c+1}&&&
X_{1}&X_{1}\ar@{->}[l]_-{1}&&  \\
&&&A&&&B\ar@{->}[ru]_-{b+2}\ar@{->}[u]^-{b+1}&
&C&&&T\ar@{->}[lu]_-{1}\ar@{->}[llu]^-{2}\ar@{->}[lll]^-{c+2}&
D\ar@{->}[ru]^-{d+2}\ar@{->}[rru]_-{d+1}\ar@{->}[rrr]_-{d}&&&T
\ar@{->}[lu]_-{1}\ar@{->}[llu]^-{2}&& \\ }
\ \ \\
in the Ext-quiver $\mathcal{Q}(\widetilde{\mathcal{G}}_{\mathcal{X}})$ for some $S=X_{1}, X_{2}, T, B, C, D \in\mathcal{X}$ and positive integers $a, b, c, d$,  they become

\xymatrix@C=2.5em@R=6.1ex@M=1pt@!0{
&&
&R\ar@{->}[ld]_-{a}&R\ar@{->}[l]_-{1}\ar@{->}[lld]^-{a+1}&&&
R&R\ar@{->}[l]_-{1}&&
X_{1}[1]\ar@{->}[rrrd]^-{1}\ar@{->}[d]^-{c+1}&&
X_{1}[1]\ar@{->}[rd]^-{2}\ar@{->}[ll]_-{1}\ar@{->}[lld]^-{c+2}&&
X_{1}[1]\ar@{->}[rrrd]^-{1}&&
X_{1}[1]\ar@{->}[ll]_-{1}\ar@{->}[rd]^-{2}&&&&&&\\
&&
A&&&X_{1}[1]\ar@{->}[lu]_-{1}\ar@{->}[llu]^-{2}\ar@{->}[lll]^-{a+2}&
B\ar@{->}[ru]^-{b+2}\ar@{->}[rru]_-{b+1}\ar@{->}[rrr]_-{b}&&&X_{1}[1]
\ar@{->}[lu]_-{1}\ar@{->}[llu]^-{2}&
C&&&R\ar@{->}[lll]^-{c}&
D\ar@{->}[u]_-{d+1}\ar@{->}[rru]_-{d}\ar@{->}[rrr]_-{d+2}&&&R&  \\ }
\ \ \\
in the Ext-quiver $\mathcal{Q}(\widetilde{\mathcal{G}}_{\mu^{+}_{S_{}}(\mathcal{X})})$, where $R$ is the non-trivial extension of $X_{1}$ on top of $X_{2}$ or $T$ respectively.
\end{lem}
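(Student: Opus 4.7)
The plan is to argue in direct parallel with the proof of Lemma~\ref{local mutations with more cycles}. In each of the four displayed sub-quivers the vertex $S=X_{1}$ bears a self-loop of degree $1$ (and, in several patterns, a degree-$2$ arrow to a translate of itself); since the only indecomposables in $\textbf{T}_{p}$ with non-vanishing self-$\Ext^{1}$ are the length-$p$ tubes, this forces $X_{1}$ to be a length-$p$ object, and up to $\tau$-action and a shift I would normalize $X_{1}=S^{(p)}_{p-1}$.

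Next, I would identify the remaining vertices from the prescribed Ext-data, exactly as in the preceding lemma. The double degree-$1$ arrows between $X_{1}$ and $X_{2}$ in the first two patterns force $X_{2}=S^{(t_{2})}_{p-1}[k_{2}]$ for a suitable $t_{2}<p$ and shift $k_{2}$, while the degree-$1$ and degree-$2$ arrows from $X_{1}$ to $T$ in the last two patterns force $T = S^{(t)}_{p-1}[\ell]$; the external vertices $A, B, C, D$ are then pinned down by their prescribed Hom-degrees with $X_{1}$ via Lemmas~\ref{non-zero morphism between objects} and~\ref{isom homs}, in exactly the manner that yields $A=S^{(t_{2})}_{p-1}[-a-1]$ with $t_{2}<t_{1}$ in the proof of Lemma~\ref{local mutations with more cycles}.

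With the vertices identified, I would compute the left mutation $\mu^{+}_{X_{1}}$ by the defining triangle. The minimal left approximation $X_{j}[-1]\to X_{1j}$ in $\mathrm{add}(X_{1})$ is non-trivial precisely when $\Hom(X_{j}[-1], X_{1})\neq 0$, and its cone is the indecomposable $R=S^{(p-t)}_{p-1-t}$ arising from the short exact sequence~(3) of Lemma~\ref{exact seqs in tube}, producing the exchange triangle
\[
X_{1}\longrightarrow R\longrightarrow X_{2}\longrightarrow X_{1}[1]
\]
(and its analogue with $T$ in place of $X_{2}$); thus $\mu^{+}(X_{1})=X_{1}[1]$, $\mu^{+}(X_{2})=R$ (resp.\ $\mu^{+}(T)=R$), and $\mu^{+}$ fixes the other relevant vertices by the vanishing in degree $0$ of their Hom-spaces with $X_{1}$. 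Applying $\Hom(R,-)$ and $\Hom(-,R)$ to the exchange triangle and chasing the long exact sequences then translates the input degrees $a, b, c, d$ into the advertised output degrees in $\mathcal{Q}(\widetilde{\mathcal{G}}_{\mu^{+}_{S}(\mathcal{X})})$, and simultaneously accounts for the appearance of the new degree-$1$ arrows at $R$ together with the disappearance of the old $X_{1}$-self-loop.

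The main obstacle will be the combinatorial bookkeeping across the four distinct patterns: each pattern produces several long exact sequences from which one must simultaneously verify which arrows survive, which acquire a degree shift by $\pm 1$, and which are killed by connecting maps coming from $\Hom^{\bullet}(X_{1}, R)\cong\textbf{k}$. In particular, the fourth pattern (with a double arrow from $X_{1}$ to $T$ coexisting with three arrows of degrees $d, d+1, d+2$ emanating from $D$) requires checking that the compositions through $R$ split precisely into the two arrows $D\to X_{1}[1]$ of degrees $d, d+1$ and the single arrow $D\to R$ of degree $d+2$, rather than producing extraneous morphisms. No new technique beyond the triangle computation carried out at the end of the proof of Lemma~\ref{local mutations with more cycles} is required.
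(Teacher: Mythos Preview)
Your overall strategy is precisely the paper's: it states only ``By similar proofs as in Lemma~\ref{local mutations with more cycles}'' and gives no further argument, so the parallel computation via the exchange triangle and long exact Hom-sequences is exactly what is intended.

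One small correction to your reading of the diagrams: in the first two patterns there is no self-loop on $X_{1}$; rather $X_{1}$ and $X_{2}$ sit on a $2$-cycle of degree-$1$ arrows (so both lie in $\mathcal{X}_{\tube}$, cf.\ Proposition~\ref{collection of simples of sub-tube}), and neither need have length $p$. The normalization $X_{1}=S^{(p)}_{p-1}$ via the self-extension argument applies only to patterns three and four, where the two copies of $X_{1}$ in the diagram encode the degree-$1$ loop. For patterns one and two you should instead identify $X_{1}, X_{2}$ as adjacent simples of a rank-$2$ tube subcategory, and then the extension $R$ of $X_{2}$ on top of $X_{1}$ acquires length $p$, producing the self-loop on $R$ in the output quiver. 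With this adjustment the triangle and long-exact-sequence bookkeeping you outline goes through unchanged.
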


Using Lemmas \ref{general local mutations}, \ref{local mutations with more cycles} and \ref{local mutations with less cycles}, a direct consequence of calculation gives the following proposition.

\begin{prop} \label{compatible with quiver mutation}
Let $\widetilde{\mathcal{G}}$ be a graded gentle one-cycle quiver and  $\mathcal{X}$ be a SMC\, in $\mathcal{D}^{b}(${\rm{\textbf{T}}}$_p)$.
If
$\mathcal{Q}(\widetilde{\mathcal{G}})=\mathcal{Q}(\mathcal{X})$  with vertex $V$ in $\widetilde{\mathcal{G}}$ corresponding to the object $S_{}$ in  $\mathcal{X}$, then
\begin{equation}  \label{commutative for quiver mutations}
\mathcal{Q}(\mu^{+}_{S}(\mathcal{X}))=
\mathcal{Q}(\mu^{}_{V}\widetilde{\mathcal{G}}),\quad \mathcal{Q}(\mu^{-}_{S}(\mathcal{X}))=
\mathcal{Q}(\mu^{-1}_{V}\widetilde{\mathcal{G}}).
\end{equation}
\end{prop}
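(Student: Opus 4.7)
The plan is to prove the equality by reducing to local configurations near the mutation vertex and then appealing to Lemmas \ref{general local mutations}, \ref{local mutations with more cycles}, and \ref{local mutations with less cycles}. The starting observation is that both operations are local: the SMC mutation $\mu^{+}_{S}$ modifies only those $X_j \in \mathcal{X}$ for which the minimal left approximation $g_j\colon X_j[-1]\to X_{S,j}$ in $\langle S\rangle$ is non-zero, equivalently those $X_j$ with $\Hom^{k}(S,X_j)\neq 0$ for some $k\geq 1$ (and the dual group); the quiver mutation $\mu_V$ modifies $\widetilde{\mathcal{G}}$ only on the subtrees $\mathcal{B}_i,\mathcal{R}_i$ and the cycle edges adjacent to $V$. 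Using Proposition \ref{collection of simples of sub-tube} together with Lemma \ref{vanising homs}, one checks that these two \emph{loci of influence} coincide under the identification $\mathcal{Q}(\widetilde{\mathcal{G}})=\mathcal{Q}(\mathcal{X})$, so it suffices to match the before/after pictures on each local sub-quiver listed in the three rules of Definition \ref{generalised graded gentle trees of rank n}.

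Next, I would do a case split on the position of $V$ in $\widetilde{\mathcal{G}}$, matching the clauses (1)--(3) defining $\mu_V$:
\begin{enumerate}
\item[(a)] $V$ lies in a subtree $\mathcal{R}_i$ or $\mathcal{B}_i$ and is \emph{not} adjacent to the cycle vertex $X_i$ via a degree-one arrow of the color shared with the cycle. Here only the type-$\mathbb{A}$ subtree is touched, so the computation reduces to the graded gentle tree mutation of Qiu for SMCs of type $\mathbb{A}$, and the result is Lemma \ref{general local mutations} combined with the classification given in Proposition \ref{heart forms} and Lemma \ref{Ext quiver of two type A with pro inj}.
\item[(b)] $V\in\mathcal{R}_i$ is a direct successor of $X_i$ with connecting arrow of degree one; this is the case where the mutation ``rotates'' the cycle and forces the color swaps $\mathfrak{X}^s,\mathfrak{X}^c$ appearing in rule (1). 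The Ext-quiver picture after mutation is exactly that described by Lemma \ref{local mutations with less cycles}, and one verifies by inspection that the resulting colored tree agrees with the one produced by Definition \ref{generalised graded gentle trees of rank n}(1).
\item[(c)] $V=X_i$ is a cycle vertex, including the rank-one special case with the loop at $X_1$. When $r\geq 2$ this is the generic cycle rotation governed by Lemma \ref{local mutations with more cycles}, which yields precisely the local picture of rule (2) with the $(\mathcal{G}_{i-1,i+2}^{\times})^{d_{i-1}}$ color adjustment. When $r=1$ one applies Proposition \ref{Ext quiver of rank one} directly: the identity at $X_1$ is a loop of degree one, and mutating at $X_1=S^{(p)}_{p-1}$ (up to $\tau$-action) shifts the degrees on $\mathcal{B}_1,\mathcal{R}_1$ as prescribed by the special clause of rule (2).
\end{enumerate}

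Combining the three cases yields $\mathcal{Q}(\mu^{+}_{S}(\mathcal{X}))=\mathcal{Q}(\mu_{V}\widetilde{\mathcal{G}})$. The identity for $\mu^{-}_{S}$ follows by the same local analysis with the triangles reversed, or, more economically, from the observation that $\mu^{+}_{S}$ and $\mu^{-}_{S}$ are mutually inverse operations on SMCs while $\mu_{V}$ and $\mu^{-1}_{V}$ are mutually inverse on graded gentle one-cycle quivers (the class being closed under mutation by Lemma \ref{connectedness of graded gentle one-cycle quiver}), so the commutation on the $+$ side forces it on the $-$ side.

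The main obstacle will be the book-keeping in case (b) and the rank-one sub-case of (c): one has to track three separate colors (straight, curly, dotted), the color-swap operations $\mathfrak{X}^{\times},\mathfrak{X}^{s},\mathfrak{X}^{c}$, and the degree shifts on both the cycle edges and the arrows connecting $V$ to its subtree neighbours, all while correctly identifying which local sub-quiver of Lemma \ref{local mutations with more cycles} or \ref{local mutations with less cycles} applies; boundary configurations such as $\delta_i=1$ and the collision of a loop with a cycle edge when $r=1$ require separate verification that the degree-two loop obstruction $X_1^{\,\ar@(l,u)^-{2}\;}$ vanishes before and after mutation, which is ensured by $\End(S^{(p)}_{p-1})\cong\textbf{k}$ together with the vanishing computations in Lemma \ref{vanising homs}.
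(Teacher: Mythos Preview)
Your approach is essentially the paper's own: the paper states the proposition as ``a direct consequence of calculation'' using Lemmas \ref{general local mutations}, \ref{local mutations with more cycles}, and \ref{local mutations with less cycles}, and you have spelled out that calculation via the natural case split on the position of $V$ in $\widetilde{\mathcal{G}}$. One bookkeeping slip to fix: you have interchanged the roles of Lemmas \ref{local mutations with more cycles} and \ref{local mutations with less cycles} in your cases (b) and (c)---in Lemma \ref{local mutations with more cycles} the mutation object $S$ sits \emph{below} the cycle vertices $X_1,X_2$ (i.e.\ it is a tree vertex adjacent to the cycle, matching your case (b) and rule (1) of the definition), whereas in Lemma \ref{local mutations with less cycles} one explicitly has $S=X_1$ a cycle vertex (matching your case (c) and rule (2)); swapping these two references back makes the case analysis line up correctly.
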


We are now in a position to describe all Ext-quivers of SMCs in $\mathcal{D}^{b} (${\rm{\textbf{T}}}$_p)$.

\begin{thm}
The Ext-quivers of SMCs in $\mathcal{D}^{b}(${\rm{\textbf{T}}}$_p)$ are precisely the associated quivers of graded gentle one-cycle quivers \emph{(}of rank $r$\emph{)} with $p$ vertices  as in (\ref{graph of graded gentle one cycle trees}).
\end{thm}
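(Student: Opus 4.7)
The plan is to prove the two inclusions separately, using the propositions and lemmas already developed in this section as the main engines, and closing the gap via the connectedness results of Section 4 together with Proposition \ref{compatible with quiver mutation}.

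First I would handle the forward direction: every SMC $\mathcal{X}$ in $\mathcal{D}^{b}(\textbf{T}_p)$ has Ext-quiver of the required form. By Propositions \ref{heart forms} and \ref{collection of simples of sub-tube}, and up to shift, $\mathcal{X}$ falls into one of three regimes according to $\lvert\mathcal{S}\rvert$. If $\mathcal{S}=\emptyset$, then $\mathcal{X}=\{S_0,\dots,S_{p-1}\}$ and its Ext-quiver is the cyclic quiver $C_p$ of $p$ vertices with all arrows of degree $1$; this is the rank-$p$ graded gentle one-cycle quiver with empty subtrees $\mathcal{B}_i=\mathcal{R}_i=\emptyset$. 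If $1<\lvert\mathcal{S}\rvert<p-1$, Proposition \ref{Ext quiver of rank more than two} gives the statement directly (rank $r=p-\lvert\mathcal{S}\rvert$ cycle glued with graded gentle trees obtained from Lemma \ref{Ext quiver of two type A with pro inj}). If $\lvert\mathcal{S}\rvert=p-1$, Proposition \ref{Ext quiver of rank one} realizes the Ext-quiver as the rank-$1$ graded gentle one-cycle quiver (the loop with degree-$1$ self-extension on $S_{p-1}^{(p)}$, plus the two trees obtained from $F(\mathcal{U})$ via Lemma \ref{Ext quiver of F(U)}). The only case left is $\lvert\mathcal{S}\rvert=1$, where $\mathcal{X}$ decomposes as the union of a type $\mathbb{A}$ SMC sitting on the single gap and the $p-1$ simples of the tube $\mathcal{S}^{\bot_{\textbf{T}_p}}$; here the Ext-quiver assembly follows from Lemma \ref{Ext quiver of type A with pro inj} together with the Hom-vanishing argument of Lemma \ref{vanising homs}. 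In every regime the output is a graded gentle one-cycle quiver with exactly $p$ vertices.

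For the reverse direction, let $\widetilde{\mathcal{G}}$ be an arbitrary graded gentle one-cycle quiver with $p$ vertices. I would fix as seed the graded cyclic quiver $\widetilde{\mathcal{G}}_0=\mathcal{Q}(\mathcal{X}_0)$ with $\mathcal{X}_0=\{S_0,\dots,S_{p-1}\}$, which appears in our list by the forward direction. By Lemma \ref{connectedness of graded gentle one-cycle quiver} there is a finite sequence of quiver mutations $\mu_{V_1}^{\pm}, \dots, \mu_{V_N}^{\pm}$ on graded gentle one-cycle quivers carrying $\widetilde{\mathcal{G}}_0$ to $\widetilde{\mathcal{G}}$. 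I would lift this sequence step-by-step to SMC mutations using Proposition \ref{compatible with quiver mutation}: if $\mathcal{X}^{(k)}$ is a SMC with $\mathcal{Q}(\mathcal{X}^{(k)})=\mathcal{Q}(\widetilde{\mathcal{G}}^{(k)})$ and $V_{k+1}$ corresponds to the object $S^{(k+1)}\in \mathcal{X}^{(k)}$, then set $\mathcal{X}^{(k+1)}:=\mu^{\pm}_{S^{(k+1)}}(\mathcal{X}^{(k)})$, which is again a SMC by Lemma \ref{mutations of SMC}, and one has $\mathcal{Q}(\mathcal{X}^{(k+1)})=\mathcal{Q}(\mu_{V_{k+1}}^{\pm}\widetilde{\mathcal{G}}^{(k)})=\mathcal{Q}(\widetilde{\mathcal{G}}^{(k+1)})$. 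After $N$ steps we obtain a SMC $\mathcal{X}^{(N)}$ with $\mathcal{Q}(\mathcal{X}^{(N)})=\mathcal{Q}(\widetilde{\mathcal{G}})$, proving the reverse inclusion.

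The main obstacle, and the only point where genuine care is needed, is the alignment in the reverse direction: when we pass from a quiver mutation $\mu_{V_{k+1}}^{\pm}$ on $\widetilde{\mathcal{G}}^{(k)}$ to the SMC mutation at $S^{(k+1)}$, we must know (i) that $V_{k+1}$ indeed corresponds under $\mathcal{Q}(-)$ to an object of $\mathcal{X}^{(k)}$ (this is automatic since the vertex sets are identified), and (ii) that the three local moves covered by Lemmas \ref{general local mutations}, \ref{local mutations with more cycles}, and \ref{local mutations with less cycles} exhaust every configuration of $\widetilde{\mathcal{G}}^{(k)}$ near $V_{k+1}$. This exhaustiveness check follows from the rigidity of graded gentle one-cycle quivers as laid out in Definition \ref{generalised graded gentle trees of rank n}: near any vertex the incoming/outgoing arrows split into at most two alternating colors with at most one degree-$1$ arrow per color, precisely matching one of the patterns in those three lemmas. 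Once this combinatorial check is in place, the bijection $\mathcal{Q}:\{\text{SMCs in }\mathcal{D}^b(\textbf{T}_p)\}\twoheadrightarrow\{\widetilde{\mathcal{G}}\text{ with }p\text{ vertices}\}$ follows from (\ref{commutative for quiver mutations}) and the theorem is proved.
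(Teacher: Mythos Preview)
Your proposal is correct and follows essentially the same strategy as the paper's own proof: the forward direction via Propositions \ref{Ext quiver of rank more than  two} and \ref{Ext quiver of rank one} (your explicit handling of the boundary cases $\lvert\mathcal{S}\rvert=0$ and $\lvert\mathcal{S}\rvert=1$ is in fact more careful than the paper, which tacitly absorbs them), and the reverse direction by lifting the mutation-connectedness of Lemma \ref{connectedness of graded gentle one-cycle quiver} to SMCs via Proposition \ref{compatible with quiver mutation} and Lemma \ref{mutations of SMC}. One cosmetic slip: in your final line you write ``bijection'' but have only argued (and only need) that $\mathcal{Q}$ is a surjection onto the set of graded gentle one-cycle quivers with $p$ vertices.
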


\begin{proof}
By Proposition \ref{heart forms}, each SMC is a shifts of
$\mathcal{X}_{0}=\{S_{0}, S_{1}, \cdots, S_{p-1}\}$ or $\mathcal{X}=\mathop{\cup}\limits_{ i=1 }^{n}\mathcal{X}_{i}$ as in (\ref{collection of X}).
It follows that the Ext-quiver of any SMC in $\mathcal{D}^{b}(${\rm{\textbf{T}}}$_p)$ is  the associated quiver of graded gentle one-cycle quiver  with $p$ vertices
by Propositions \ref{Ext quiver of rank more than  two} and  \ref{Ext quiver of rank  one}.
Conversely, the graded cyclic quiver of $p$ vertices corresponding to SMC $\mathcal{X}_{0}[k]$ and
the set of  graded gentle one-cycle quivers with $p$ vertices is connected by Lemma \ref{connectedness of graded gentle one-cycle quiver}.
Then we get  the associated quiver of any graded gentle one-cycle quiver with $p$ vertices is the Ext-quiver of some SMC obtained via left/right mutations from $\mathcal{X}_{0}[k]$ by (\ref{commutative for quiver mutations}), (\ref{mutated equations}) and Lemma \ref{mutations of SMC}.
We are done.
\end{proof}

We end this section by providing a concrete example of Ext-quivers of all SMCs  in $\mathcal{D}^{b} ({\textbf{T}_3})$.

\begin{exam}
Recall that, up to $\tau$-actions of $\textbf{T}_3$ and shift actions of $\mathcal{D}^{b} (${\rm{\textbf{T}}}$_3)$, the hearts $\mathcal{H}$  in $\mathcal{D}^{b} ({\textbf{T}_3})$ are classified in Example \ref{hearts for T3}. Then the Ext-quivers of all SMCs are the associated quivers of the following graded gentle one-cycle quivers of rank $r\leq3$, where $k\geq1,\; \delta_{1}\geq0,  \delta_{2}<0,\;  \delta_{3}\geq k,\; 0\leq\delta_{4}<k$.

\xymatrix@C=3.4em@R=6.35ex@M=0.1pt@!0{&&&&&&&&&
\quad S^{}_{1}[\delta_{1}+1-k]&&&\quad S^{}_{1}[\delta_{2}+1-k]&&&&&\\
&S_{1}\ar@{->}[ld]_-{1}&&
\quad\quad S^{}_{1}[\delta_{1}+1]\ar@{-->}[d]^-{\delta_{1}+1}&&&
S^{}_{2}[\delta_{2}]&&&
S^{(2)}_{1}[\delta_{1}+1]\ar@{->}[d]^-{\delta_{1}+1}\ar@{-->}[u]_-{k}&&&
S^{}_{2}[\delta_{2}]\ar@{-->}[u]_-{k}&&&&&&&\\
S_{0}\ar@{~>}[rr]_-{1}&&S_{2}\ar@{-->}[lu]_-{1}&
S^{(2)}_{2}\ar@{-->}[r]^-{1}&S^{}_{0}\ar@<1.2ex>[l]^-{1}&&
S^{(2)}_{2}\ar@{-->}[r]^-{1}\ar@{->}[u]_-{\delta_{2}}&
S^{}_{0}\ar@<1.2ex>[l]^-{1}&&
S^{(3)}_{2}\ar@(ur,dr)[]^-{1}&&&
S^{(3)}_{2}\ar@(ur,dr)[]^-{1}\ar@{->}[u]_-{-\delta_{2}}&
&&&&&&&&\\ }

\xymatrix@C=3.6em@R=6.35ex@M=0.01pt@!0{
&&&&&&&&&&&&&&&&&&&&&&&&&\\
S^{}_{1}[\delta_{1}+k]\ar@{-->}[d]^-{k}&&&
S^{}_{1}[\delta_{2}+k]\ar@{-->}[d]^-{k}&&&
S^{}_{0}[\delta_{3}+1]\ar@{->}[d]^-{k}&&&
S^{}_{0}[\delta_{4}+1]\ar@/_1.4pc/ @{{}{x}{}}[dd]\ar@{->}[d]^-{\delta_{4}+1}&&
S^{}_{2}[\delta_{2}-k]&&&&&&&&&&&&&\\
S^{}_{0}[\delta_{1}+1]\ar@{->}[d]^-{\delta_{1}+1}&&&
S^{(2)}_{2}[\delta_{2}]&&&
S^{(2)}_{1}[\delta_{3}+1-k]\ar@{->}[d]^-{\delta_{3}+1-k}&&&
S^{(3)}_{2}\ar@(ur,dr)^-{1}\ar@{->}[d]^-{k-\delta_{4}}&&
S^{(2)}_{2}[\delta_{2}]\ar@{->}[u]_-{k}&&&&&&&&&&&&\\
S^{(3)}_{2}\ar@(ur,dr)^-{1}&&&
S^{(3)}_{2}\ar@(ur,dr)^-{1}\ar@{->}[u]_-{-\delta_{2}}&&&
S^{(3)}_{2}\ar@(ur,dr)[]^-{1}&&&S^{}_{2}[\delta_{4}-k]&&
S^{(3)}_{2}\ar@(ur,rd)[]^-{1}\ar@{->}[u]_-{-\delta_{2}}&&\\ }
\end{exam}

\bigskip

{\bf Acknowledgments.}
The author would like to thank Prof. Yanan Lin and Prof. Shiquan Ruan for  helpful suggestions and discussions.


\begin{thebibliography}{00}

\bibitem{simp}
S.~Al-Nofayee.
\newblock Simple objects in the heart of a t-structure.
\newblock {\em \rm{J. Pure Appl. Algebra 213(2009), no. 1, 54-59.}}

\bibitem{ahge}
I.~Assem and D.~Happel.
\newblock Generalized tilted algebras of type $\textit{A}_{n}$.
\newblock {\em \rm{ Comm. Algebra 9 (1981), no. 20, 2101-2125.}}

\bibitem{chow}
T.~Bachmann, H.~Kong, G.~Wang, and Z.~Xu.
\newblock The chow t-structure on the $\infty$-category of motivic spectra.
\newblock {\em \rm{Ann. of Math. (2)195(2022), no. 2, 707-773.}}

\bibitem{bbd}
A.A. Be\u{\i}linson, J.~Bernstein, and P.~Deligne.
\newblock \rm{Faisceaux pervers}.
\newblock {\em \rm{Analysis and topology on singular spaces, I (Luminy, 1981),
  5-171. Ast\'{e}risque, 100 Soci$\rm{\acute{e}}$t$\rm{\acute{e}}$
  Math$\rm{\acute{e}}$matique de France, Paris, 1982}}.

\bibitem{bris}
T.~Bridgeland.
\newblock Stability conditions on triangulated categories.
\newblock {\em \rm{Ann. of Math. (2)166(2007), no .2, 317-345.}}

\bibitem{brus}
T.~Br$\rm{\ddot{u}}$stle and D.~Yang.
\newblock Ordered exchange graphs.
\newblock {\em \rm{Advances in representation theory of algebras, 135-193. EMS
  Ser. Congr. Rep. European Mathematical Society (EMS), Z$\rm{\ddot{u}}$rich,
  2013.}}

\bibitem{clus}
A.~Buan, R.~Marsh, M.~Reineke, and I.~Reiten.
\newblock Tilting theory and cluster combinatorics.
\newblock {\em \rm{Adv. Math. 204(2006), no. 2, 572-618.}}

\bibitem{geom}
W.~Chang, Y.~Qiu, and X.~Zhang.
\newblock Geometric model for module categories of {D}ynkin quivers via hearts
  of total stability conditions.
\newblock {\em \rm{J. Algebra 638(2024), 57-89.}}

\bibitem{stab}
M.~Chen, Y.~Lin, and S.~Ruan.
\newblock Stability approach to torsion pairs on abelian categories.
\newblock {\em \rm{J. Algebra 636(2023), 560-602.}}

\bibitem{hall}
B.~Deng and J.~Xiao.
\newblock Hall algebras of cyclic quivers and q-deformed {F}ock spaces.
\newblock {\em \rm{J. Algebra 480(2017), 168-208.}}

\bibitem{clusc}
C.~Fu, S.~Geng, and P.~Liu.
\newblock Cluster algebras arising from cluster tubes {I}: integer vectors.
\newblock {\em \rm{Math. Z. 297(2021), no. 3-4, 1793-1824.}}

\bibitem{glri}
W.~Geigle and H.~Lenzing.
\newblock Perpendicular categories with applications to representations and
  sheaves.
\newblock {\em \rm{J. Algebra 144 (1991), no. 2, 273-343.}}

\bibitem{twis}
F.~Genovese, W.~Lowen, and M.~Van~den Bergh.
\newblock t-{S}tructures and twisted complexes on derived injectives.
\newblock {\em \rm{Adv. Math. 387(2021), Paper No. 107826, 70 pp.}}

\bibitem{kim}
J.~Kim.
\newblock Categorical entropy, (co-)t-structures and {ST}-triples.
\newblock {\em \rm{Math. Z.304(2023), no. 3, Paper No. 37, 30 pp.}}

\bibitem{kingqi}
A.~King and Y.~Qiu.
\newblock Cluster exchange groupoids and framed quadratic differentials.
\newblock {\em \rm{Invent. Math. 220(2020), no. 2, 479-523.}}

\bibitem{kingq}
A.~King and Y.~Qiu.
\newblock Exchange graphs and {E}xt quivers.
\newblock {\em \rm{Adv. Math. 285(2015), 1106-1154.}}

\bibitem{kysi}
S.~Koenig and D.~Yang.
\newblock Silting objects, simple-minded collections, $t$-structures and
  co-$t$-structures for finite-dimensional algebras.
\newblock {\em \rm{Doc. Math. 19 (2014), 403-438.}}

\bibitem{gkm}
M.~Lanini and A.~P$\rm{\ddot{u}}$tz.
\newblock {GKM}-theory for torus actions on cyclic quiver {G}rassmannians.
\newblock {\em \rm{Algebra Number Theory 17(2023), no. 12, 2055-2096.}}

\bibitem{lenz}
H.~Lenzing.
\newblock Hereditary categories. in handbook of tilting theory.
\newblock {\em \rm{volume 332 of London Math. Soc. Lecture Note Ser., pages
  105-146. Cambridge Univ. Press, Cambridge, 2007.}}

\bibitem{para}
D.~Orr and M.~Shimozono.
\newblock On cyclic quiver parabolic {K}ostka-{S}hoji polynomials.
\newblock {\em \rm{J. Combin. Theory Ser. A 190(2022), Paper No. 105634, 27
  pp.}}

\bibitem{noet}
C.~Parra and M.~Saor\'{i}n.
\newblock Hearts of t-structures in the derived category of a commutative
  noetherian ring.
\newblock {\em \rm{Trans. Amer. Math. Soc. 369(2017), no. 11, 7789-7827.}}

\bibitem{quan}
F.~Qin.
\newblock Quantum groups via cyclic quiver varieties {I}.
\newblock {\em \rm{Compos. Math. 152(2016), no. 2, 299-326.}}

\bibitem{qsta}
Y.~Qiu.
\newblock Stability conditions and quantum dilogarithm identities for
  $\rm{D}$ynkin quivers.
\newblock {\em \rm{Adv. Math. 269 (2015), 220-264.}}

\bibitem{qext}
Y.~Qiu.
\newblock Ext-quivers of hearts of $\emph{A}$-type and the orientations of
  associahedrons.
\newblock {\em \rm{J.Algebra}}, 393:60--70, 2013.

\bibitem{ring}
C.~M. Ringel.
\newblock Tame algebras and integral quadratic forms.
\newblock {\em \rm{Lecture Notes in Math., 1099 Springer-Verlag, Berlin, 1984,
  xiii+376 pp.}}

\bibitem{rw}
S.~Ruan and X.~Wang.
\newblock t-{S}tabilities for a weighted projective line.
\newblock {\em \rm{Math. Z. 297 (2021), no. 3-4, 1119-1160.}}

\bibitem{sast}
M.~Saor\'{i}n and J.~$\rm{\check{S}}$t$\acute{}$ov\'{i}$\rm{\check{c}}$ek.
\newblock t-{S}tructures with {G}rothendieck hearts via functor categories.
\newblock {\em \rm{Selecta Math. (N.S.)29(2023), no. 5, Paper No. 77.}}

\bibitem{grot}
M.~Saor\'{i}n, J.~$\rm{\check{S}}$t$\acute{}$ov\'{i}$\rm{\check{c}}$ek, and
  S.~Virili.
\newblock t-{S}tructures on stable derivators and {G}rothendieck hearts.
\newblock {\em \rm{Adv. Math.429(2023), Paper No. 109139, 70 pp.}}

\bibitem{sims}
D.~Simson and A.~Skowro$\rm{\acute{n}}$ski.
\newblock Elements of the representation theory of associative algebras. vol.
  2.
\newblock {\em \rm{volume~71 of London Mathematical Society Student Texts.
  Cambridge University Press, Cambridge, 2007.}}

\bibitem{tstr}
D.~Stanley and A.~van Roosmalen.
\newblock t-{S}tructures on hereditary categories.
\newblock {\em \rm{Math. Z. 293(2019), no. 1-2, 731-766.}}

\bibitem{boun}
C.~Sun.
\newblock Bounded t-structures on the bounded derived category of coherent
  sheaves over a weighted projective line.
\newblock {\em \rm{Algebr. Represent. Theory 23 (2020), no. 6, 2167-2235}}.

\bibitem{wool}
J.~Woolf.
\newblock Stability conditions, torsion theories and tilting.
\newblock {\em \rm{J. Lond. Math. Soc. (2) 82 (2010), no. 3, 663-682.}}

\end{thebibliography}
\end{document}